\documentclass[reqno]{amsart}
\usepackage{color,epsfig}
\usepackage[applemac]{inputenc}
\usepackage[T1]{fontenc}
\usepackage{amsmath,amssymb,amsthm,mathdots}
\usepackage[english]{babel}
\usepackage{graphicx}

\newtheorem{theorem}{Theorem}[section]
\newtheorem{lemma}[theorem]{Lemma}
\newtheorem{corollary}[theorem]{Corollary}
\newtheorem{proposition}[theorem]{Proposition}
\newtheorem*{propbis}{Proposition \ref{non-dense} bis}

\theoremstyle{definition}
\newtheorem{definition}[theorem]{Definition}
\newtheorem*{nota}{Notation}
\newtheorem{ques}[theorem]{Question}

\theoremstyle{remark}
\newtheorem{rema}[theorem]{Remark}
\newtheorem{example}[theorem]{Example}

\DeclareMathOperator{\id}{id}
\DeclareMathOperator{\lo}{{\rm length}}
\DeclareMathOperator{\NS}{{\rm N}^1}
\DeclareMathOperator{\SL}{\rm{SL}}
\DeclareMathOperator{\PSL}{\rm{PSL}}
\DeclareMathOperator{\GL}{\rm{GL}}
\DeclareMathOperator{\vol}{\rm vol}
\DeclareMathOperator{\mvol}{\rm mvol}
\DeclareMathOperator{\rg}{\rm rank}
\DeclareMathOperator{\Aut}{\rm Aut}
\DeclareMathOperator{\Diff}{\rm Diff}
\DeclareMathOperator{\Homeo}{\rm Homeo}
\DeclareMathOperator{\Isom}{\rm O}
\DeclareMathOperator{\SO}{\rm SO}
\DeclareMathOperator{\h}{{\rm h_{\it top}}}
\DeclareMathOperator{\liap}{\chi_{\it top}}
\DeclareMathOperator{\M}{\rm M}
\DeclareMathOperator{\Amp}{\rm Amp}
\DeclareMathOperator{\Nef}{\rm Nef}
\DeclareMathOperator{\Pos}{\rm Pos}

\renewcommand\epsilon{\varepsilon}
\renewcommand\Im{\mathfrak{Im}}
\renewcommand\Re{\mathfrak{Re}}
\newcommand\Z{\mathbf{Z}}
\newcommand\R{\mathbf{R}}
\newcommand\C{\mathbf{C}}
\newcommand\Q{\mathbf{Q}}
\newcommand\N{\mathbf{N}}
\renewcommand\P{\mathbf{P}}
\newcommand\K{\mathbf{K}}
\renewcommand\b{\overline}
\renewcommand{\O}{\mathcal O}


\begin{document}

\title[Real versus complex volumes on real algebraic surfaces]{Real versus complex volumes\\on real algebraic surfaces}

\author{Arnaud Moncet}
\address{Arnaud Moncet\\
Universit\'e de Rennes 1\\
IRMAR\\
Campus de Beaulieu\\
b\^at. 22-23\\
263 avenue du G\'en\'eral Leclerc\\
CS 74205\\
35042 Rennes cedex
}
\email{arnaud.moncet@univ-rennes1.fr}
\date{July 2011}

\maketitle

\begin{abstract}
Let $X$ be a real algebraic surface. The comparison between the volume of $D(\R)$ and $D(\C)$ for ample divisors $D$ brings us to define the concordance $\alpha(X)$, which is a number between $0$ and $1$. This number equals $1$ when the Picard number $\rho(X_\R)$ is $1$, and for some surfaces with a ``quite simple'' nef cone, e.g. Del Pezzo surfaces. For abelian surfaces, $\alpha(X)$ is $1/2$ or $1$, depending on the existence or not of positive entropy automorphisms on $X$. In the general case, the existence of such an automorphism gives an upper bound for~$\alpha(X)$, namely the ratio of entropies $\h(f{|X(\R)})/\h(f{|X(\C)})$. Moreover $\alpha(X)$ is equal to this ratio when the Picard number is $2$. An interesting consequence of the inequality is the nondensity of $\Aut(X_\R)$ in $\Diff(X(\R))$ as soon as $\alpha(X)>0$. Finally we show, thanks to this upper bound, that there exist K3 surfaces with arbitrary small concordance,  considering a deformation of a singular surface of tridegree $(2,2,2)$ in $\P^1\times\P^1\times\P^1$.
\end{abstract}



\section{Introduction}

Let $X$ be a real projective variety with a fixed Riemannian metric. The goal of this paper is  to compare volumes of real subvarieties $Y(\R)$ and of their complexifications $Y(\C)$. As will be seen, this is closely related to the question of comparing real and complex dynamics of automorphisms of $X_\R$.


\subsection{Projective space}

Consider the projective space $X=\P^d_\R$, equipped with the Fubini--Study metric. Let $Y$ be a real subvariety of $\P^d_\R$ of dimension $k$. By Wirtinger's formula (see \cite[p. 31]{griffiths-harris}), the volume of $Y(\C)$ satisfies 
\begin{equation}
\vol_\C(Y)=\deg(Y)\vol_\C(\P^k).
\end{equation}
For the volume of $Y(\R)$, the Cauchy--Crofton formula enables us to show that
\begin{equation}
\vol_\R(Y)\leq\deg(Y)\vol_\R(\P^k)
\end{equation}
and to characterize the case of equality (see Appendix \ref{annexe-crofton}). This gives the following proposition.

\begin{proposition}\label{prop-proj}
Let $Y$ be a real $k$-dimensional algebraic subvariety of the projective space $\P^d_\R$. With respect to the Fubini--Study metric, we have
\begin{equation}\label{ineg-proj}
\frac{\vol_\R(Y)}{\vol_\R(\P^k)}\leq \frac{\vol_\C(Y)}{\vol_\C(\P^k)}.
\end{equation}
Furthermore, equality is achieved if and only if $Y$ is the union of $\deg(Y)$ real projective subspaces.
\end{proposition}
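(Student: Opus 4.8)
The plan is to combine two classical integral–geometric facts, one complex and one real, after which the inequality is immediate and all the genuine work concentrates in the equality case.

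First, for the right-hand side of \eqref{ineg-proj}: Wirtinger's formula quoted above gives $\vol_\C(Y)=\deg(Y)\,\vol_\C(\P^k)$ exactly, so $\vol_\C(Y)/\vol_\C(\P^k)=\deg(Y)$ with no loss. The proposition therefore reduces to showing $\vol_\R(Y)\leq\deg(Y)\,\vol_\R(\P^k)$, with equality precisely when $Y$ is a union of $\deg(Y)$ real projective $k$-planes. For the left-hand side I would invoke the Cauchy–Crofton formula on $\P^d_\R$ with the Fubini–Study metric: letting $G$ be the Grassmannian of projective $(d-k)$-planes $L\subset\P^d_\R$ with its invariant measure, Crofton's formula produces a constant $c_{d,k}>0$, depending only on $d$, $k$ and the metric normalization, with $\vol_\R(Z)=c_{d,k}\int_G\#\bigl(Z(\R)\cap L\bigr)\,dL$ for every real $k$-dimensional subvariety $Z$ (the measure-zero set of degenerate planes is handled in Appendix \ref{annexe-crofton}). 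Taking $Z=\P^k_\R$, a generic $L$ meets it transversally in one point, so $\vol_\R(\P^k)=c_{d,k}\,\vol(G)$, fixing the constant.

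Now the inequality is a pointwise comparison of counting functions: for a generic real $(d-k)$-plane $L$, the complex intersection $Y(\C)\cap L$ has exactly $\deg(Y)$ points, and $Y(\R)\cap L$ is a subset of it, so $\#\bigl(Y(\R)\cap L\bigr)\leq\deg(Y)$ for almost every $L$. Integrating over $G$ yields $\vol_\R(Y)\leq c_{d,k}\,\deg(Y)\,\vol(G)=\deg(Y)\,\vol_\R(\P^k)$, which is \eqref{ineg-proj}. The converse direction of the equality statement is trivial: a union of $\deg(Y)$ real projective $k$-planes has total real volume $\deg(Y)\,\vol_\R(\P^k)$ and degree $\deg(Y)$, so equality holds.

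The remaining, and hardest, point is the forward implication of the equality case: if equality holds in \eqref{ineg-proj}, then $\#\bigl(Y(\R)\cap L\bigr)=\deg(Y)$ for almost every real $(d-k)$-plane $L$, i.e.\ a generic real $(d-k)$-plane meets $Y(\C)$ only in real points, and this must be upgraded to the structural conclusion. Here I would first discard the components of $Y$ of dimension $<k$ (they contribute nothing to $\vol_\R$), keep the top-dimensional part, then intersect with a generic real linear subspace of dimension $d-k+1$ to reduce to a real curve $C$ of degree $e=\deg(Y)$ in some $\P^m_\R$ whose generic real hyperplane carries $e$ real points, and finally project from a generic real center to a real plane curve of degree $e$ meeting every generic real line in $e$ real points. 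The classical fact that such a "maximally real" plane curve is forced to be a union of $e$ real lines — no irreducible component of degree $\geq 2$ survives, by an elementary topological argument on the projection to a real pencil of lines — then shows, back in $\P^d_\R$, that every positive-dimensional component of $Y$ is a real projective $k$-plane and that there are exactly $\deg(Y)$ of them. All of this geometric bookkeeping is where the difficulty lies, and it is precisely what is deferred to Appendix \ref{annexe-crofton}.
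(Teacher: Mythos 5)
Your treatment of the inequality itself matches the paper exactly: Wirtinger for the complex side, Cauchy--Crofton for the real side, and the pointwise bound $\#\bigl(Y(\R)\cap L\bigr)\leq\deg(Y)$. For the equality case, however, you take a genuinely different route from the paper and you leave the crux unproven. The paper (Lemma~\ref{deg-2}) cuts $Y$ by a generic real $(d-k+1)$-plane $L$ (Bertini) to get a geometrically irreducible curve $C\subset L$ of the same degree, and then exhibits \emph{directly} a real hyperplane of $L$ meeting $C$ in at most $\deg(Y)-2$ real points: pick two distinct complex conjugate points $P,\bar P$ on $C(\C)$ and take a real hyperplane of $L$ through them. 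Since being ``bad'' is an open condition on the Grassmannian, this costs a positive-measure chunk in Crofton and forces strict inequality. Your sketch instead projects further to a real plane curve and invokes an unproved ``classical fact'' that a degree-$e$ real plane curve meeting a.e.\ real line in $e$ real points must be a union of $e$ real lines, via an unspecified ``elementary topological argument.'' That fact, once you try to prove it, essentially reduces to the same conjugate-pair trick (a real line through $p$ and $\bar p$ on a component of degree $\geq 2$ already kills equality), so the extra projection to $\P^2$ buys nothing, while the topological route (tracking jumps in the fiber count of the projection at real tangent lines from a generic center, using that a degree-$\geq 2$ component must have real tangent lines from a generic point) is actually more delicate than ``elementary'' suggests.

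There is also a small but real hole in your stated criterion. You assert that ``no irreducible component of degree $\geq 2$ survives,'' but this lets through the union of two distinct complex conjugate \emph{lines} (each of degree $1$): this is a real curve of degree $2$ whose real locus is at most a point, so it badly violates equality yet is not excluded by your degree-$\geq 2$ condition. The paper disposes of this case explicitly: if $Y$ is irreducible over $\R$ but not geometrically irreducible, then $Y=Z\cup\sigma(Z)$ with $Z$ non-real, hence $\vol_\R(Y)=0<\deg(Y)\vol_\R(\P^k)$. You need to add this dichotomy (geometrically irreducible vs.\ not) before applying your ``no degree-$\geq 2$ component'' argument, which should then be phrased over $\C$ and combined with a conjugate-point argument rather than left as a cited fact.
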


As a consequence, if $\mathcal{V}_k(\delta)$ denotes the set of real subvarieties of $\P^d_\R$ of dimension~$k$ and degree $\delta$, then for any $Y_0\in\mathcal{V}_k(\delta)$ we have
\begin{equation}\label{conc-esp-proj}
\max_{Y\in\mathcal{V}_k(\delta)}\vol_\R(Y)=\delta\vol_\R(\P^k)=\frac{\vol_\R(\P^k)}{\vol_\C(\P^k)}\vol_\C(Y_0).
\end{equation}


\subsection{General case}

Now $X$ is an arbitrary $d$-dimensional real algebraic variety. We assume that it is \emph{projective}, \emph{smooth}, \emph{irreducible} and that the real locus $X(\R)$ is \emph{not empty}. Let $Y$ be a $k$-dimensional real algebraic subvariety of $X$. Denote the volume of $Y(\R)$ by $\vol_\R(Y)$, and that of $Y(\C)$ by $\vol_\C(Y)$, both with respect to a fixed Riemannian metric on $X(\C)$.

\begin{nota}
Let $\mathcal{V}(Y)$ be the family of real algebraic subvarieties $Z$ such that $Y(\C)$ and $Z(\C)$ have the same homology class in ${\rm H}_{2k}(X(\C);\Z)$. Then for $\K=\R$ or $\C$ we set
\begin{equation}
\mvol_\K(Y)=\max_{Z\in\mathcal{V}(Y)}\vol_\K(Z).
\end{equation}
\end{nota}

When the Riemannian metric comes from the Fubini--Study metric on some projective space $\P^n$ in which $X$ is embedded, we get Inequality (\ref{ineg-proj}). Since two Riemannian metrics are comparable (by compactness of $X(\C)$), we obtain the following proposition.

\begin{proposition}\label{mvolr-leq}
Let $X$ be a real algebraic variety, equipped with an arbitrary Riemannian metric. For any $k\in\N^*$ there exists a constant $C_k>0$ depending on the choice of the metric, such that
\begin{equation}\label{ineg-gen}
\mvol_\R(Y)\leq C_k\mvol_\C(Y)
\end{equation}
for all $k$-dimensional subvarieties $Y$ of $X$.
\end{proposition}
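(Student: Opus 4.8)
The plan is to deduce the inequality from the Fubini--Study case, Proposition~\ref{prop-proj}, using only the compactness of $X(\C)$; the constant $C_k$ will come out of the comparability of any two Riemannian metrics on a compact manifold.

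First I would fix a projective embedding $X_\R\hookrightarrow\P^n_\R$, which exists since $X$ is projective, and let $g_0$ denote the Riemannian metric induced on $X(\C)$ by the Fubini--Study metric of $\P^n(\C)$. For this particular metric the statement is immediate: any member $Z$ of the family $\mathcal V(Y)$ is, in particular, a $k$-dimensional real algebraic subvariety of $\P^n_\R$, so Proposition~\ref{prop-proj} applied to $Z$ gives
\[
\vol^{g_0}_\R(Z)\ \le\ \frac{\vol_\R(\P^k)}{\vol_\C(\P^k)}\,\vol^{g_0}_\C(Z),
\]
and taking the maximum over $Z$ yields (\ref{ineg-gen}) for $g_0$ with $C_k=\vol_\R(\P^k)/\vol_\C(\P^k)$. (One could note in passing that all members of $\mathcal V(Y)$ have the same degree, their complex loci being homologous, so by Wirtinger's formula $\vol^{g_0}_\C$ is actually constant on $\mathcal V(Y)$; but this is not needed.)

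Next, given an arbitrary Riemannian metric $g$ on $X(\C)$, I would invoke compactness of $X(\C)$ to obtain a constant $\lambda\ge 1$ with $\lambda^{-1}g_0\le g\le\lambda g_0$ as quadratic forms on the tangent spaces, at every point. This forces the $k$-dimensional Riemannian volume of any real $k$-dimensional subvariety $W\subset X(\C)$ to satisfy $\lambda^{-k/2}\vol_{g_0}(W)\le\vol_g(W)\le\lambda^{k/2}\vol_{g_0}(W)$ — the singular locus of $W$ being lower-dimensional, hence of zero $k$-dimensional measure. Combining this (used for both $\R$- and $\C$-volumes) with the displayed inequality, one would get, for each $Z\in\mathcal V(Y)$,
\[
\vol^g_\R(Z)\ \le\ \lambda^{k/2}\vol^{g_0}_\R(Z)\ \le\ \lambda^{k/2}\frac{\vol_\R(\P^k)}{\vol_\C(\P^k)}\vol^{g_0}_\C(Z)\ \le\ \lambda^{k}\frac{\vol_\R(\P^k)}{\vol_\C(\P^k)}\vol^g_\C(Z)\ \le\ \lambda^{k}\frac{\vol_\R(\P^k)}{\vol_\C(\P^k)}\,\mvol_\C(Y),
\]
and taking the maximum over $Z\in\mathcal V(Y)$ would give (\ref{ineg-gen}) with $C_k=\lambda^{k}\,\vol_\R(\P^k)/\vol_\C(\P^k)$.

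The only slightly delicate point will be the behaviour of $k$-dimensional Riemannian volume under passage to a comparable metric; but this is routine once one recalls that it coincides with the $k$-dimensional Hausdorff measure of $X(\C)$, which is monotone in the distance function with exactly the scaling used above, and that singular loci of subvarieties are negligible for it. I would emphasize that no compactness or degree bound on the family $\mathcal V(Y)$ is required: the constant $\lambda$ depends only on $X$ and on the chosen metric, so the estimate is obtained for each $Z$ individually and only afterwards passed to the maximum.
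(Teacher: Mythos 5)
Your proof takes essentially the same route as the paper's two-sentence argument: reduce to the Fubini--Study case via Proposition~\ref{prop-proj}, then pass to an arbitrary metric by the comparability of two Riemannian metrics on the compact manifold $X(\C)$. The only slip is bookkeeping in the constant: $Z(\C)$ has real dimension $2k$, so $\vol^{g_0}_\C(Z)\le\lambda^{k}\vol^g_\C(Z)$ (not $\lambda^{k/2}$), and the final constant should be $\lambda^{3k/2}\,\vol_\R(\P^k)/\vol_\C(\P^k)$ rather than $\lambda^k$ times that ratio — immaterial here since the proposition asserts only the existence of some $C_k$.
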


Now we would like to know for which nonnegative exponents $\alpha$ we can write inequalities such as $\mvol_\R(Y)\geq C_k\mvol_\C(Y)^\alpha$, with $C_k$ independent of $Y$. We restrict ourselves to codimension $1$ subvarieties, that is, effective divisors. The notion of homology class in ${\rm H}_{2d-2}(X(\C);\Z)$ is dual to that of (first) Chern class in~${\rm H}^2(X(\C);\Z)$, which is preferred in what follows.

\begin{definition}\label{defi-concordance}
Let $\mathcal{A}(X)$ be the set of nonnegative exponents $\alpha$ for which there exist $C>0$ and $q\in\N^*$ such that
\begin{equation}\label{eq-alpha}
\mvol_\R(D)\geq C\mvol_\C(D)^\alpha
\end{equation}
for all real ample divisors $D$ whose Chern classes are $q$-divisible. The upper bound of $\mathcal{A}(X)$ is the \emph{concordance} of $X$, and is denoted by $\alpha(X)$. We say the concordance is \emph{achieved} when $\alpha(X)$ is contained in $\mathcal{A}(X)$.
\end{definition}

The set $\mathcal{A}(X)$, and thus the concordance $\alpha(X)$, only depend on $X$, and not on the choice of a particular metric. All metrics will be K\"ahler metrics, so that the number $\vol_\C(D)$ only depends on the Chern class of $D$; thus we write $\vol_\C(D)$ instead of $\mvol_\C(D)$.

As will be seen in Section \ref{section-concordance}, the concordance $\alpha(X)$ can only take values between~$0$ and $1$, and the set $\mathcal{A}(X)$ is an interval of the form $[0,\alpha(X)]$ or $[0,\alpha(X))$, whether the concordance is achieved or not.


\subsection{Examples}

Equation (\ref{conc-esp-proj}) implies that the concordance of the projective space is $1$. More generally we prove in Section \ref{section-alpha=1} that $\alpha(X)=1$ as soon as the closed convex cone $\Nef(X_\R)$ of real nef $\R$-divisors is generated by finitely many divisors~$D_j$ with $\mvol_\R(D_j)>0$. This is the case when the real Picard number $\rho(X_\R)$ is $1$. As a special case, the concordance of a curve is always $1$. Thus, nontrivial cases (those with $\alpha(X)<1$) can only occur when both the dimension and the Picard number are at least $2$.

In this paper, we focus on the case of surfaces, which already include many interesting examples. Among them, tori are the simplest surfaces for which the concordance is not always $1$ (cf \textsection \ref{tores}):

\begin{theorem}\label{thm-intro-ab}
Let $X$ be a real abelian surface. The real Picard number $\rho(X_\R)$ is equal to $1$, $2$ or $3$, and we have the following values for concordance:
\begin{itemize}
\item[(1)] If $\rho(X_\R)=1$, then $\alpha(X)=1$.
\item[(2)] If $\rho(X_\R)=2$, then $\alpha(X)=1$ or $1/2$, depending on the existence or not of real elliptic fibrations on $X$.
\item[(3)] If $\rho(X_\R)=3$, then $\alpha(X)=1/2$.
\end{itemize}
In all cases the concordance is achieved.
\end{theorem}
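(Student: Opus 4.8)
The plan is to analyze the structure of the nef cone of a real abelian surface according to the real Picard number, and in each case to identify either a finite generating set of nef classes with positive real volume (forcing $\alpha(X)=1$ by the criterion of Section~\ref{section-alpha=1}) or a positive-entropy automorphism whose entropy ratio is $1/2$ (forcing $\alpha(X)\le 1/2$), then matching this with a lower bound $\alpha(X)\ge 1/2$. First I would set up the standard description: for $X=A=V/\Lambda$ an abelian surface, $\NS(X_\C)\otimes\R$ carries the intersection form, which has signature $(1,\rho-1)$, and the real nef cone is one component of the positive cone (the set of classes of positive self-intersection), intersected with $\NS(X_\R)\otimes\R$; more precisely $\Nef(X_\R)$ is cut out by the Galois-invariant part. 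The value of $\rho(X_\R)$ (equivalently the rank of the Galois-invariant Néron--Severi group) is $1,2,3$ — the bound $\rho(X_\R)\le\rho(X_\C)\le 4$ together with a parity/real-structure argument explains why $4$ does not occur for the \emph{real} Picard number, and I would cite or reprove this.

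For case (1), $\rho(X_\R)=1$: the cited result from Section~\ref{section-alpha=1} applies directly, since $\Nef(X_\R)$ is a ray generated by a single ample class $D$, and $\mvol_\R(D)>0$ because an ample divisor on a real abelian surface with nonempty real locus has nonempty (indeed positive-volume) real points after moving within its linear system — I would spell out this positivity, probably using that a sufficiently divisible ample class is represented by a real curve meeting $X(\R)$, or via a theta-divisor / translation argument. Case (3), $\rho(X_\R)=3$: here the invariant intersection lattice has signature $(1,2)$, its isometry group is infinite and contains hyperbolic elements, and by the Torelli-type theorem for abelian surfaces (automorphisms fixing a polarization, or more precisely the realization of lattice isometries preserving the Hodge/real structure by automorphisms — I would invoke the relevant statement for tori) there is a real automorphism $f$ of positive entropy. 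Its complex entropy is $\log\lambda$ where $\lambda$ is the spectral radius on $\NS$, and a linear automorphism of a torus acts on $X(\R)=\R^4/\Lambda_\R$... the key point is that for abelian surfaces the action on $H^{1,1}$ is the symmetric square of the action on $H^1$, so the real dynamical degree is the square root of the complex one, giving $\h(f|X(\R))/\h(f|X(\C))=1/2$; then Theorem on the entropy upper bound gives $\alpha(X)\le 1/2$, and the construction of explicit families of real divisors $D_n$ with $\vol_\C(D_n)$ growing and $\vol_\R(D_n)\gtrsim\vol_\C(D_n)^{1/2}$ — obtained by pushing a fixed ample $D$ forward by $f^n$ — gives the matching lower bound, so $\alpha(X)=1/2$ and it is achieved.

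Case (2), $\rho(X_\R)=2$, is the subtle one and I expect it to be the main obstacle. The invariant lattice has signature $(1,1)$, so $\Nef(X_\R)$ is a two-dimensional cone bounded by two rays $R_1,R_2$. The dichotomy is whether these boundary rays are rational or irrational. If both are irrational, the isometry group is infinite (a hyperbolic element), there is a positive-entropy real automorphism, and as in case (3) one gets $\alpha(X)=1/2$; the absence of a rational nef class of self-intersection zero is exactly the absence of a real elliptic fibration, so this is the ``no elliptic fibration'' branch — though I would need to be careful that the automorphism thus produced is genuinely \emph{real} and that its entropy ratio is again $1/2$ (same $\mathrm{Sym}^2$ argument). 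If at least one boundary ray is rational, it is spanned by a class $E$ with $E^2=0$ (a null vector in a rank-two Lorentzian lattice that contains an ample class), which by Riemann--Roch and the structure theory of abelian surfaces is (a multiple of) the class of a fiber of a real elliptic fibration $X\to \P^1_\R$; the other boundary ray is then forced to be rational as well (the orthogonal complement of $E$ in a rank-two lattice is rational), so $\Nef(X_\R)$ is a rational polyhedral cone generated by two classes each of which has positive real volume (the elliptic fibration has real fibers, and one checks $\mvol_\R>0$ for each generator, perhaps after passing to $E+E'$), whence $\alpha(X)=1$ by Section~\ref{section-alpha=1}. The hard part will be (a) the precise translation ``irrational boundary ray $\Leftrightarrow$ no real elliptic fibration $\Leftrightarrow$ existence of positive-entropy real automorphism,'' keeping track of the real structure throughout, and (b) verifying $\mvol_\R(D_j)>0$ for the generators in the rational case, which requires knowing that the relevant linear systems contain real members meeting $X(\R)$; I would handle (b) by an explicit translation/theta-divisor argument on the abelian surface and by using that a real elliptic fibration over $\P^1_\R$ with $X(\R)\ne\emptyset$ has a real fiber of positive length.
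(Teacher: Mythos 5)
Your treatment of case (1) is essentially the paper's (Corollary \ref{rho=1}, via Proposition \ref{conenef} and Proposition \ref{pinceau}), and you correctly identify the dichotomy in case (2): the paper phrases it as whether the intersection form on $\NS(X_\R;\Z)$ represents $0$, which for a rank-$2$ Lorentzian lattice is the same as your ``rational vs.\ irrational boundary rays.'' Two small slips there: the other isotropic line is \emph{not} ``the orthogonal complement of $E$'' (since $E^2=0$ forces $E^\perp=\R E$) --- the right argument is that a rank-$2$ Lorentzian form represents $0$ iff its discriminant is a perfect square, in which case both isotropic lines are rational; and the elliptic fibration on an abelian surface produced from an isotropic nef class $\theta$ is the quotient map $X\to X/D$ onto an \emph{elliptic curve}, not over $\P^1_\R$. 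Also, the paper proves $\alpha(X)=1$ in case (2)(i) not by verifying $\mvol_\R>0$ on the generators directly, but by passing to an isogenous product $E_1\times E_2$ where the generators are the two factor curves and then invoking invariance of concordance under isogeny (Proposition \ref{inv-iso}), a cleaner route than the theta-divisor argument you sketch.

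The genuine gap is in case (3), and in the logical role you assign to the ``orbit'' argument. You propose to get the lower bound $\alpha(X)\geq 1/2$ from a single family $D_n=f_*^n D$ with $\vol_\R(D_n)\gtrsim\vol_\C(D_n)^{1/2}$. But membership of $1/2$ in $\mathcal{A}(X)$ (Definition \ref{defi-concordance}) requires the inequality $\mvol_\R(D)\geq C\vol_\C(D)^{1/2}$ for \emph{all} real ample divisors $D$ with $q$-divisible class, not merely for the iterates of one divisor; a single orbit, no matter how favorable, gives no lower bound on $\alpha(X)$. When $\rho(X_\R)=2$ this can be repaired because the cyclic group $\langle f_*\rangle$ acts cocompactly on the projectivized ample cone (this is exactly the content of the paper's Lemma \ref{nb fini d'orbites} inside Theorem \ref{formule-exacte}), so every ample class lies in $f_*^n$ of a fixed compact slab, and one only needs finitely many orbits. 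When $\rho(X_\R)=3$ the projectivized positive cone is $2$-dimensional hyperbolic space, and a cyclic group cannot act cocompactly there; the orbit of $[D]$ under $\langle f_*\rangle$ clusters to two boundary points and leaves out a set of directions of full measure. The paper instead reduces by isogeny to $X=E\times E$, uses the full $\SL_2(\Z)$-action (a lattice in $\SO(2,1)$), exhibits an explicit finite fundamental domain (the hyperbolic triangle with vertices $\P[H]$, $\P[V]$, $\P[\Delta]$), and relies on Lemma \ref{vol-rat}, the pointwise identity $\vol_\R(D)=C\vol_\C(D)^{1/2}$ for rational lines, to propagate the inequality across that domain. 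In short, your ``as in case (3)'' reference is inverted relative to the paper: the orbit argument lives in case (2), and case (3) needs the strictly larger group of automorphisms together with an explicit volume computation.
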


In Section \ref{deformation} we show that there exist surfaces with arbitrary small concordance. More precisely, we prove the following result.

\begin{theorem}\label{thm-intro-k3}
There is a family $(X^t)_{t\in (0,1]}$ of real K3 surfaces embedded in $(\P^1)^3$ such that
\begin{equation}
\lim_{t\to 0}\alpha(X^t)=0.
\end{equation}
\end{theorem}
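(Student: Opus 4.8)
The plan is to exploit the dynamical upper bound for concordance announced in the introduction: if $X_\R$ carries a positive-entropy automorphism $f$, then $\alpha(X)\leq \h(f_{|X(\R)})/\h(f_{|X(\C)})$, and for a K3 surface of Picard number $2$ this is an equality. So it suffices to produce a one-parameter family $(X^t)$ of real K3 surfaces in $(\P^1)^3$, each with $\rho(X^t_\R)=2$, each admitting a natural automorphism $f_t$ coming from the three projections $(\P^1)^3\to(\P^1)^2$, such that the ratio of real to complex topological entropy of $f_t$ tends to $0$ as $t\to 0$. A smooth surface of tridegree $(2,2,2)$ in $(\P^1)^3$ is a K3 surface, and each of the three coordinate projections realizes it as a double cover of $(\P^1)^2$; the associated deck involutions $\sigma_1,\sigma_2,\sigma_3$ generate a group whose composition $f_t=\sigma_1\sigma_2$ (say) has positive entropy $\log\lambda$, where $\lambda$ is an explicit Salem-type number (a root of a quadratic like $t^2-(4k^2-2)t+1$ for the relevant intersection data), independent of $t$; its complex entropy is $\log\lambda$.

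The heart of the construction is to choose the family so that the \emph{real} dynamics collapses while the complex dynamics stays the same. First I would start from a \emph{singular} member $X^0$ of tridegree $(2,2,2)$ — for instance one with a node, or more usefully one whose real locus $X^0(\R)$ degenerates (e.g. a real locus that is a union of spheres shrinking to points, or where the three real involutions acquire large common fixed/periodic loci). Then I would take $(X^t)_{t\in(0,1]}$ to be a real deformation smoothing $X^0$, with $X^t$ smooth for $t>0$. Since the Picard number is upper semicontinuous and a generic such deformation has $\rho=2$, after shrinking the family I may assume $\rho(X^t_\R)=2$ for all $t\in(0,1]$; the automorphism $f_t$ persists along the family because it is defined purely in terms of the three projections. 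By the Picard-number-$2$ case of the dynamical bound, $\alpha(X^t)=\h(f_t{|X^t(\R)})/\h(f_t{|X^t(\C)})=\h(f_t{|X^t(\R)})/\log\lambda$. Because $\log\lambda$ is a fixed positive constant, it remains to show $\h(f_t{|X^t(\R)})\to 0$ as $t\to 0$, i.e. that the real topological entropy of $f_t$ on the real surface can be made arbitrarily small by taking $t$ small.

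The main obstacle is precisely this last point: controlling $\h(f_t{|X^t(\R)})$ from above near the singular limit. The idea is that as $t\to 0$ the real locus $X^t(\R)$ degenerates — say it breaks into components, or its interesting part is squeezed into a thin neighborhood of the singular set of $X^0(\R)$ — so that the map $f_t$ restricted to $X^t(\R)$ has almost no room to expand: one should be able to cover $X^t(\R)$ by a bounded number of pieces on which $f_t$ and its iterates have uniformly bounded distortion, or apply Yomdin-type / Newhouse-type volume-growth estimates showing that the growth rate of $f_t$-iterates of curves in $X^t(\R)$ tends to $0$. Concretely I would estimate the spectral radius of $(f_t)_*$ acting on the (small, since only $1$-dimensional) space $\mathrm H_1(X^t(\R);\R)$ or on the relevant part of homology, and show it tends to $1$; combined with an entropy-from-homology inequality for surface diffeomorphisms (or with an explicit description of $f_t$ near the limit as a near-identity or near-parabolic map), this gives $\h(f_t{|X^t(\R)})\to 0$. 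Verifying that the chosen singular surface $X^0$ genuinely forces this collapse — and that the smoothing can be done real-algebraically inside $(\P^1)^3$ while keeping $\rho=2$ — is the delicate part, and where most of the work of Section \ref{deformation} will go.
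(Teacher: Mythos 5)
Your high-level plan is the right one (Wehler-type K3 surfaces of tridegree $(2,2,2)$ in $(\P^1)^3$, the three covering involutions, the entropy upper bound $\alpha(X)\leq\h(f_\R)/\h(f_\C)$, and a degeneration that kills the real entropy while the complex entropy stays constant). But there is a genuine gap in the place you yourself flag as "the heart of the construction": you do not produce a mechanism that actually forces $\h(f_t|_{X^t(\R)})\to 0$, and the tools you propose for that step mostly point the wrong way. Manning's inequality and any "entropy-from-homology" bound give a \emph{lower} bound on $\h(f_t|_{X^t(\R)})$ from the spectral radius on ${\rm H}_1(X^t(\R);\R)$, so showing that this spectral radius tends to $1$ cannot by itself force the entropy to $0$. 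Likewise the Yomdin/Newhouse volume-growth estimates you invoke are the ones already baked into Theorem~\ref{iteres-mvolr} and do not give uniform control in $t$ without an additional input. The argument of Section~\ref{deformation} supplies exactly this missing input by choosing the family
\begin{equation}
(z_1^2+1)(z_2^2+1)(z_3^2+1)+t\,z_1z_2z_3=2,
\end{equation}
for which the twelve singular points of $X^0(\C)$ are all nonreal, so $X^0(\R)$ remains a \emph{smooth} surface; one then computes directly that the birational map $f^0$ restricts on $X^0(\R)$ to the involution $(x_1,x_2,x_3)\mapsto(-x_1,-x_2,-x_3)$, which has entropy $0$. The family $(f^t_\R)_t$ is $C^\infty$-continuous in $t$ (after trivializing the bundle $\mathcal X\to\R$), and the conclusion $\h(f^t_\R)\to 0$ then follows from the \emph{continuity} of topological entropy on ${\rm Diff}^\infty$ of a compact surface (upper semicontinuity by Yomdin/Newhouse, lower semicontinuity by Katok). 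This soft continuity argument is the key step you are missing.

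Two further points. First, you insist on arranging $\rho(X^t_\R)=2$ so as to use the \emph{equality} $\alpha(X)=\h(f_\R)/\h(f_\C)$ from Theorem~\ref{formule-exacte}; this is both unnecessary and problematic. It is unnecessary because the \emph{inequality} of Theorem~\ref{entropie} holds with no hypothesis on the Picard number and suffices to conclude $\alpha(X^t)\to 0$. It is problematic because for a generic smooth $(2,2,2)$-surface in $(\P^1)^3$ the three fiber classes are independent, so the generic real Picard number is $3$, not $2$; arranging $\rho(X^t_\R)=2$ along the whole family would require extra work and is not what you want. Second, the paper's automorphism is the composition $f^t=s_1^t\circ s_2^t\circ s_3^t$ of all three covering involutions (with $\h(f^t_\C)=\log(9+4\sqrt5)$), not the composition of two as in the $\P^2\times\P^2$ Wehler case you allude to; for the computation that $f^0$ acts on $X^0(\R)$ by $x\mapsto -x$ one needs all three factors.
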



\subsection{Dynamics of automorphisms}\label{intro-dyn}

Let $X$ be a real algebraic surface. We denote by $\Aut(X_\R)$ the group of (real) automorphisms on $X$, that is, biholomorphic maps~$f:X(\C)\to X(\C)$ that commute with the antiholomorphic involution $\sigma$. For~$\K=\R$ or $\C$ the induced self-map on $X(\K)$ is denoted by $f_\K$.

The dynamics of automorphisms on complex surfaces has been broadly studied in the last decades (one may refer to the references given in the surveys \cite{cantat-panorama} and~\cite{bedford-survey}). Let us remember a few facts:
\begin{itemize}
\item[(1)]
The entropy $\h(f_\C)$ is entirely expressed in terms of the action on the cohomology, according to \cite{gromov-entropie} and \cite{yomdin}; namely, it is equal to the logarithm of the spectral radius (called the \emph{spectral logradius} in what follows) of the induced map $f^*$ on ${\rm H}^2(X(\C);\R)$.
\item[(2)]
Automorphisms that have positive entropy, also called \emph{hyperbolic type} automorphisms, can only occur on tori, K3 surfaces, Enriques surfaces, and (nonminimal) rational surfaces, or on blow-ups of such surfaces at periodic orbits \cite{cantat-cras}. Moreover, examples are known on each of these types of surfaces.
\item[(3)]
For hyperbolic type automorphisms we have $\h(f_\C)\geq\log(\lambda_{10})$ \cite{mcmullen-rat}, ${\lambda_{10}\simeq 1,17628081}$ being the Lehmer number. Moreover, this bound is achieved on some rational surfaces \cite{bedford-kim-degree} \cite{mcmullen-rat} and on some K3 surfaces (C. McMullen gives a nonprojective example in \cite{mcmullen-k3-ent}, and announces that there also exists a projective example).
\end{itemize}

On the other hand the dynamics on $X(\R)$ is not as well understood, for we do not dispose of equivalent tools to study it. For instance the entropy $\h(f_\R)$ cannot be deduced from the action on cohomology; it is bounded from below by the spectral radius of $f_\R^*$ on ${\rm H}^1(X(\R);\R)$ \cite{manning}, and from above by $\h(f_\C)$, but may vary within this interval. In particular, we see that, for hyperbolic type automorphisms, the ratio $\h(f_\R)/\h(f_\C)$ is a number between $0$ and $1$ (for tori it always equals~$1/2$: cf Proposition \ref{alpha-leq-1/2}). As proved by Bedford and Kim in \cite{bedford-kim-maxent}, this ratio may happen to be equal to $1$ for some rational\footnote{Throughout the text, rational means rational over $\C$.} surfaces.

\begin{ques}
Is there an example of a real hyperbolic type automorphism on a~K3 or Enriques surface $X$ for which $\h(f_\R)=\h(f_\C)$ ?
\end{ques}

\begin{ques}
Is there an example of a real hyperbolic type automorphism on a surface $X$ for which ${\h(f_\R)=0}$ ?
\end{ques}

\begin{rema}
In \cite{bedford-kim-maxent} the authors prove the maximality of entropy using only homology of real algebraic curves. In this text, I rather use their volumes, which provide a finer measure than their homology classes.
\end{rema}

In Section \ref{section-entropie} we use a theorem due to Yomdin \cite{yomdin} in order to highlight a link between concordance and this ratio of entropies (which is used to prove Theorems~\ref{thm-intro-ab} and \ref{thm-intro-k3}).

\begin{theorem}\label{alpha-entropie}
Let $X$ be a real algebraic surface. Assume that there exists a real hyperbolic automorphism $f$ on $X$. Then
\begin{equation}\label{minoration-alpha}
\alpha(X)\leq\frac{\h(f_\R)}{\h(f_\C)}.
\end{equation}
Moreover, this inequality becomes an equality when $\rho(X_\R)=2$.
\end{theorem}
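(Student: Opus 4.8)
The plan is to establish the inequality \eqref{minoration-alpha} first, and then the equality statement under the hypothesis $\rho(X_\R)=2$.

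For the inequality, let $f$ be a real hyperbolic automorphism. I would exploit the dynamics of $f$ on the Néron--Severi group. Let $\lambda = e^{\h(f_\C)}>1$ be the spectral logradius of $f^*$ on $\mathrm{H}^2(X(\C);\R)$; since $f$ is hyperbolic this eigenvalue is simple, real, and larger than $1$, with a corresponding nef eigenclass $\theta^+$ (the leading eigenvector of $f^*$), and likewise $\theta^-$ for $f_*$ with eigenvalue $\lambda$. Start from an ample class $D$ with $q$-divisible Chern class, and consider the pullbacks $(f^n)^*D$. On the complex side, $\vol_\C\big((f^n)^*D\big) = \big((f^n)^*D\big)^2 = D^2$ is constant (automorphisms preserve the intersection form), but the class $(f^n)^*D$ grows like $\lambda^n\,\theta^+$. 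On the real side, $(f^n)_\R$ is a diffeomorphism of $X(\R)$, so $\vol_\R\big((f^n)^*D\big)$ is the volume of the divisor $D(\R)$ measured with the pulled-back metric $((f^n)_\R)^*g$; by Yomdin's theorem (as invoked in Section~\ref{section-entropie}), this quantity grows at most like $e^{n\,\h(f_\R)+o(n)}$. So after rescaling to bring $(f^n)^*D$ into a fixed ample class by dividing by $\lambda^{n/2}$ (or applying the already-established scaling properties of $\mvol_\R$ and $\vol_\C$ under multiplication of the Chern class), the inequality $\mvol_\R \geq C\,\mvol_\C^\alpha$ applied along this sequence forces $e^{n\,\h(f_\R)} \gtrsim \big(\lambda^n\big)^{\alpha} = e^{n\,\alpha\,\h(f_\C)}$ up to subexponential factors; letting $n\to\infty$ gives $\h(f_\R)\geq \alpha\,\h(f_\C)$, hence $\alpha \leq \h(f_\R)/\h(f_\C)$. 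Taking the supremum over $\alpha\in\mathcal{A}(X)$ yields \eqref{minoration-alpha}. The main obstacle here is bookkeeping the rescaling of Chern classes against the $q$-divisibility constraint in Definition~\ref{defi-concordance} (one likely passes to $(f^{qn})^*D$ or clears denominators), and making the application of Yomdin's bound to $\vol_\R$ of a \emph{fixed} subvariety under iterated pullback metrics precise — this should be done once in Section~\ref{section-entropie} and cited here.

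For the reverse inequality when $\rho(X_\R)=2$, I would argue that the nef cone $\Nef(X_\R)\subset \NS(X_\R)_\R \cong \R^2$ is a two-dimensional closed convex cone, hence bounded by two extremal rays $\R_{\geq 0}u_1$ and $\R_{\geq 0}u_2$. A real hyperbolic automorphism $f$ acts on this plane preserving the intersection form (signature $(1,1)$) and preserving $\Nef(X_\R)$, with eigenvalues $\lambda^{\pm 1}$, $\lambda = e^{\h(f_\C)}$; the eigenrays are exactly the two boundary rays $\R_{\geq 0}\theta^+$ and $\R_{\geq 0}\theta^-$, so $\{u_1,u_2\}=\{\theta^+,\theta^-\}$. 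Now given any ample $q$-divisible $D$, its class lies in the interior of the cone, so $D \asymp a\,\theta^+ + b\,\theta^-$ with $a,b>0$ comparable to $\sqrt{\vol_\C(D)}$ (using $D^2 = 2ab\,(\theta^+\cdot\theta^-)$ and $(\theta^\pm)^2=0$). Choosing $n$ with $\lambda^n \asymp \sqrt{D^2}/(\text{const})$ so that $(f^{-n})^*D$ (which has class $\asymp a\lambda^{-n}\theta^+ + b\lambda^n\theta^-$, normalized appropriately) sits in a \emph{fixed} bounded region of the ample cone, one gets a divisor $D'$ in the $f$-orbit of $D$ with bounded complex volume and with $\mvol_\R(D') \geq \mvol_\R(D)$ up to the distortion $e^{n\,\h(f_\R)}$ coming from the diffeomorphism $f^n_\R$ — more precisely $\mvol_\R(D) \geq e^{-n\,\h(f_\R)+o(n)}\,\mvol_\R(D')$, and since $D'$ ranges over a compact family of ample classes, $\mvol_\R(D')$ is bounded below by a positive constant (here one needs that some ample class has $\mvol_\R > 0$, which holds because $X(\R)\neq\emptyset$ and real ample divisors have real points after passing to a multiple, or by a genericity argument on the linear system). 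Combining with $\lambda^n \asymp \sqrt{\vol_\C(D)}$ gives $\mvol_\R(D) \geq C\,\lambda^{-n\,\h(f_\R)/\h(f_\C)} \asymp C\,\vol_\C(D)^{-\h(f_\R)/(2\h(f_\C))}$ — wait, the sign of the exponent must come out positive: the correct normalization uses $e^{+n\h(f_\R)}$ on the favorable side, giving $\mvol_\R(D) \geq C\,\vol_\C(D)^{\h(f_\R)/(2\h(f_\C))}$ after tracking which iterate expands $D(\R)$; I would set this up carefully so the exponent $\alpha = \h(f_\R)/\h(f_\C)$ lands in $\mathcal{A}(X)$.

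The step I expect to be the genuine obstacle is the lower bound direction: controlling $\mvol_\R(D')$ uniformly from below over the compact family of normalized ample classes, and ensuring the Yomdin-type volume estimate goes the \emph{right} way (a lower bound on the volume of an iterated image, not just an upper bound). For this I anticipate needing that $f_\R$ expands volume of $D(\R)$ along the stable/unstable direction at a rate governed by $\h(f_\R)$ on average — which is exactly the content linking the topological entropy of $f_\R$ to growth of volumes of iterated submanifolds, the substitute on the real side for the Gromov--Yomdin identity. Getting a clean two-sided statement, valid for the specific curve $D(\R)$ rather than a generic one, is where the hypothesis $\rho(X_\R)=2$ is really used: it pins the orbit of any ample ray to shuttle between the two extremal rays $\theta^\pm$, so a single dynamical rate controls everything, whereas for $\rho(X_\R)\geq 3$ the nef cone can have extra extremal rays not on the $f$-orbit and the argument breaks.
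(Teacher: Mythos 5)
Your overall architecture is the same as the paper's (iterate $f$ on an ample class, compare the growth of complex volume against the Yomdin bound on real volume, then reverse the estimate for $\rho=2$ using the two-ray structure of $\Nef(X_\R)$), but there is a concrete and consequential error in the first half. You assert that $\vol_\C\bigl((f^n)^*D\bigr)=\bigl((f^n)^*D\bigr)^2=D^2$ is constant. That confuses the complex volume with the self-intersection. By Wirtinger's formula (equation \ref{formule-vol} in the paper), for a surface one has $\vol_\C(D)=[\kappa]\cdot[D]$, which is \emph{linear} in the Chern class, not quadratic; since $f_*^n[D]\sim\lambda^n\theta^+$, the complex volume grows like $\lambda^n=e^{n\h(f_\C)}$. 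This exponential growth is exactly what drives the argument: one feeds $\mvol_\R(f_*^nD)\geq C\vol_\C(f_*^nD)^\alpha$ into $\limsup\frac1n\log\mvol_\R(f_*^nD)\leq\h(f_\R)$ and $\lim\frac1n\log\vol_\C(f_*^nD)=\h(f_\C)$ to get $\h(f_\R)\geq\alpha\h(f_\C)$ directly, with no rescaling and no divisibility gymnastics. Your intermediate ``divide by $\lambda^{n/2}$'' step, and the spurious factor $1/2$ in the exponent $\h(f_\R)/(2\h(f_\C))$ that surfaces in your second half, are both downstream artifacts of this $D^2$ versus $[\kappa]\cdot[D]$ confusion and would make the final exponent come out wrong if carried through honestly.

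For the equality when $\rho(X_\R)=2$, you correctly identify the shape of the argument (the two eigenrays of $f_*$ are the boundary rays of $\Nef(X_\R)$, a fundamental domain for the $f_*$-action reduces everything to finitely many orbits, and superadditivity plus Minkowski's inequality reassembles the estimate), and you correctly identify the genuine obstacle: a \emph{lower} bound on $\mvol_\R(f_*^nD)$ of order $\lambda^{n\alpha}$ for any $\alpha<\h(f_\R)/\h(f_\C)$, which Yomdin alone does not give. You flag it but do not supply it. The paper's resolution is Theorem~\ref{horseshoe}, which uses Katok's approximation of entropy by horseshoes for $C^{1+\epsilon}$ surface diffeomorphisms: one finds a curve in the very ample family $\mathcal{V}(D)$ crossing a horseshoe rectangle transversally to a stable manifold, and its iterates then pick up $q^n$ subarcs, giving $\lo(g^n\gamma)\geq C\lambda^n$ for any $\lambda<e^{\h(f_\R)}$. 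Without something of this type (a lower bound of Newhouse/Katok flavor, not an upper bound of Yomdin flavor), the containment $\h(f_\R)/\h(f_\C)\in\mathcal{A}(X)$ does not follow. So the proposal is a correct blueprint for the upper bound once the $\vol_\C$ error is fixed, but it leaves the key lemma for the lower bound as an acknowledged gap.
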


\begin{corollary}\label{lehmer}
Let $f$ be a real automorphism of a real algebraic surface $X$. If~$\h(f_\R)>0$, then
\begin{equation}
\h(f_\R)\geq\lambda_{10}\,\alpha(X),
\end{equation}
where $\lambda_{10}$ denotes the Lehmer number, that is, the largest root of the polynomial $x^{10}+x^9-x^7-x^6-x^5-x^4-x^3+x+1$.
\end{corollary}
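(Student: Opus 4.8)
The plan is to obtain this as a direct consequence of Theorem~\ref{alpha-entropie}, combined with the McMullen--Lehmer lower bound on complex entropy recalled in \S\ref{intro-dyn}.

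First I would reduce to the case of a hyperbolic automorphism. Since $X(\R)$ is a compact $f$-invariant subset of $X(\C)$, topological entropy is monotone under restriction, so $\h(f_\R)\le\h(f_\C)$ (this is the upper bound quoted in \S\ref{intro-dyn}). Hence the hypothesis $\h(f_\R)>0$ forces $\h(f_\C)>0$, which means $f$ is a real hyperbolic automorphism, so Theorem~\ref{alpha-entropie} applies and gives
\begin{equation}
\alpha(X)\le\frac{\h(f_\R)}{\h(f_\C)},
\end{equation}
that is, $\h(f_\R)\ge\alpha(X)\,\h(f_\C)$.

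Second, I would invoke fact~(3) of \S\ref{intro-dyn}: for a hyperbolic type automorphism, McMullen's theorem \cite{mcmullen-rat} asserts that the spectral radius of $f^*$ on ${\rm H}^2(X(\C);\R)$ is at least the Lehmer number $\lambda_{10}$; since, by Gromov and Yomdin, $\h(f_\C)$ is the logarithm of this spectral radius, we get $\h(f_\C)\ge\log(\lambda_{10})$. Substituting into the previous inequality yields $\h(f_\R)\ge\alpha(X)\log(\lambda_{10})$, which is the asserted bound. (When $\alpha(X)=0$ the statement is vacuous, so there is nothing to prove in that case.)

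There is essentially no obstacle here: the corollary is a formal consequence of Theorem~\ref{alpha-entropie} and a citation. The only step deserving a word of justification is the passage from ``positive real entropy'' to ``$f$ hyperbolic'', which rests on the monotonicity $\h(f_\R)\le\h(f_\C)$ of topological entropy under restriction to a closed invariant subset; everything else is arithmetic.
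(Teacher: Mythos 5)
Your argument is correct and is exactly the intended (if unwritten) derivation: the corollary is stated in the paper without proof, as an immediate consequence of Theorem~\ref{alpha-entropie} together with the McMullen lower bound $\h(f_\C)\ge\log(\lambda_{10})$, and your reduction from $\h(f_\R)>0$ to the hyperbolic case via the restriction inequality $\h(f_\R)\le\h(f_\C)$ is the right way to justify invoking the theorem. One thing worth flagging: what you actually derive is $\h(f_\R)\ge\alpha(X)\log(\lambda_{10})$, whereas the corollary as printed reads $\h(f_\R)\ge\lambda_{10}\,\alpha(X)$, which would be strictly stronger since $\lambda_{10}>\log(\lambda_{10})$. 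This is a misprint in the paper rather than a gap in your proof: the quantity $\alpha(X)\log(\lambda_{10})$ is precisely the threshold used when the corollary is applied in the proof of Proposition~\ref{non-dense}, confirming that $\log(\lambda_{10})$ is what is meant.
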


When $\alpha(X)>0$, these results enables us to show nondensity and discreteness results for $\Aut(X_\R)$ in the group of diffeomorphisms of $X(\R)$, as well as in some of its subgroups (\textsection \ref{section-non-dense}).



\subsection*{Acknowledgements}

The work presented in this paper is part of my PhD thesis. I am especially grateful to my thesis advisor Serge Cantat who suggested the topic, encouraged my progress, and patiently read and corrected this text many times during its preparation. I also thank the referee for giving this paper an exceptionally prompt and thorough reading, and for his helpful and positive comments. I would also like to thank S\'ebastien Gou\"ezel, Yutaka Ishii, Fr\'ed\'eric Mangolte and Anton Zorich for many useful discussions. Finally I would like to thank Laura DeMarco for having invited me to the Workshop on Dynamics at the University of Illinois at Chicago in may 2010, as well as all the organizers of this conference.



\section{First Properties of Concordance}


\subsection{Conventions and notations}

In what follows, $X$ denotes the ambient real algebraic variety, and $d$ its dimension. Moreover, $X$ is always supposed to be \emph{projective}, \emph{smooth}, \emph{irreducible} and with \emph{nonempty} real locus. The set $X(\R)$ is then a real analytic $d$-dimensional manifold, with a finite number of connected components. In contrast, we make no particular assumption for subvarieties $Y$ of~$X$. The antiholomorphic involution that defines the real structure on $X$ is denoted by~$\sigma_X$, or simply by $\sigma$ when no confusion is possible.

The cohomology groups ${\rm H}^k(X(\C);\Z)$ are implicitly taken  \emph{modulo torsion}, so that we can consider them as lattices in ${\rm H}^k(X(\C);\R)$.

The {\it complex N\'eron--Severi group} of $X$, denoted by $\NS(X_\C;\Z)$, is the subgroup of~${\rm H}^2(X(\C);\Z)$ whose elements are Chern classes of divisors on $X(\C)$. We denote by $[D]$ the (first) Chern class of a divisor $D$. By the Lefschetz theorem on $(1,1)$-classes (see \cite[p. 163]{griffiths-harris}), we have
\begin{equation}
\NS(X_\C;\Z)={\rm H}^{1,1}(X(\C);\R)\cap{\rm H}^2(X(\C);\Z).
\end{equation}

The {\it real N\'eron--Severi group} of $X$, denoted by $\NS(X_\R;\Z)$, is the subgroup of~$\NS(X_\C;\Z)$ whose elements are classes of real divisors. Recall that
\begin{equation}\label{pb-sign}
[\sigma(D)]=-\sigma^*[D]
\end{equation}
for any complex divisor $D$ (see \cite[\textsection I.4]{silhol}), where $\sigma^*$ denotes the involution on ${\rm H}^2(X(\C);\Z)$ induced by the complex conjugation $\sigma$. Hence
\begin{equation}
\NS(X_\R;\Z)=\{\theta\in\NS(X_\C;\Z)\,|\,\sigma^*\theta=-\theta\},
\end{equation}

Both $\NS(X_\C;\Z)$ and $\NS(X_\R;\Z)$ are free abelian groups of finite rank. Their respective ranks are the \emph{complex} and \emph{real Picard numbers} of $X$, denoted by $\rho(X_\C)$ and $\rho(X_\R)$. For $\K=\R$ or $\C$ we denote by $\NS(X_\K;\R)$ the subspace of ${\rm H}^{1,1}(X(\C);\R)$ spanned by $\NS(X_\K;\Z)$; it has dimension $\rho(X_\K)$.

When $X$ is a surface, the intersection form gives rise to a nondegenerate quadratic form on ${\rm H}^2(X(\C);\R)$, with integral values on ${\rm H}^2(X(\C);\Z)$. By the Hodge index theorem, its signature on the subspace $\NS(X_{\K};\R)$ is $(1,\rho(X_\K)-1)$. Consequently, the positive cone for the intersection form has two connected components, one of which contains classes of ample divisors. This component is an open convex cone in $\NS(X_{\K};\R)$, denoted by $\Pos(X_\K)$. Other convex cones in $\NS(X_\K;\R)$ have their own interest and are used throughout this text, like the ample cone $\Amp(X_\K)$, the nef cone $\Nef(X_\K)$, which is its closure, and the cone of curves $\b{\rm NE}(X_\K)$ (for surfaces it is the same as the pseudo-effective cone), which is the dual of the last one. For all these notions, we refer to  \cite{lazarsfeld}.


\subsection{Positivity of volumes} 

From now on we fix a K\"ahler metric on the complex manifold $X(\C)$. Its K\"ahler form is denoted by $\kappa$.


\subsubsection{Complex volumes}

Let $D$ be an effective divisor on $X$. The volume of $D(\C)$ only depends on the Chern class $[D]$. More precisely,
\begin{equation}\label{formule-vol}
\vol_\C(D)=\frac{1}{(d-1)!}[\kappa^{d-1}]\cdot[D] > 0.
\end{equation}

\begin{proposition}\label{min-unif-volc}
There exists a positive constant $K$ such that
\begin{equation}
\vol_\C(D)\geq K
\end{equation}
 for all effective divisors $D\neq 0$.
\end{proposition}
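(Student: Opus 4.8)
The statement to prove is Proposition~\ref{min-unif-volc}: there is a uniform positive lower bound for $\vol_\C(D)$ over all nonzero effective divisors $D$. The plan is to combine the formula $\vol_\C(D)=\frac{1}{(d-1)!}[\kappa^{d-1}]\cdot[D]$ from \eqref{formule-vol} with the fact that effective divisor classes form a discrete subset of a cone on which the linear functional $[\kappa^{d-1}]\cdot(\,\cdot\,)$ is strictly positive. So the whole problem reduces to bounding the linear functional $\theta\mapsto\frac{1}{(d-1)!}[\kappa^{d-1}]\cdot\theta$ away from zero on the set of classes of nonzero effective divisors.

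First I would observe that the Chern class $[D]$ of an effective divisor lies in the integral lattice $\NS(X_\C;\Z)\subset\NS(X_\C;\R)$, and moreover in the closed cone $\b{\rm NE}(X_\C)$ (pseudo-effective cone) generated by classes of effective divisors; since $D\neq 0$ is effective, $[D]\neq 0$ as well, because \eqref{formule-vol} already gives $\vol_\C(D)>0$, hence $[\kappa^{d-1}]\cdot[D]>0$, so in particular $[D]$ is a nonzero lattice point. Next, the key point is that $L:=\frac{1}{(d-1)!}[\kappa^{d-1}]\cdot(\,\cdot\,)$ is a linear form on the finite-dimensional space $\NS(X_\C;\R)$ which is \emph{strictly positive} on $\b{\rm NE}(X_\C)\setminus\{0\}$: indeed for any nonzero pseudo-effective class, writing it as a limit of nonnegative combinations of effective classes and using $[\kappa^{d-1}]\cdot[D]>0$ for each effective $D\neq 0$, one gets $L\geq 0$ on the cone, and strict positivity on nonzero classes follows since the K\"ahler class pairs strictly positively with any nonzero nef/pseudo-effective class by the Hodge index theorem / hard Lefschetz positivity. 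Then the set $S=\b{\rm NE}(X_\C)\cap\NS(X_\C;\Z)\setminus\{0\}$ is a discrete subset of $\R^{\rho(X_\C)}$ on which the continuous function $L$ is positive; I would intersect with the compact slice $\{\theta:\|\theta\|\leq R\}$ for a suitable norm and $R$, conclude there are only finitely many lattice points there with $L$ below any given threshold, and take $K$ to be the minimum of $L$ over those finitely many points (together with the fact that outside the ball $L$ is automatically bounded below, by homogeneity and positivity on the compact cross-section of the cone).

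The one subtlety — and the main thing to be careful about rather than a genuine obstacle — is making the argument clean: the cleanest route is probably \emph{not} to enumerate lattice points but to note that $\b{\rm NE}(X_\C)$ is a closed cone, intersect it with the unit sphere to get a compact set $C$, observe $L>0$ on $C$ by the positivity just discussed so $L\geq c>0$ on $C$ for some constant $c$, and then for any effective $D\neq 0$ use that $[D]$ is a nonzero \emph{integral} vector, hence $\|[D]\|\geq m$ for some fixed $m>0$ (the minimum norm of a nonzero lattice vector), so by homogeneity $L([D])=\|[D]\|\cdot L\!\left([D]/\|[D]\|\right)\geq m\cdot c=:K>0$. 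Thus $\vol_\C(D)=L([D])\geq K$ for all nonzero effective $D$, which is the claim. I would be slightly careful to state where strict positivity of the K\"ahler class on nonzero pseudo-effective classes comes from — for surfaces ($d=2$) this is immediate from the Hodge index theorem applied in $\NS(X_\C;\R)$, since $[\kappa]$ is inside the positive cone and pairs strictly positively with the whole closed positive cone minus the origin, and $\b{\rm NE}(X_\C)$ sits inside the closure of that cone; in higher dimension one invokes the analogous positivity of the K\"ahler class against pseudo-effective classes.
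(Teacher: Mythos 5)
Your approach is genuinely different from the paper's, and considerably more elaborate. The paper's proof is elementary: it reduces to the Fubini--Study metric on some $\P^n\supset X$ (equivalence of Riemannian metrics on the compact manifold $X(\C)$ only costs a multiplicative constant), and then uses Wirtinger's formula to identify $\vol_\C(D)$ with $\deg(D)\,\vol_\C(\P^{d-1})$, where $\deg(D)$ is a positive integer; so $K=\vol_\C(\P^{d-1})$ works. Your route --- compactness of the unit-sphere slice of $\b{\rm NE}(X_\C)$, discreteness of $\NS(X_\C;\Z)$, and the homogeneity trick at the end --- is a correct general strategy, but it needs strict positivity of $\theta\mapsto[\kappa^{d-1}]\cdot\theta$ on all of $\b{\rm NE}(X_\C)\setminus\{0\}$ (not just on effective classes), and that is a substantially heavier input than the integrality of the degree.

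The justification you give for that positivity has a real error in the surface case. You claim that ``$\b{\rm NE}(X_\C)$ sits inside the closure of [the positive cone],'' but this fails whenever $X$ carries a curve of negative self-intersection --- a $(-1)$-curve on a blown-up rational surface, or a $(-2)$-curve on a K3 surface --- since the class of such a curve is pseudo-effective yet lies outside $\b{\Pos(X_\C)}$. So the Hodge index argument as stated does not give you $[\kappa]\cdot\theta>0$ on $\b{\rm NE}(X_\C)\setminus\{0\}$. The claim is nonetheless true, for a different reason: any nonzero pseudo-effective class is represented by a nonzero positive closed $(1,1)$-current $T$, and $\int_X T\wedge\kappa^{d-1}>0$ because $\kappa^{d-1}$ is a strictly positive $(d-1,d-1)$-form; this also handles arbitrary $d$ uniformly. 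With that replacement your proof closes. (A smaller point: $[\kappa]$ is a K\"ahler class and need not lie in $\NS(X_\C;\R)$ at all, so even the assertion that it sits inside the positive cone of the N\'eron--Severi lattice would require projecting it there first.)
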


\begin{proof}
As all Riemannian metrics are equivalent, it is enough to show the inequality when the metric is the Fubini--Study metric on $\P^n\supset X$. In this case the volume of~$D(\C)$ is  proportional to the degree of $D$ as a subvariety of $\P^n$, which is a positive integer. Thus we get the lower bound with $K=\vol_\C(\P^{d-1})>0$.
\end{proof}


\subsubsection{Real volumes}\label{pos-mvolr}

Let $D$ be a real effective divisor on $X$. Although the volume of~$D(\C)$ is always positive, it may happen that $\vol_\R(D)=0$ for some divisors~$D$. For instance, on $X=\P^d_\R$, for any even degree $\delta$ the divisor $D_\delta$ given by the equation $\sum_{j=0}^dZ_j^\delta=0$ has an empty real locus, hence $\vol_\R(D_\delta)=0$. Yet this divisor is numerically (and even linearly) equivalent to $D'_\delta$ given by $\sum_{j=1}^dZ_j^\delta=Z_0^\delta$, and we have $\vol_\R(D'_\delta)>0$.

Thus what is important is not the positivity of $\vol_\R(D)$, but that of $\mvol_\R(D)$. Remember that $\mvol_\R(D)=\max\{\vol_\R(D')\,|\,D'\in\mathcal{V}(D)\}$, where $\mathcal{V}(D)$ is the set of real effective divisors (numerically) equivalent to $D$. Since 
\begin{equation}
\mathcal{V}(D_1+D_2)\supset\mathcal{V}(D_1)+\mathcal{V}(D_2),
\end{equation}
the function $\mvol_\R$ is superadditive on the set of real effective divisors. In particular, for all $k\in\N^*$,
\begin{equation}
\mvol_\R(kD)\geq k\mvol_\R(D).
\end{equation}

\begin{proposition}\label{pinceau}
Let $D$ be a real effective divisor such that the linear system $|D|$ contains a pencil, that is, $h^0(X,\O_X(D))\geq 2$. Then $\mvol_\R(D)>0$.
\end{proposition}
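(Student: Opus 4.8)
The plan is to exhibit a real effective divisor $D'$ numerically equivalent to $D$ with $\vol_\R(D')>0$, produced by a generic member of the pencil. Since $h^0(X,\O_X(D))\geq 2$, the linear system $|D|$ contains a pencil; because $D$ is real (i.e.\ $\sigma$-invariant as a divisor class, and can be taken $\sigma$-invariant as a divisor), the pencil is defined over $\R$, so it contains a one-parameter family $\{D_t\}_{t\in\P^1(\R)}$ of real effective divisors, all numerically equivalent to $D$. First I would fix a point $p\in X(\R)$ that does not lie in the base locus of the pencil — such a point exists because the base locus has codimension at least $2$ (the pencil is not composed with the fixed part, so distinct members meet properly), hence is a proper real subvariety of the $d$-dimensional manifold $X(\R)$, which has nonempty interior; so $X(\R)$ is not contained in the base locus. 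Through $p$ passes exactly one member $D_{t_0}$ of the pencil, and by construction $p\in D_{t_0}(\R)$.

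The next step is to upgrade "$D_{t_0}(\R)\neq\emptyset$" to "$\vol_\R(D_{t_0})>0$", i.e.\ to rule out the possibility that $D_{t_0}(\R)$ is a lower-dimensional set (e.g.\ an isolated real point of a curve in a surface, as happens for the conic $Z_1^2+Z_2^2=Z_0^2$ degenerating — wait, rather the phenomenon $x^2+y^2=0$). To handle this I would choose $p$ more carefully: pick a real point $p$ at which some member of the pencil is smooth of the expected dimension $d-1$. Concretely, since the generic member $D_t$ is reduced and, on a dense open set, smooth, and since the incidence variety $\{(x,t)\in X\times\P^1 : x\in D_t\}$ is itself a real algebraic variety dominating $X$ with $(d-1)$-dimensional fibers, its real locus is nonempty (it contains $(p,t_0)$ from the previous paragraph) and one can choose a real point $(p',t')$ that is a smooth point of this incidence variety with the projection to $\P^1$ submersive there; then near $p'$ the real locus $D_{t'}(\R)$ is a smooth $(d-1)$-manifold, so $\vol_\R(D_{t'})>0$. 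Since $D_{t'}\in\mathcal V(D)$, this gives $\mvol_\R(D)\geq\vol_\R(D_{t'})>0$.

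The main obstacle is precisely the second step: producing a real point at which a real member of the pencil is genuinely $(d-1)$-dimensional, not just nonempty. The subtlety is that the "generic member is smooth" statement is about the complex generic member, and one must ensure it can be arranged over a real parameter $t$ and at a real point. The cleanest way to secure this is a dimension count on the real incidence variety combined with the fact that a real algebraic variety whose complexification has dimension $m$ and whose real locus is nonempty has real locus of dimension exactly $m$ near its smooth real points, together with the observation that the smooth locus of the incidence variety is Zariski-dense and defined over $\R$, hence its real points (once known to be nonempty) are dense in the real locus. After that, everything is routine.
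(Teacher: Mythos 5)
Your approach is essentially the paper's, phrased through the incidence variety rather than by invoking Bertini directly. The paper's proof goes: take a real pencil $(D_\lambda)_{\lambda\in\P^1}$; Bertini gives a finite set $S\subset\P^1(\C)$ outside of which $D_\lambda$ is smooth away from the base locus $B$; pick $P\in X(\R)$ avoiding $B\cup\bigcup_{\lambda\in S}D_\lambda$ (possible since this is a proper Zariski--closed subset and $X(\R)$ is Zariski--dense); the unique $\lambda$ with $P\in D_\lambda$ is then automatically real (by uniqueness and $\sigma$-invariance of $P$) and not in $S$, so $D_\lambda$ is smooth at $P$ and $D_\lambda(\R)$ is a genuine $(d-1)$-manifold near $P$. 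That last ``reality by uniqueness'' step is the clean way to get a real smooth member through a real point.

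Your incidence-variety version carries the same content but leaves the second step not fully closed. You say $I(\R)\neq\emptyset$ (it contains $(p,t_0)$) and then assert one ``can choose a real point $(p',t')$ that is a smooth point of $I$ with $\pi_2$ submersive there.'' To make this rigorous you should: (a) observe that $(p,t_0)$ itself is already a smooth point of $I$, since $\pi_1:I\to X$ is a local isomorphism off the base locus and $X$ is smooth — so $I(\R)$ is a $d$-dimensional manifold near $(p,t_0)$; and (b) note that the critical locus of $\pi_2$ is a proper Zariski--closed real subvariety of $I$, so its real points form a set of dimension $<d$ and cannot fill the $d$-dimensional real manifold $I(\R)$ near $(p,t_0)$, forcing the existence of the desired $(p',t')$. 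Your last paragraph gestures at this (``dimension count,'' ``smooth locus Zariski--dense and defined over $\R$'') but the phrase ``hence its real points (once known to be nonempty) are dense in the real locus'' is not self-justifying without (a), and you never separately address (b), i.e.\ the critical locus of $\pi_2$ as distinct from the singular locus of $I$. Once those two points are filled in, the argument works and is an acceptable alternative to quoting Bertini — though the Bertini route is shorter.
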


\begin{proof}
Let $(D_\lambda)_{\lambda\in\P^1(\C)}$ be a real pencil in $|D|$ (in this context, real means $D_{\b\lambda}=\sigma(D_\lambda)$). By Bertini's theorem \cite[p.137]{griffiths-harris} there is a finite set $S\subset\P^1(\C)$ such that, for all $\lambda\notin S$, the subvariety $D_\lambda(\C)$ is smooth away from the base locus~$B$ of the pencil $(D_\lambda)_{\lambda\in\P^1(\C)}$. Let $P\in X(\R)\backslash \left(B\cup\bigcup_{\lambda\in S} D_\lambda\right)$. Then there exists $\lambda$ in~$\P^1(\R)\backslash S$ such that the point $P$ is on (the support of) the divisor $D_\lambda$. As~$D_\lambda$ is smooth at $P$, the real locus $D_\lambda(\R)$ contains an arc around $P$, and thus  $\mvol_\R(D)\geq\vol_\R(D_\lambda)>0$.
\end{proof}

This proposition applies, for instance, when $D$ is very ample. In contrast, it may happen that $\mvol_\R(D)=0$ for some effective divisors that are not ample, as shown in the two following examples. It is for this reason that we restrict ourselves to ample divisors in the definition of concordance.

\begin{example}\label{blowup}
Let $X$ be the variety obtained by blowing up $\P^d_\R$ at two (distinct) complex conjugate points, and let $E$ be the exceptional fiber of the blow-up. For any $k\in\N^*$, we have $\mathcal{V}(kE)=\{kE\}$, thus $\mvol_\R(kE)=0$, for $E(\R)$ is empty. Nevertheless, observe that $[E]$ is not in the closure of the cone $\Pos(X_\R)$, since its self-intersection is negative.
\end{example}

\begin{example}\label{quartique}
Let $C$ be a real smooth quartic in $\P^2_{\R}$ such that $C(\R)$ is empty (for instance, the one given by $Z_0^4+Z_1^4+Z_2^4=0$). Take $8$ pairs of complex conjugate points $(P_i,\b P_i)$ on $C$ in such a way that the linear class of $\sum_i(P_i+\b P_i) -\O_{\P^2}(4)_{|C}$ is not a torsion point of ${\rm Pic}^0(C)$. Let $\pi:X\to\P^2$ be the blow-up morphism above these $16$ points (defined over $\R$) and let $C'$ be the strict transform of $C$ in~$X$. For all divisors $D$ in $\mathcal{V}(kC')$ the curve $\pi_*D$ has degree $4k$ and passes through the~$16$ blown-up points with multiplicity at least $k$. Then the choice of the points~$P_i$ implies that $\pi_*D=kC$, hence $D=kC'$. So we see that $\mvol_\R(kC')=0$ for all~$k\in\N^*$. Here~$C'$ is a nef divisor that is not ample as an irreducible divisor with self-intersection~$0$ (see \cite[\textsection 1.4]{lazarsfeld}).
\end{example}


\subsection{The interval $\mathcal{A}(X)$}\label{section-concordance}

Remember that (Definition \ref{defi-concordance}) $\mathcal{A}(X)$ is the set of exponents $\alpha\geq 0$ for which there exist $C>0$ and $q\in\N^*$ such that, for all real ample divisors $D$ with $[D]$ $q$-divisible, we have $\mvol_\R(D)\geq C\vol_\C(D)^\alpha$. This set depends only on $X$, and not on the choice of the metric.

\begin{lemma}
Assume that $\alpha\in\mathcal{A}(X)$. Then $\beta\in\mathcal{A}(X)$ for all $0\leq\beta<\alpha$.
\end{lemma}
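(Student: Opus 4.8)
The plan is to deduce the inequality $\mvol_\R(D)\geq C'\vol_\C(D)^\beta$ from the hypothesis $\mvol_\R(D)\geq C\vol_\C(D)^\alpha$, for all real ample divisors $D$ with $[D]$ $q$-divisible, by absorbing the loss of the exponent into the constant. The key point is that $\vol_\C(D)$ is \emph{uniformly bounded below} away from zero: by Proposition \ref{min-unif-volc} there is $K>0$ with $\vol_\C(D)\geq K$ for every nonzero effective divisor, in particular for every ample one. I would keep the same $q$ and look for a suitable $C'$.

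First I would observe that on the half-line $[K,+\infty)$, the function $t\mapsto t^{\alpha-\beta}$ is bounded below by $K^{\alpha-\beta}$ when $\alpha\geq\beta$ (this is where $\beta<\alpha$, or even $\beta\leq\alpha$, is used, together with $K$ possibly being less than $1$; if $K\geq 1$ one may simply use $1$). Hence for every real ample $D$ with $[D]$ $q$-divisible,
\begin{equation}
\vol_\C(D)^\alpha=\vol_\C(D)^\beta\cdot\vol_\C(D)^{\alpha-\beta}\geq K^{\alpha-\beta}\,\vol_\C(D)^\beta.
\end{equation}
Combining this with the hypothesis gives
\begin{equation}
\mvol_\R(D)\geq C\,\vol_\C(D)^\alpha\geq C\,K^{\alpha-\beta}\,\vol_\C(D)^\beta,
\end{equation}
so that $\beta\in\mathcal{A}(X)$ with the constant $C'=C\,K^{\alpha-\beta}>0$ and the same integer $q$.

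I do not expect any real obstacle here; the only thing to be careful about is the direction of the inequality $t^{\alpha-\beta}\geq K^{\alpha-\beta}$, which requires remembering that $K$ from Proposition \ref{min-unif-volc} need not be $\geq 1$, so one should take the lower bound $\min(K,1)^{\alpha-\beta}$ (equivalently, replace $K$ by $\min(K,1)$ at the outset) to be safe. Everything else — the uniform positivity of $\vol_\C$, the metric-independence of $\mathcal{A}(X)$ — has already been established in the preceding subsections.
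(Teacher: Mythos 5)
Your proof is correct and follows essentially the same route as the paper: invoke Proposition \ref{min-unif-volc} to get a uniform lower bound $\vol_\C(D)\geq K$, then absorb the factor $\vol_\C(D)^{\alpha-\beta}\geq K^{\alpha-\beta}$ into the constant. The caveat about $K<1$ is unnecessary — since $\alpha-\beta>0$, the map $t\mapsto t^{\alpha-\beta}$ is increasing on $(0,\infty)$, so $\vol_\C(D)^{\alpha-\beta}\geq K^{\alpha-\beta}$ holds directly without replacing $K$ by $\min(K,1)$.
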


\begin{proof}
By Proposition \ref{min-unif-volc}, there exists $K>0$ such that $\mvol_\C(D)\geq K$ for all real effective divisors $D$. When $[D]$ is $q$-divisible, we then have
\begin{equation}
\mvol_\R(D)\geq C\vol_\C(D)^\alpha \geq CK^{\alpha-\beta}\vol_\C(D)^{\beta},
\end{equation}
and so $\beta$ is in $\mathcal{A}(X)$ too.
\end{proof}

As a consequence, $\mathcal{A}(X)$ is an interval of the form $[0,\alpha(X)]$ or $[0,\alpha(X))$. By definition, $\alpha(X)$ is the concordance of $X$.

\begin{lemma}
Let $X$ be  a real algebraic variety (with $X(\R)\neq\emptyset$). The concordance~$\alpha(X)$ is in the interval $[0,1]$.
\end{lemma}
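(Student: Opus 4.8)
The plan is to show that the exponent $\alpha = 1$ always lies in $\mathcal{A}(X)$, since together with the preceding lemma (which makes $\mathcal{A}(X)$ downward closed) this forces $\mathcal{A}(X) \supseteq [0,1)$, hence $\alpha(X) = \sup \mathcal{A}(X) \geq 1$; then one concludes $\alpha(X) \in [0,1]$ by comparing with the upper bound coming from Proposition~\ref{mvolr-leq}. Wait — more carefully: Proposition~\ref{mvolr-leq} gives $\mvol_\R(D) \leq C_1 \mvol_\C(D) = C_1\vol_\C(D)$, which is an \emph{upper} bound and does not directly bound $\alpha$. So the real content is: (i) for the lower direction, exhibit a single admissible exponent, and (ii) for the upper direction $\alpha(X) \leq 1$, rule out $\alpha > 1$ using the upper bound of Proposition~\ref{mvolr-leq} against the fact that $\vol_\C(D)$ can be made arbitrarily large within a $q$-divisible ample class family.

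\textbf{Upper bound $\alpha(X) \leq 1$.} Suppose $\alpha \in \mathcal{A}(X)$, so there are $C > 0$, $q \in \N^*$ with $\mvol_\R(D) \geq C\,\vol_\C(D)^\alpha$ for all real ample $D$ with $[D]$ $q$-divisible. Pick one such $D_0$ (these exist: take any real very ample divisor and multiply its class by $q$, or note that $\Amp(X_\R)$ is open and contains lattice points, so it contains $q$-divisible lattice points). For each $n \in \N^*$ the class $n[D_0]$ is again $q$-divisible and ample, represented by a real effective divisor $D_n$ with $[D_n] = n[D_0]$; by formula~(\ref{formule-vol}), $\vol_\C(D_n) = n\,\vol_\C(D_0) \to \infty$. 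Combining the assumed lower bound with the upper bound from Proposition~\ref{mvolr-leq},
\begin{equation}
C\,\bigl(n\,\vol_\C(D_0)\bigr)^\alpha \leq \mvol_\R(D_n) \leq C_1\,\vol_\C(D_n) = C_1\, n\,\vol_\C(D_0).
\end{equation}
Dividing by $n$ and letting $n \to \infty$, if $\alpha > 1$ the left side grows like $n^{\alpha-1} \to \infty$ while the right side is constant, a contradiction. Hence $\alpha \leq 1$, so $\alpha(X) \leq 1$.

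\textbf{Lower bound: $\mathcal{A}(X) \neq \emptyset$ so $\alpha(X) \geq 0$.} By definition $\mathcal{A}(X) \subseteq [0,\infty)$, so any element is $\geq 0$; and $0 \in \mathcal{A}(X)$ trivially, since for $\alpha = 0$ we need $\mvol_\R(D) \geq C$ for all real ample $q$-divisible $D$, and choosing $q$ so that $q$-divisible ample classes are automatically very ample (or large enough that $h^0 \geq 2$) makes $\mvol_\R(D) > 0$ by Proposition~\ref{pinceau}; a uniform positive lower bound then follows because, after fixing $q$, every such class is of the form $q\theta$ with $\theta$ ample, and one can bound $\mvol_\R(q\theta) \geq \mvol_\R(q\theta_0) > 0$ for a fixed minimal ample class $\theta_0$ using superadditivity of $\mvol_\R$ together with the fact that $\theta - \theta_0$ is effective for $\theta$ in the ample cone away from a bounded region — alternatively one simply notes $0 \in \mathcal{A}(X)$ is immediate from $\mvol_\R(D) > 0$ for the (nonempty, by nonemptiness of $X(\R)$ and Bertini) relevant $D$'s together with a compactness/normalization argument on the ray structure. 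In any case $\mathcal{A}(X) \ni 0$, giving $\alpha(X) = \sup \mathcal{A}(X) \geq 0$.

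\textbf{Main obstacle.} The delicate point is the uniform positive lower bound needed to see that $0 \in \mathcal{A}(X)$ — i.e. that $\inf_D \mvol_\R(D) > 0$ over all real ample $q$-divisible $D$ — because the ample cone is noncompact and $\mvol_\R$ could a priori degenerate along some direction to the boundary; the resolution is to choose $q$ large enough that every $q$-divisible ample class $[D]$ dominates a fixed very ample class $[D_0]$ (so $D - D_0$ is effective and real), then invoke superadditivity $\mvol_\R(D) \geq \mvol_\R(D_0) + \mvol_\R(D-D_0) \geq \mvol_\R(D_0) > 0$, the last inequality by Proposition~\ref{pinceau} applied to the very ample $D_0$. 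Everything else is bookkeeping with formula~(\ref{formule-vol}) and Proposition~\ref{mvolr-leq}.
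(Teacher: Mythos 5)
Your upper-bound argument, $\alpha(X)\leq 1$, is correct and is essentially the paper's proof: the paper takes a fixed $D$, replaces $D$ by $kD$, and notes that $C\vol_\C(kD)^\alpha \leq \mvol_\R(kD)\leq C'\vol_\C(kD)$ forces $\alpha\leq 1$ since $\vol_\C(kD)\to\infty$. Your version with $D_n$ of class $n[D_0]$ is the same computation.

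Where you diverge is in spending most of the effort on $0\in\mathcal{A}(X)$, i.e.\ $\inf_D\mvol_\R(D)>0$ over $q$-divisible real ample $D$. The paper simply does not address this in the lemma; it only proves the upper bound (the lower bound $\alpha(X)\geq 0$ is taken as immediate from $\mathcal{A}(X)\subseteq[0,\infty)$, with the empty case implicitly giving $\alpha(X)=0$ via the stated interval form $[0,\alpha(X))$). More importantly, the argument you give for $0\in\mathcal{A}(X)$ has a genuine gap. The key claim — that one can ``choose $q$ large enough that every $q$-divisible ample class $[D]$ dominates a fixed very ample class $[D_0]$, so $D-D_0$ is effective'' — is false in general. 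Write $[D]=q\theta$ with $\theta$ integral ample. When the ample cone has an irrational boundary ray (as happens, e.g., for K3 surfaces with $\rho(X_\R)=2$ and no exceptional curves, precisely the interesting case in this paper), there are sequences of primitive integral ample classes $\theta_n$ with $\theta_n^2$ bounded and direction tending to the boundary ray. Then $\theta_n\cdot[D_0]\to\infty$, so
\begin{equation}
(q\theta_n-[D_0])^2 = q^2\theta_n^2 - 2q\,\theta_n\cdot[D_0] + [D_0]^2 \longrightarrow -\infty,
\end{equation}
and $q\theta_n-[D_0]$ fails to be pseudo-effective for any fixed $q$. So the superadditivity argument does not close. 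Your alternative remarks (``compactness/normalization on the ray structure,'' ``$\theta-\theta_0$ effective away from a bounded region'') suffer from the same problem: the ample cone is a cone, not a compact set, and bounded-norm integral classes can accumulate at the boundary in direction. If you want to record the upper bound only (which is what the paper does here), the proof is fine as is; if you want a uniform positive lower bound for $\mvol_\R$ over $q$-divisible ample classes, a different idea is needed.
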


\begin{proof}
Let $\alpha\in\mathcal{A}(X)$. By Proposition \ref{mvolr-leq}, there exists a positive $C'>0$ such that~$\mvol_\R(D)\leq C'\vol_\C(D)$ for all real ample divisors $D$. When $[D]$ is also $q$-divisible, we obtain, for all $k\in\N^*$,
\begin{equation}
C\vol_\C(kD)^\alpha\leq\mvol_\R(kD)\leq C'\vol_\C(kD).
\end{equation}
If $\alpha>1$, this contradicts $\lim_{k\to+\infty}\vol_\C(kD)=+\infty$.
\end{proof}


\subsection{Examples of varieties with concordance 1}\label{section-alpha=1}

We have seen in the introduction that $\alpha(\P^d_\R)=1$. More generally, the concordance is $1$ when the structure of the nef cone is ``simple''.

\begin{proposition}\label{conenef}
Let $X$ be a real algebraic variety. Assume that the cone $\Nef(X_\R)$ is polyhedral, with extremal rays spanned by classes $[D_j]$ such that $\mvol_\R(D_j)>0$. Then the concordance $\alpha(X)$ is $1$, and it is achieved.
\end{proposition}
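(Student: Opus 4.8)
The plan is to reduce the general ample divisor to a nonnegative integer combination of the generators $D_j$ and then exploit superadditivity of $\mvol_\R$ together with the uniform lower bounds $\mvol_\R(D_j)>0$. First I would observe that since $\Nef(X_\R)$ is a rational polyhedral cone (the $[D_j]$ may be taken in $\NS(X_\R;\Z)$, clearing denominators), there is an integer $q\in\N^*$ such that every class in $q\cdot\NS(X_\R;\Z)$ lying in $\Nef(X_\R)$ can be written as $\sum_j n_j [D_j]$ with all $n_j\in\N$. Indeed, the $\Q$-cone generated by the $[D_j]$ is all of $\Nef(X_\R)\otimes\Q$, so any $q$-divisible integral nef class is a nonnegative \emph{rational} combination, and a further uniform multiple makes the coefficients integral; a standard Gordan-type argument gives a single $q$ that works for all such classes simultaneously. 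Then, given a real ample divisor $D$ with $[D]$ $q$-divisible, I would choose, for each $j$ with $n_j>0$, a real effective divisor $D_j'\in\mathcal{V}(D_j)$ with $\vol_\R(D_j')=\mvol_\R(D_j)>0$; the divisor $\sum_j n_j D_j'$ is real, effective, and numerically equivalent to $D$, so it lies in $\mathcal{V}(D)$.

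Next, using superadditivity of $\mvol_\R$ on real effective divisors (established just before Proposition \ref{pinceau}) and the inequality $\mvol_\R(kE)\geq k\,\mvol_\R(E)$, I would estimate
\begin{equation}
\mvol_\R(D)\;\geq\;\vol_\R\Bigl(\sum_j n_j D_j'\Bigr)\;\geq\;\sum_j n_j\,\mvol_\R(D_j)\;\geq\;\Bigl(\min_{j}\mvol_\R(D_j)\Bigr)\sum_j n_j.
\end{equation}
On the other hand, $\vol_\C(D)=\frac{1}{(d-1)!}[\kappa^{d-1}]\cdot[D]=\sum_j n_j\,\vol_\C(D_j)\leq\bigl(\max_j\vol_\C(D_j)\bigr)\sum_j n_j$ by linearity of the complex volume (formula (\ref{formule-vol})) in the Chern class. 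Combining the two displays gives
\begin{equation}
\mvol_\R(D)\;\geq\;\frac{\min_j\mvol_\R(D_j)}{\max_j\vol_\C(D_j)}\;\vol_\C(D),
\end{equation}
which is precisely inequality (\ref{eq-alpha}) with exponent $\alpha=1$ and constant $C=\bigl(\min_j\mvol_\R(D_j)\bigr)/\bigl(\max_j\vol_\C(D_j)\bigr)>0$. Hence $1\in\mathcal{A}(X)$; since $\mathcal{A}(X)\subset[0,1]$, we conclude $\alpha(X)=1$ and the concordance is achieved.

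The main obstacle is the first step: producing a \emph{single} integer $q$ such that \emph{every} $q$-divisible integral class in $\Nef(X_\R)$ is a nonnegative integer combination of the fixed generators $[D_j]$. Getting nonnegative rational coefficients is immediate from polyhedrality, and one needs to check that the denominators arising from the (finitely many) facets and the lattice are bounded, so that a common $q$ clears them all; this is where the hypothesis that $\Nef(X_\R)$ is polyhedral (finitely many extremal rays) is essential. The rest is a routine combination of superadditivity and the linearity of $\vol_\C$ in cohomology.
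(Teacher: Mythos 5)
Your proof is correct and follows essentially the same route as the paper's: decompose a $q$-divisible ample class as a nonnegative integer combination $\sum_j n_j[D_j]$, then combine superadditivity of $\mvol_\R$ with linearity of $\vol_\C$ in the Chern class. The one place you are actually \emph{more} careful than the paper is the choice of $q$: the paper takes $q$ to be the index of $\sum_j \Z[D_j]$ in $\NS(X_\R;\Z)$ and asserts nonnegativity of the integer coefficients directly, which is immediate when the number of extremal rays equals $\rho(X_\R)$ but requires a further argument (of the Gordan/Hilbert-basis type you invoke) when there are more rays than the rank; your version handles that case cleanly, and your slightly different constant $C=\min_j\mvol_\R(D_j)/\max_j\vol_\C(D_j)$ (versus the paper's $\min_j\bigl(\mvol_\R(D_j)/\vol_\C(D_j)\bigr)$) works equally well.
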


\begin{proof}
Define $C=\min_j\left(\mvol_\R(D_j)/\vol_\C(D_j)\right)>0$. The classes $[D_j]$ span a finite index subgroup of $\NS(X_\R;\Z)$. Denote by $q$ this index. Since $\Amp(X_\R)\subset\Nef(X_\R)$, every real ample divisor $D$ with $[D]$ $q$-divisible is equivalent to a divisor of the form~$\sum_j k_jD_j$, where the $k_j$'s are nonnegative integers. Hence
\begin{equation}
\mvol_\R(D) \geq \sum_jk_j\mvol_\R(D_j)
\geq C\sum_jk_j\vol_\C(D_j)
= C\vol_\C(D).
\end{equation}
We can then conclude that $1$ is contained in $\mathcal{A}(X)$.
\end{proof}

\begin{corollary}\label{rho=1}
All real algebraic varieties $X$ with $\rho(X_\R)=1$ have concordance $1$, and this one is achieved.
\end{corollary}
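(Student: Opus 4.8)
The plan is to derive this as an immediate consequence of Proposition \ref{conenef}. The key observation is that when $\rho(X_\R)=1$, the real N\'eron--Severi space $\NS(X_\R;\R)$ is one-dimensional, so the nef cone $\Nef(X_\R)$ is automatically polyhedral: it is simply a half-line (ray) inside this line. Concretely, pick any ample real divisor $H$ on $X$ — such a divisor exists because $X$ is projective — and then $[H]$ (or a suitable positive multiple) spans the unique extremal ray of $\Nef(X_\R)$.

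It then remains to check the hypothesis of Proposition \ref{conenef}, namely that the generator $D_j=H$ of the extremal ray satisfies $\mvol_\R(H)>0$. For this I would invoke Proposition \ref{pinceau}: after replacing $H$ by a large enough multiple $mH$, the divisor $mH$ is very ample, hence $h^0(X,\O_X(mH))\geq 2$, so $\mvol_\R(mH)>0$. Since $mH$ still spans the same ray of $\Nef(X_\R)$, we may take it as our $D_j$. (Alternatively one notes directly that a very ample divisor moves in a pencil through any real point.) With this single generator having positive real modified volume, Proposition \ref{conenef} applies verbatim and yields $\alpha(X)=1$, achieved.

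I expect no genuine obstacle here; the only mild points of care are: (i) making sure the generator of the ray can be chosen with $\mvol_\R>0$, which is precisely what Proposition \ref{pinceau} (via very ampleness) guarantees; and (ii) confirming that a polyhedral-with-one-ray cone is a legitimate instance of the "polyhedral cone with finitely many extremal rays" hypothesis of Proposition \ref{conenef}, which it trivially is. So the proof reduces to two sentences: in Picard number one the nef cone is a ray, spanned by (a multiple of) an ample divisor with positive real volume by Proposition \ref{pinceau}, and then Proposition \ref{conenef} finishes the argument.
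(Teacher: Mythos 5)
Your proposal is correct and matches the paper's intended argument: the corollary is stated as an immediate consequence of Proposition~\ref{conenef}, and your reasoning — that with $\rho(X_\R)=1$ the nef cone is a single ray spanned by a (multiple of an) ample divisor with $\mvol_\R>0$ thanks to Proposition~\ref{pinceau} — is precisely the paper's implicit justification.
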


\begin{corollary}\label{DP}
Let $X$ be a real Del Pezzo surface. The concordance of $X$ is $1$, and it is achieved.
\end{corollary}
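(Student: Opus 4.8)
Corollary \ref{DP} states that real Del Pezzo surfaces have concordance $1$, and this is achieved.

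The plan is to deduce the statement from Proposition \ref{conenef}. So I must check that, for a real Del Pezzo surface $X$, the cone $\Nef(X_\R)$ is polyhedral and that each of its extremal rays is spanned by the class of a real effective divisor of positive real volume.

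\emph{Polyhedrality of $\Nef(X_\R)$.} Since $X$ is Del Pezzo, $-K_X$ is ample, so the Cone Theorem of Mori theory makes the cone of curves $\b{\rm NE}(X_\C)$ rational polyhedral; dually, $\Nef(X_\C)$ is rational polyhedral. Ampleness being a geometric notion, $\Amp(X_\R)=\Amp(X_\C)\cap\NS(X_\R;\R)$, and passing to closures (both cones are open and convex and meet the rational subspace $\NS(X_\R;\R)$) one gets $\Nef(X_\R)=\Nef(X_\C)\cap\NS(X_\R;\R)$, which is the intersection of a rational polyhedral cone with a rational linear subspace, hence again rational polyhedral. (This may also be phrased via the sign relation (\ref{pb-sign}), which shows $-\sigma^*$ stabilises $\Nef(X_\C)$ and restricts to the identity on $\NS(X_\R;\R)$.)

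\emph{Positivity of $\mvol_\R$ on the extremal rays.} Let $\theta$ span an extremal ray of $\Nef(X_\R)$, chosen integral and primitive; it is a nonzero nef class. On a Del Pezzo surface $\chi(\O_X)=1$, and the class $\theta-K_X=\theta+(-K_X)$ is ample, so Serre duality gives $h^2(X,\O_X(\theta))=h^0(X,\O_X(K_X-\theta))=0$, since an anti-ample class is not effective. As $\theta$ is nef and nonzero, $\theta^2\geq 0$ and $\theta\cdot(-K_X)\geq 1$ (a nonzero nef class meets an ample one in a strictly positive integer, by the Hodge index theorem), so Riemann--Roch yields
\begin{equation}
h^0(X,\O_X(\theta))\ \geq\ \chi(\O_X(\theta))\ =\ 1+\tfrac12\,\theta\cdot(\theta-K_X)\ \geq\ 2.
\end{equation}
Now $\theta\in\NS(X_\R;\Z)$ is the class of a real divisor $D_0$, and the real structure endows $H^0(X,\O_X(D_0))$, of complex dimension $\geq 2$, with a conjugate-linear involution; hence the linear system $|D_0|$ is a projective space of positive dimension carrying real points, i.e. there is a real effective divisor $D_\theta$ with $[D_\theta]=\theta$ and $h^0(X,\O_X(D_\theta))\geq 2$. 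Proposition \ref{pinceau} then gives $\mvol_\R(D_\theta)>0$.

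Taking these divisors $D_\theta$ as the generators $D_j$, Proposition \ref{conenef} applies and shows that $\alpha(X)=1$ and that the concordance is achieved. The only step requiring genuine care is the last one, namely securing $\mvol_\R(D_\theta)>0$ along the extremal rays; this combines the Riemann--Roch estimate above with the passage from a real divisor class possessing a pencil to an actual real effective divisor carrying a real pencil. Everything else is standard input from Mori theory and convex geometry.
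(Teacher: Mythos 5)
Your proof is correct and follows essentially the same route as the paper's: polyhedrality of $\Nef(X_\R)$ via the cone theorem, Riemann--Roch together with Serre duality to show each nef nonzero class carries a pencil, then Propositions \ref{pinceau} and \ref{conenef}. The only small variations are cosmetic: the paper invokes the cone theorem directly for $\b{\rm NE}(X_\R)$ rather than passing through $\Nef(X_\C)\cap\NS(X_\R;\R)$, and you make explicit the real-structure argument on $H^0$ to extract a real member of the pencil, a point the paper handles inside the proof of Proposition \ref{pinceau}.
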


\begin{proof}[Proof of Corollary \ref{DP}]
By definition, a surface is Del Pezzo when its anticanonical divisor $-K_X$ is ample. The cone of curves $\b{\rm NE}(X_\R)$ is then rational polyhedral by the cone theorem (see \cite[1.5.33, 1.5.34]{lazarsfeld}). Thus its dual cone $\Nef(X_\R)$ is also rational polyhedral. 

\begin{lemma}\label{nef}
Let $D$ be a nef divisor on a Del Pezzo surface $X$, which is not numerically trivial. Then the linear system $|D|$ contains a pencil.
\end{lemma}

\begin{proof}
The proof is a simple application of the Riemann--Roch formula:
\begin{equation}
h^0(X,\O_X(D))-h^1(X,\O_X(D))+h^2(X,\O_X(D))=\chi(\O_X)+\frac{1}{2}(-K_X\cdot D+D^2).
\end{equation}

As $-K_X$ is ample and $D$ is nef, $-K_X\cdot D>0$ and $D^2\geq 0$. By Serre duality, we get $h^2(X,\O_X(D))=h^0(X,\O_X(K_X-D))=0$, because $D\cdot(K_X-D)<0$ with~$D$ nef. We conclude that $h^0(X,\O_X(D))>\chi(\O_X)=1$ (the last equality follows from the rationality of $X$).
\end{proof}

Consequently, we see, by Proposition \ref{pinceau}, that the extremal rays of $\Nef(X_\R)$ are spanned by classes $[D_j]$ with $\mvol_\R(D_j)>0$, and thus we can apply Proposition~\ref{conenef} to get the desired result.
\end{proof}



\section{Concordance and Entropy of Automorphisms}\label{section-entropie}

From now on $X$ is a real algebraic surface equipped with a K\"ahler metric whose K\"ahler form is denoted by $\kappa$.

For any differentiable dynamical system $g:M\to M$ on a compact Riemannian manifold, let $\h(g)$ denote the \emph{topological entropy}, and $\liap(g)$ the \emph{topological Liapunov exponent}, that is,
\begin{equation}
\liap(g)=\lim_{n\rightarrow+\infty}\frac{1}{n}\log\|{\rm D}g^n\|_\infty,
\end{equation}
where the notation $\|{\rm D}g\|_\infty$ stands for $\max_{x\in M}\|{\rm D}g(x)\|$, the norm being taken with respect to the Riemannian metric. Note that the number $\liap(g)$ does not depend on the choice of the Riemannian metric.

When $f\in\Aut(X_\R)$ is a real automorphism of $X$, we are going to look at both differentiable dynamical systems $f_\C:X(\C)\to X(\C)$ and $f_\R:X(\R)\to X(\R)$.

We denote by $f_*$ the inverse of the map $f^*$ induced by $f$ on ${\rm H}^2(X(\C);\Z)$, so that the operation $f\to f_*$ is covariant. The linear map $f_*$ is an isometry for the intersection form and preserves the Hodge structure: we say it is a \emph{Hodge isometry}. Furthermore it is also compatible with the direct image of divisors $D$, which means that $f_*[D]=[f_*D]$. Hence $f_*$ preserves the subgroups $\NS(X_\K;\Z)$ for $\K=\R$ or $\C$. We still denote by $f_*$ the restriction of $f_*$ to all the subgroups or subspaces (when extended by $\R$ or $\C$) that are preserved.

The spectral radius of $f_*$ (\emph{a priori} on ${\rm H}^2(X(\C);\R)$) is denoted by $\lambda(f)$. By the theorem of Gromov and Yomdin recalled in the introduction, we have
\begin{equation}
\h(f_\C)=\log(\lambda(f)).
\end{equation}
This spectral radius is actually achieved on the subspace $\NS(X_\R;\R)$ (cf Remark~\ref{rayon-n1}).


\subsection{Complex volume of the iterates of a divisor}

\begin{theorem}\label{iteres-mvolc}
Let $f$ be an automorphism of a complex algebraic surface $X$. For all ample divisors $D$ we have
\begin{equation}\label{ent-c}
\lim_{n\to+\infty}\frac{1}{n}\log\left(\vol_\C(f_*^nD)\right)=\h(f_\C)=\log(\lambda(f)).
\end{equation}
\end{theorem}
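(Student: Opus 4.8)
The plan is to use the formula $\vol_\C(D)=\frac{1}{(d-1)!}[\kappa^{d-1}]\cdot[D]$ from Equation (\ref{formule-vol}), which for a surface ($d=2$) reads $\vol_\C(D)=[\kappa]\cdot[D]$. Thus $\vol_\C(f_*^nD)=[\kappa]\cdot f_*^n[D]$, and the problem reduces to understanding the growth rate of the linear iterates $f_*^n[D]$ paired against the fixed class $[\kappa]$. The key point is that $[\kappa]$ lies in (the interior of) the positive cone $\Pos(X_\C)$, and so does $[D]$ since $D$ is ample; so we are pairing a vector in the ample cone with its forward orbit under the Hodge isometry $f_*$, both measured against another ample class.

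The main technical input is the spectral analysis of $f_*$ acting on $\NS(X_\C;\R)$, which carries the intersection form of signature $(1,\rho-1)$. First I would recall that $\lambda=\lambda(f)$ is the spectral radius of $f_*$ and that, by the theory of isometries of a Lorentzian lattice, if $\lambda>1$ then $\lambda$ is a simple eigenvalue with a nef eigenvector $\theta_+$ (with $\theta_+^2=0$ when $\lambda>1$, or $\theta_+$ spanning a ray in $\Pos$), $1/\lambda$ is the other distinguished eigenvalue with eigenvector $\theta_-$, and all remaining eigenvalues have modulus $1$; moreover $\theta_+\cdot\theta_->0$. Decomposing $[D]=a\,\theta_+ + b\,\theta_- + (\text{rest})$, one has $a=[D]\cdot\theta_-/(\theta_+\cdot\theta_-)>0$ because $[D]$ is ample and $\theta_-$ is nef and nonzero. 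Then $f_*^n[D]=a\lambda^n\theta_+ + b\lambda^{-n}\theta_- + O(n^{k}\cdot 1^n)$ for some bounded polynomial factor from possible Jordan blocks on the modulus-one part. Pairing with $[\kappa]$ gives $\vol_\C(f_*^nD)=a\lambda^n([\kappa]\cdot\theta_+) + o(\lambda^n)$, and $[\kappa]\cdot\theta_+>0$ since $[\kappa]$ is ample (hence strictly positive against any nonzero nef class). Taking $\frac{1}{n}\log$ yields the limit $\log\lambda=\h(f_\C)$. The case $\lambda=1$ (parabolic or elliptic $f_*$) must be handled separately: then $\h(f_\C)=0$, and $\vol_\C(f_*^nD)$ grows at most polynomially in $n$ (from Jordan blocks, all eigenvalues having modulus $1$), so $\frac{1}{n}\log$ of it tends to $0$; one still needs the lower bound $\vol_\C(f_*^nD)\geq K>0$ from Proposition \ref{min-unif-volc}, which prevents the $\log$ from running off to $-\infty$.

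The step I expect to be the main obstacle is making rigorous the claim that the coefficient of $\theta_+$ in the expansion of $[D]$ is strictly positive and that $[\kappa]\cdot\theta_+>0$ — i.e., that no cancellation occurs at the top order. This is where the positivity of the intersection pairing between the ample classes $[D]$, $[\kappa]$ and the nef eigenvectors $\theta_\pm$ is essential, together with the fact (Hodge index theorem plus the structure of isometries of a hyperbolic lattice) that $\theta_+$ is genuinely in the closure of the positive cone and not, say, orthogonal to it. A secondary subtlety is the possibility of a nontrivial Jordan block for $f_*$ on the unit circle, which contributes a polynomial factor $n^k$; this is harmless for the $\frac1n\log$ limit but needs to be acknowledged rather than silently dropped. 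Once these positivity facts are in place, the computation of the limit is routine.
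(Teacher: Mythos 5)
Your proof is correct and, while it starts from the same Wirtinger identity $\vol_\C(f_*^nD)=f_*^n[D]\cdot[\kappa]$, it proceeds by a genuinely different route than the paper. The paper, for $\lambda(f)>1$, cites the dynamical result of Cantat that $f_*^n[D]/\lambda^n$ converges to the class of a nonzero positive closed current, and simply pairs that limit against $[\kappa]$; you instead do the linear algebra of Lorentzian isometries directly on $\NS(X_\C;\R)$. Your decomposition $[D]=a\theta_++b\theta_-+(\text{rest})$, with $\theta_\pm$ isotropic and $\theta_+\cdot\theta_->0$, and the positivity $a=[D]\cdot\theta_-/(\theta_+\cdot\theta_-)>0$, $[\kappa]\cdot\theta_+>0$ (both coming from ampleness of $[D]$ and $[\kappa]$ against the nonzero nef classes $\theta_\mp$ on the boundary of the positive cone) is exactly the content that the cited result packages; your version is more elementary and self-contained, while the paper's is shorter. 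Two small remarks. First, when $\lambda(f)>1$, no Jordan blocks can actually occur on the modulus-one part: the orthogonal complement of $\R\theta_+\oplus\R\theta_-$ is negative definite, so $f_*$ restricted there lies in a compact orthogonal group and is diagonalizable; your allowance for a polynomial factor $n^k$ is thus vacuous but harmless. Second, in the $\lambda(f)=1$ case you rightly note that the lower bound $\vol_\C\geq K>0$ from Proposition~\ref{min-unif-volc} is needed to keep the logarithm from drifting to $-\infty$, a point the paper leaves implicit; the paper instead invokes Gizatullin's quadratic bound on $\|f_*^n\|$, but any polynomial upper bound plus the uniform lower bound suffices, as you observe.
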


\begin{proof}
Wirtinger's equality gives (cf (\ref{formule-vol})) 
\begin{equation}
\vol_\C(f_*^nD)=f_*^n[D]\cdot[\kappa].
\end{equation}

If $\lambda(f) = 1$, the sequence $\left(\|f_*^n[D]\|\right)_{n\in\N}$ has at most a polynomial growth (actually it is at most quadratic \cite{gizatullin}), as well as $\left(\vol_\C(f_*^nD)\right)_{n\in\N}$. Hence
\begin{equation}
\lim_{n\to+\infty}\frac{1}{n}\log\left(\vol_\C(f_*^nD)\right)=0=\log(\lambda(f)).
\end{equation}

If $\lambda(f)>1$, since $[D]$ is in the ample cone, the sequence $\left(\frac{f_*^n[D]}{\lambda(f)^n}\right)_{n\in\N}$ converges to the class $\theta$ of a positive closed current, by \cite{cantat-k3}. In particular
\begin{equation}\label{volc-borne}
\lim_{n\to+\infty}\frac{\vol_\C(f_*^nD)}{\lambda(f)^n}=\theta\cdot[\kappa]>0,
\end{equation}
and then $\lim_{n\to+\infty}\frac{1}{n}\log\left(\vol_\C(f_*^nD)\right)=\log(\lambda(f))$.
\end{proof}

\begin{rema}\label{rayon-n1}
If, moreover, the surface $X$, the automorphism $f$, and the divisor $D$ are defined over $\R$, then the class $\theta$ is in  $\NS(X_\R;\R)$ (as a limit of classes that are in this closed subspace), and satisfies $f_*\theta=\lambda(f)\theta$. Thus $\lambda(f)$ is an eigenvalue of~${f_*}$ restricted to $\NS(X_\R;\R)$.
\end{rema}

\begin{rema}
For varieties that have arbitrary dimension $d$, the formula
\begin{equation}
\lim_{n\to+\infty}\frac{1}{n}\log\left(\vol_\C(f_*^nD)\right)=\log(\lambda(f))
\end{equation}
still holds. But this is not necessarily equal to the entropy, which is the spectral logradius\footnote{Recall that \emph{spectral logradius} stands for the logarithm of the spectral radius.} on the whole cohomology, \emph{a priori} distinct from the spectral logradius~$\log(\lambda(f))$ on ${\rm H}^2(X(\C);\R)$.
\end{rema}


\subsection{An upper bound for real volume of the iterates of a divisor}

\begin{theorem}\label{iteres-mvolr}
Let $f$ be a real automorphism of a real algebraic surface $X$. For all ample real divisors $D$ we have
\begin{equation}\label{ent-r}
\limsup_{n\to+\infty}\frac{1}{n}\log\left(\mvol_\R(f_*^nD)\right)\leq\h(f_\R).
\end{equation}
\end{theorem}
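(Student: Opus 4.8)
The plan is to bound the real volume $\vol_\R(D')$ of any real effective divisor $D'$ in the class $f_*^n[D]$ by comparing it with the volume of a fixed divisor pulled back by an iterate of $f_\R$, and then to invoke Yomdin's theorem relating the volume growth of submanifolds under iteration to the topological entropy. The key point is that $\mvol_\R(f_*^nD)$ is, up to the choice of the representative $D'$, the $(d-1)$-dimensional Riemannian volume of a real algebraic hypersurface in $X(\R)$, and that one wants to say such a hypersurface cannot grow faster than $e^{n\h(f_\R)}$.

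First I would set up the following reduction. Fix once and for all a real ample divisor $D_0$ and a real effective divisor $D_0'$ in its class with $\vol_\R(D_0')>0$ (possible by Proposition \ref{pinceau}, since an ample divisor has a pencil after passing to a multiple; replacing $D$ by a multiple does not change the $\limsup$ in \eqref{ent-r}). The subtlety is that $f_*^n[D]$ is not literally $(f_\R)_*$ applied to a subvariety — we are only moving cohomology classes. So instead I would argue on $X(\C)$ first: by Theorem \ref{iteres-mvolc} the complex volume $\vol_\C(f_*^nD)$ grows like $\lambda(f)^n=e^{n\h(f_\C)}$, but that is the \emph{wrong} (too large) bound. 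To get $\h(f_\R)$ one must use that the \emph{real} locus is what is being measured. The natural device is Yomdin's theorem in the following form: for a $C^\infty$ map $g$ of a compact manifold $M$ and any compact $C^\infty$ submanifold (or more generally semialgebraic subset) $W\subset M$,
\begin{equation}
\limsup_{n\to\infty}\frac1n\log\vol(g^n(W))\le\h(g).
\end{equation}
Here $g=f_\R$, $M=X(\R)$.

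So the heart of the argument is to produce, for each $n$ and each $D'\in\mathcal V(f_*^nD)$ realizing (or nearly realizing) $\mvol_\R(f_*^nD)$, a semialgebraic subset $W$ of bounded complexity in $X(\R)$ such that $D'(\R)\subset f_\R^n(W)$, or at least $\vol_\R(D')\le C\,\vol(f_\R^n(W))$ with $C$ independent of $n$. The clean way: since $f_*^nD$ and $D$ differ by $f_*^n$, one has $f^{-n}(D'(\C))$ is a divisor in the class $[D]$, hence (after controlling which representative) lies in a bounded family; its real locus has volume bounded by a constant $C$, and $D'(\R)=f_\R^n\big(f_\R^{-n}(D'(\R))\big)\subset f_\R^n(W)$ where $W=f_\R^{-n}(D'(\R))$ is contained in the real locus of a divisor in class $[D]$. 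But $W$ itself varies with $n$, so I would instead take $W$ to be the real locus of the \emph{generic} member, or better: the union over the (finite-dimensional, bounded-degree) linear system $|qD|$ of real members, which is a fixed semialgebraic set $W_0\subset X(\R)$ of bounded complexity containing $f_\R^{-n}(D'(\R))$ for every relevant $D'$. Then $\mvol_\R(f_*^nD)=\vol_\R(D')\le\vol\big(f_\R^n(W_0)\big)$ and Yomdin gives the bound. The main obstacle, and the step needing the most care, is precisely this: showing that all the real effective divisors $D'$ in the class $f_*^n[D]$ have $f^{-n}$-preimages lying inside one \emph{fixed} semialgebraic set of bounded complexity — i.e. controlling the representatives uniformly in $n$ — and verifying that Yomdin's volume-growth estimate applies to this (possibly singular, but semialgebraic) set rather than to a smooth submanifold. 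A secondary point is the passage from $q$-divisible classes to $\mvol_\R$ of a single pencil member, which uses the superadditivity of $\mvol_\R$ noted in \S\ref{pos-mvolr}.
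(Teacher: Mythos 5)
Your overall strategy is in the right spirit and close to the paper's: observe that $\mathcal V(f_*^nD)=f_*^n\mathcal V(D)$ (push-forward by an automorphism preserves numerical equivalence), so that $\mvol_\R(f_*^nD)=\sup_{D'\in\mathcal V(D)}\lo\bigl(f_\R^n(D'(\R))\bigr)$, and then invoke Yomdin to bound the exponential growth rate of these lengths. But the concrete device you propose to handle the step you yourself flag as the main obstacle does not work, and the fix is the one technical idea that makes the proof go.

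You want a single fixed set $W_0$ with $f_\R^{-n}(D'(\R))\subset W_0$ for all relevant $D'$ and $n$, so that $\vol_\R(D')\le\vol\bigl(f_\R^n(W_0)\bigr)$. The union of the real loci of all real members of $|qD|$ is $2$-dimensional (for ample $qD$ the system sweeps out $X(\R)$ away from a proper subvariety), so $\vol(f_\R^n(W_0))$ is then a $2$-dimensional volume, bounded by $\vol(X(\R))$ independently of $n$ --- this cannot dominate the $1$-dimensional length $\vol_\R(D')$, which is expected to grow like $e^{n\h(f_\R)}$. Your other option, a single generic member of the pencil, is $1$-dimensional but contains only one of the infinitely many preimages $f_\R^{-n}(D'(\R))$. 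So neither variant of $W_0$ exists, and "Yomdin for one semialgebraic set" is the wrong tool.

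The paper resolves this with two ingredients your draft is missing. First, it fixes an embedding $X\subset\P^d_\R$ and applies Gromov's algebraic lemma (Lemma \ref{lemme-gromov}) to obtain, for \emph{each} $D'\in\mathcal V(D)$, a covering of $D'(\R)$ by at most $m_1$ arcs $\gamma_{D',j}:[0,1]\to X(\R)$ of class $C^r$ with $\|\gamma_{D',j}^{(k)}\|_\infty\le K$ for $k\le r$; crucially both $m_1$ and $K$ depend only on $d$, $r$, and $\deg(D)$ (which is constant on the class $[D]$), hence are uniform over the infinite family $\mathcal V(D)$. Second, it applies not the single-set Yomdin inequality you quote but its \emph{family} version, inequality \eqref{yomdin-famille}, whose hypothesis is exactly a uniform $C^r$ bound on a family of arcs. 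This yields $\limsup_n\frac1n\log\mvol_\R(f_*^nD)\le\h(f_\R)+\frac2r\liap(f_\R)$, and one concludes by letting $r\to\infty$ using the $C^\infty$ regularity of $f_\R$. The uniform Gromov decomposition plus the family form of Yomdin are what replace your nonexistent fixed set $W_0$; without stating them, the argument does not close.
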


The proof of this result relies on \cite[Theorem 1.4]{yomdin}, which gives a lower bound for entropy in terms of volume growth. It is here stated in the particular case of dimension $1$ submanifolds.

\begin{theorem}[Yomdin]\label{yomdin}
Let $M$ be a compact Riemannian manifold, $g:M\rightarrow M$ be a differentiable map and $\gamma:[0,1]\to M$ be an arc, each of class $C^r$, with $r\geq 1$. Then \begin{equation}
\limsup_{n\rightarrow+\infty}\frac{1}{n}\log\left(\lo(g^n\circ\gamma)\right)\leq\h(g)+\frac{2}{r}\liap(g).
\end{equation}
In particular when the regularity is $C^\infty$, then
\begin{equation}
\limsup_{n\rightarrow+\infty}\frac{1}{n}\log\left(\lo(g^n\circ\gamma)\right)\leq\h(g).
\end{equation}
\end{theorem}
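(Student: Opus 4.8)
\textbf{Proof plan for Theorem \ref{iteres-mvolr}.}

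The plan is to reduce the statement about the growth of $\mvol_\R(f_*^nD)$ to the growth of the lengths of finitely many smooth arcs in $X(\R)$, and then apply Yomdin's Theorem \ref{yomdin}. First I would fix, once and for all, a very ample real divisor $D_0$; since $D$ is ample, after replacing $D$ by a positive multiple (which does not affect the $\limsup$) we may assume $D - D_0$ is again effective, indeed that $D$ itself is very ample. Choosing a real basis of the linear system $|D|$ gives a projective embedding $\iota : X \hookrightarrow \P^N_\R$ defined over $\R$, in which the hyperplane sections are precisely the members of $|D|$. The key reduction is then: for a divisor of the form $f_*^n D$, realize it (up to the equivalence allowed in the definition of $\mvol_\R$) as $(f^n)^{-1}$ applied to a well-chosen real hyperplane section $H_n$ of $\iota(X)$, so that $\vol_\R$ of this representative is the $1$-dimensional Riemannian volume of $f_\R^{-n}(H_n \cap X(\R))$.

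Second, I would control the geometry of the curves $H_n \cap X(\R)$ uniformly in $n$. The real points of a hyperplane section of $\iota(X) \subset \P^N_\R$ form a real algebraic curve in $X(\R)$ of bounded degree, hence can be covered by a bounded number $K$ (independent of $n$ and of the hyperplane) of smooth arcs $\gamma_{n,1},\dots,\gamma_{n,K} : [0,1] \to X(\R)$, each of class $C^\infty$, with the length of each $\gamma_{n,j}$ bounded by a constant $L$ independent of $n$: this follows from the fact that the space of hyperplane sections is a compact projective space, together with standard facts on the total curvature / length of real algebraic curves of fixed degree in a fixed compact manifold (Cauchy--Crofton, as already invoked in the proof of Proposition \ref{prop-proj}, or simply a compactness argument on the Chow variety). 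Concretely, one can even take the $H_n$ to lie in a \emph{fixed} finite set of hyperplanes, e.g. the coordinate hyperplanes and their real combinations, so that the arcs $\gamma_{n,j}$ are drawn from a finite list $\gamma_1,\dots,\gamma_K$ depending only on $X$, $D$ and $\iota$, not on $n$.

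Third, with this in hand, write $\mvol_\R(f_*^n D) \le \vol_\R\bigl(f_\R^{-n}(H \cap X(\R))\bigr) \le \sum_{j=1}^K \lo\bigl(f_\R^{-n} \circ \gamma_j\bigr)$ for the appropriate fixed arcs $\gamma_j$. Taking $\tfrac1n\log$, using that a maximum of finitely many terms has the same exponential growth as the largest, and applying Theorem \ref{yomdin} in the $C^\infty$ case to $g = f_\R^{-1}$ (a $C^\infty$ self-map of the compact Riemannian manifold $X(\R)$) on each $\gamma_j$, gives
\begin{equation}
\limsup_{n\to+\infty}\frac1n\log\bigl(\mvol_\R(f_*^n D)\bigr) \le \max_j \limsup_{n\to+\infty}\frac1n\log\bigl(\lo(f_\R^{-n}\circ\gamma_j)\bigr) \le \h\bigl((f_\R)^{-1}\bigr) = \h(f_\R),
\end{equation}
the last equality because topological entropy is invariant under taking inverses of homeomorphisms. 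The main obstacle is the second step: making the ``bounded number of bounded-length smooth arcs, uniformly in $n$'' statement precise. The cleanest route is to arrange that only finitely many hyperplane sections actually occur (by choosing the representatives of $f_*^nD$ inside a fixed pencil or fixed finite family of real divisors, exploiting that $f_*$ permutes a lattice and one only needs \emph{some} representative in the numerical class), reducing the uniformity to the trivial observation that a finite set of fixed arcs needs no uniform bound; care is needed to check that such fixed representatives indeed exist in each class $f_*^n[D]$, which is where the ampleness of $D$ and the fact that $|mD|$ is very ample for $m$ large are used.
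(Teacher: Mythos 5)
The statement you were asked to prove is Theorem \ref{yomdin} itself, i.e.\ Yomdin's bound $\limsup_{n}\frac1n\log\lo(g^n\circ\gamma)\leq\h(g)+\frac2r\liap(g)$ for a $C^r$ map and a $C^r$ arc. Your proposal does not address this statement: it is a proof plan for the downstream result, Theorem \ref{iteres-mvolr}, and its central step is precisely an application of Theorem \ref{yomdin} (``applying Theorem \ref{yomdin} in the $C^\infty$ case to $g=f_\R^{-1}$''). Relative to the assigned statement this is circular, or more plainly, it is a proof of the wrong theorem. In the paper, Theorem \ref{yomdin} is not reproved: it is quoted from \cite{yomdin} (Theorem 1.4 there, specialized to one-dimensional submanifolds), together with the observation that Yomdin's argument extends to a family of $C^r$ arcs with uniformly bounded derivatives, giving (\ref{yomdin-famille}). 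A genuine proof of the statement would have to enter Yomdin's machinery (controlled $C^r$ reparametrization of the sets $g^{-n}$-pulled back along $\gamma$, counted against $(n,\epsilon)$-separated sets), none of which appears in your plan.

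Even read as a proposal for Theorem \ref{iteres-mvolr}, the plan has a real gap at the ``second step'' you flag. The quantity $\mvol_\R(f_*^nD)$ is a \emph{maximum} over all real effective divisors $D'$ in the class $f_*^n[D]$, equivalently over all $D''$ in the class $[D]$ pushed forward by $f^n$; this is an infinite family (at least the real points of the projective space $|D|$, and in general all numerically equivalent effective divisors). So the inequality $\mvol_\R(f_*^nD)\leq\vol_\R\bigl(f_\R^{-n}(H\cap X(\R))\bigr)$ for one well-chosen hyperplane section $H$ goes the wrong way: to bound a maximum you must bound \emph{every} representative, and reducing to a fixed finite list of hyperplanes (``one only needs some representative in the numerical class'') is not available. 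This uniformity over the whole family is exactly what the paper extracts from Gromov's algebraic lemma (Lemma \ref{lemme-gromov}): each member, having bounded degree in a fixed embedding $X\subset\P^d_\R$, is covered by at most $m_0$ arcs with derivatives bounded by a constant up to order $r$, uniformly in the member, after which the family version (\ref{yomdin-famille}) applies and one lets $r\to\infty$. Note also that a bound on lengths (via Cauchy--Crofton or compactness of the Chow variety) is not sufficient: Yomdin's estimate for finite $r$ requires control of the derivatives up to order $r$, not just of the $C^0$ or length data.
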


Looking carefully at the proof in Yomdin \cite{yomdin}, one sees that this result can be improved to the case when we consider a family of $C^r$-arcs $(\gamma_j)_j$ whose derivatives are uniformly bounded to the order $r$, that is, there is a positive number $K$ such that~$\|\gamma_j^{(k)}(t)\|\leq K$ for all $j$, $t\in [0,1]$ and $k\leq r$. Under these assumptions we have
\begin{equation}\label{yomdin-famille}
\limsup_{n\rightarrow+\infty}\frac{1}{n}\log\left(\max_j\left\{\lo(g^n\circ\gamma_j)\right\}\right)\leq\h(g)+\frac{2}{r}\liap(g).
\end{equation}

We also use the following lemma, which can be found in Gromov {\cite[3.3]{gromov-aft-yomdin}}.

\begin{lemma}[Gromov]\label{lemme-gromov}
Let $Y$ be the intersection of an algebraic affine curve in~$\R^d$ with $[-1,1]^d$. For any $r\in\N^*$ there exist at most $m_0$ $C^r$-arcs $\gamma_j:[0,1]\to Y$, where~$m_0$ is an integer depending only on $d$, $r$, and $\deg(Y)$, such that
\begin{itemize}
\item[(1)] $Y=\bigcup_j \gamma_j([0,1])$;
\item[(2)] $\|\gamma_j^{(k)}(t)\|\leq 1$ for all $j$, $t\in[0,1]$ and $k\leq r$;
\item[(3)] all $\gamma_j$'s are analytic diffeomorphisms from $(0,1)$ to their images;
\item[(4)] the images of the $\gamma_{j}$'s can only meet on their boundaries.
\end{itemize}
\end{lemma}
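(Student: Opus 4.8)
The plan is to combine effective real algebraic geometry with Yomdin's reparametrization technique. Write $\delta=\deg(Y)$ and let $C=\{f_1=\dots=f_s=0\}$ be the affine algebraic curve whose trace in the cube is $Y=C\cap[-1,1]^d$, all $f_i$ of degree $\le\delta$. \emph{Step 1: cut $Y$ into monotone graph arcs.} Form the finite ``exceptional set'' $B\subset Y$ consisting of (i) the singular points of $C$, (ii) the points of $Y$ lying on some face $\{x_i=\pm 1\}$, and (iii) the critical points, on the smooth locus of $C$, of the projection $\pi\colon(x_1,\dots,x_d)\mapsto x_1$ (replace $x_1$ by a generic linear form if some component of $C$ is a fibre of $\pi$; whole components of $C$ contained in a face are lower-dimensional algebraic sets disposed of by induction on $d$). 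Each of (i)--(iii) is a proper algebraic subset of the one-dimensional $C$, hence finite, and the classical Milnor--Thom and Bézout estimates bound $\#B$, as well as the number of connected components of $C$, in terms of $d$ and $\delta$ only. Then $Y\setminus B$ lies in the open cube and is a disjoint union of finitely many smooth open arcs with no critical point of $\pi$; on each, $\pi$ is a local diffeomorphism from an interval to $\R$, hence strictly monotone, hence a diffeomorphism onto an open interval $I\subset(-1,1)$. So each such arc is the graph $\{(x,\phi_2(x),\dots,\phi_d(x)):x\in I\}$ of real-analytic algebraic functions $\phi_i$ of degree over $\R(x)$ bounded by $\delta$, and the number of these graph arcs is bounded in terms of $d$ and $\delta$.

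\emph{Step 2 (the crux): reparametrize each graph arc.} The task is to subdivide $I$ into $N=N(d,r,\delta)$ subintervals $J$ so that the affine reparametrization $t\in[0,1]\mapsto\gamma_J(t)=(x(t),\phi_2(x(t)),\dots,\phi_d(x(t)))$, with $x$ affine increasing onto $J$, satisfies $\|\gamma_J^{(k)}(t)\|\le 1$ for $1\le k\le r$. Since the coordinates of $\gamma_J^{(k)}$ are $|J|^{k}\phi_i^{(k)}(x(t))$, this amounts to choosing the $J$'s with $|J|^{k}\,|\phi_i^{(k)}|\le 1$ on $J$. Here the degree enters quantitatively: each $\phi_i^{(k)}$ is again an algebraic function of $x$ whose degree is bounded by a function of $k$ and $\delta$ (differentiate the minimal equation $P_i(x,\phi_i)=0$), so by Bézout one may delete a $(d,r,\delta)$-bounded number of subintervals containing a zero or a sign change of some $\phi_i^{(k)}$ with $k\le r+1$, leaving intervals on which all these derivatives are monotone. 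On such an interval a Taylor/Bernstein-type estimate, using $|\phi_i|\le 1$ (the graph lies in the cube), bounds $|\phi_i^{(k)}|$ by $\mathrm{const}(k,\delta)\,|J|^{-k}$, after which one uniform subdivision into $\lceil\mathrm{const}(d,r,\delta)\rceil$ equal pieces drives every derivative of order $1,\dots,r$ below $1$. (The $0$-th order bound $\|\gamma_J\|\le 1$ is immaterial for the application to $(\ref{yomdin-famille})$; up to a dimensional constant it holds because the arc lies in $[-1,1]^d$, and can be made exact by a final harmless rescaling.) This is precisely Yomdin's reparametrization theorem for semialgebraic sets, and the \emph{uniformity of $N$ in the curve, not just in its degree} is the technical heart of the whole argument, hence the main obstacle.

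\emph{Step 3: assemble.} Let the $\gamma_j$ be all the arcs $\gamma_J$ produced in Step 2, ranging over the boundedly many graph arcs of Step 1 and their boundedly many subdivisions; their total number is at most $m_0=m_0(d,r,\delta)$. Property (2) is the conclusion of Step 2. Each $\gamma_j$ is an affine reparametrization of an analytic graph over a subinterval, hence an analytic diffeomorphism from $(0,1)$ onto its image, which is (3). The interiors of the sub-arcs inside a single graph arc are pairwise disjoint by construction, while distinct graph arcs are exactly the connected components of $Y\setminus B$, so the closures of sub-arcs coming from different graph arcs can meet only inside the finite set $B$, i.e. at endpoints — this is (4). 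Finally the open graph arcs cover $Y\setminus B$, and passing to the closures of the finitely many sub-arcs adds back the finitely many points of $B$, giving $Y=\bigcup_j\gamma_j([0,1])$, which is (1).
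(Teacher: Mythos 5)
The paper itself does not prove this lemma: it is quoted verbatim from Gromov \cite[3.3]{gromov-aft-yomdin}, so the only question is whether your argument is a correct self-contained proof. It is not: Step 2 has a genuine gap, located exactly at the point you yourself call ``the technical heart''. The key claimed estimate $|\phi_i^{(k)}|\leq C(k,\delta)\,|J|^{-k}$ on an interval $J$ where $|\phi_i|\leq 1$ and the derivatives up to order $r+1$ are monotone is a Markov-type inequality, valid for polynomials but false for algebraic functions. Take $\phi(x)=\sqrt{x}$ on $J=(0,\varepsilon)$, i.e.\ the graph of the smooth curve $y^2=x$ near a critical point of your projection $\pi$: $|\phi|\leq 1$ and every derivative is monotone of constant sign, yet $\sup_J|\phi'|=\infty$, so no bound of the form $C\,|J|^{-1}$ holds, and the affinely reparametrized arc adjacent to that branch point is not even $C^1$ up to its endpoint. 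Monotonicity plus $|\phi|\leq 1$ only yields interior bounds, at definite distance from the endpoints of $J$; but the endpoints of your subintervals unavoidably include the critical points of $\pi$ and the singular points of $C$, which condition (1) forces you to cover.

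The failure is structural, not a missing computation: with affine reparametrizations of graphs over one fixed (even generic) direction the conclusion is unreachable once $r\geq 2$. For the cuspidal cubic $y^2=x^3$, any arc whose image contains the cusp is, near the cusp, a graph either over $x$ (then $\phi(x)=x^{3/2}$ and $|J|^2\sup_J|\phi''|=\infty$ for every $J$ adjacent to $0$) or over $y$ (then $\psi(y)=y^{2/3}$ and already $\psi'$ is unbounded); over any other direction the curve is not a graph there at all. A parametrization with bounded derivatives at the cusp must be non-affine, e.g.\ $t\mapsto(t^2,t^3)$, and producing such substitutions with bounds depending only on $d$, $r$ and $\deg(Y)$ (uniformly in the curve) is precisely the content of the Gromov--Yomdin algebraic lemma --- which your last sentence of Step 2 invokes (``this is precisely Yomdin's reparametrization theorem''), making the argument circular at its crux. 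Steps 1 and 3 are fine; to repair Step 2 you must either reproduce Gromov's inductive reparametrization scheme, with quadratic/Puiseux-type substitutions near the critical and singular points and induction on $r$, or do as the paper does and simply cite \cite[3.3]{gromov-aft-yomdin}.
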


\begin{proof}[Proof of Theorem \ref{iteres-mvolr}]
Inequality (\ref{ent-r}) does not depend on the choice of a particular metric on $X$, so we can consider an embedding $X\subset\P^d_\R$ and take the metric induced by Fubini--Study on $X$. The projective space $\P^d(\R)$ is covered by the $(d+1)$ cubes~$Q_k$, $k\in\{0,\cdots,d\}$ given in homogeneous coordinates by $|Z_k|=\max_j|Z_j|$. Each of these $Q_k$ is located in the affine chart $U_k=\{Z_k\neq 0\}\simeq\R^d$, and in this chart it is identified with $[-1,1]^d$.

The degree of $D$ as a subvariety of $\P^d$ only depends on the Chern class $[D]$. Therefore we can apply Lemma \ref{lemme-gromov} to any divisor $D'\in\mathcal{V}(D)$, intersected with one  of the $Q_k$'s: any real locus of $D'\in\mathcal{V}(D)$ is covered by at most $m_1$ $C^r$-arcs $\gamma_{D',j}$, the integer $m_1=(d+1)m_0$ being independent of $D'$, such that $\|\gamma_{D',j}^{(k)}\|_\infty\leq K$ for all $k\leq r$, where $r$ is a fixed positive integer and $K$ a positive constant (which comes from the comparison of Euclidean and Fubini--Study metrics on $[-1,1]^d$). Now we apply (\ref{yomdin-famille}) to obtain
\begin{equation}
\begin{split}
\limsup_{n\to+\infty}\frac{1}{n}\log\left(\mvol_\R(f_*^nD)\right)
&\leq
\limsup_{n\to+\infty}\frac{1}{n}\log\left(m_1 \max_{D',j}\left\{\lo(f_\R^n\circ\gamma_{D',j})\right\}\right)\\
&\leq
\h(f_\R)+\frac{2}{r}\liap(f_\R).
\end{split}
\end{equation}
Since the regularity of both $X(\R)$ and $f_\R$ is $C^\infty$, we may take the limit as $r$ goes to $+\infty$ and get the desired inequality.
\end{proof}

\begin{rema}
Yomdin's theorem (as well as its version in family) and Gromov's lemma still hold for arbitrary dimensional submanifolds. Therefore the proof of Theorem \ref{iteres-mvolr} can be adapted when $X$ is a variety with higher dimension.
\end{rema}


\subsection{An upper bound for concordance}

\begin{theorem}\label{entropie}
Let $X$ be a real algebraic surface and let $f$ be a real hyperbolic type\footnote{Recall that \emph{hyperbolic type} just means that $\h(f_\C)>0$.} automorphism of $X$. Then 
\begin{equation}\label{dist-ent}
\alpha(X)\leq\frac{\h(f_\R)}{\h(f_\C)}.
\end{equation}
\end{theorem}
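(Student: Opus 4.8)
The plan is to derive the bound from Theorems~\ref{iteres-mvolc} and~\ref{iteres-mvolr} applied to the same ample divisor, combined with the very definition of $\mathcal{A}(X)$. Suppose $\alpha\in\mathcal{A}(X)$, so there exist $C>0$ and $q\in\N^*$ with $\mvol_\R(D')\geq C\,\vol_\C(D')^{\alpha}$ for every real ample divisor $D'$ whose class is $q$-divisible. Fix one real ample divisor $D$ with $[D]$ $q$-divisible; such a divisor exists because $\Amp(X_\R)$ is open in $\NS(X_\R;\R)$ and contains classes in the sublattice $q\,\NS(X_\R;\Z)$, which is of finite index and hence meets the open ample cone. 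Now consider the iterates $f_*^nD$. Since $f$ is an automorphism defined over $\R$, the operation $f_*$ preserves ampleness, preserves $\NS(X_\R;\Z)$, and commutes with multiplication by $q$; hence each $f_*^n[D]$ is again a $q$-divisible class of a real ample divisor, so the hypothesis of $\mathcal{A}(X)$-membership applies to every $f_*^nD$.

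The core of the argument is then to take $\tfrac1n\log$ of the chain
\begin{equation}
C\,\vol_\C(f_*^nD)^{\alpha}\leq\mvol_\R(f_*^nD),
\end{equation}
pass to the $\limsup$ as $n\to+\infty$, and invoke the two growth theorems. On the left, Theorem~\ref{iteres-mvolc} gives $\lim_n\tfrac1n\log\vol_\C(f_*^nD)=\h(f_\C)$, which is positive by the hyperbolic type assumption, so $\lim_n\tfrac1n\log\!\bigl(C\,\vol_\C(f_*^nD)^{\alpha}\bigr)=\alpha\,\h(f_\C)$. On the right, Theorem~\ref{iteres-mvolr} gives $\limsup_n\tfrac1n\log\mvol_\R(f_*^nD)\leq\h(f_\R)$. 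Combining these yields $\alpha\,\h(f_\C)\leq\h(f_\R)$, i.e. $\alpha\leq\h(f_\R)/\h(f_\C)$. Since this holds for every $\alpha\in\mathcal{A}(X)$, taking the supremum gives $\alpha(X)\leq\h(f_\R)/\h(f_\C)$, which is the claim.

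The step I expect to require the most care is not any estimate but the bookkeeping that makes the chain legitimate for all $n$ at once: one must check that $f_*^nD$ stays inside the class of divisors governed by the single pair $(C,q)$ coming from $\alpha\in\mathcal{A}(X)$, rather than needing a constant depending on $n$. This is exactly where one uses that $f_*$ is a lattice automorphism of $\NS(X_\R;\Z)$ sending ample classes to ample classes and $q$-divisible classes to $q$-divisible classes; after that, the inequality $\mvol_\R(f_*^nD)\geq C\,\vol_\C(f_*^nD)^{\alpha}$ is uniform in $n$ and the logarithmic asymptotics do the rest. A minor point to note is that, because $f$ is of hyperbolic type, $\h(f_\C)=\log\lambda(f)>0$, so dividing by $\h(f_\C)$ is harmless and the right-hand side of~\eqref{dist-ent} is a well-defined number in $[0,1]$ (the upper bound $1$ being guaranteed independently by Manning's bound $\h(f_\R)\leq\h(f_\C)$, though we do not even need that here).
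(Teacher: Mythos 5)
Your argument is exactly the paper's: pick $\alpha\in\mathcal{A}(X)$ with its $(C,q)$, observe that $f_*^nD$ remains real, ample, and $q$-divisible, take $\tfrac1n\log$ of the defining inequality, and combine Theorems~\ref{iteres-mvolc} and~\ref{iteres-mvolr} to get $\alpha\,\h(f_\C)\leq\h(f_\R)$ before letting $\alpha\to\alpha(X)$. Correct, and same route as the paper.
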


\begin{proof}
Let $\alpha$ be an exponent in the interval $\mathcal{A}(X)$. This means that there are~${q\in\N^*}$ and~$C>0$ such that $\mvol_\R(D)\geq C\vol_\C(D)^\alpha$ for all real ample divisors $D$ with~$[D]$~$q$-divisible. For such a divisor, $f_*^n[D]$ is also $q$-divisible for all $n\in\N$, and by Theorems \ref{iteres-mvolc} and \ref{iteres-mvolr} we get
\begin{equation}
\begin{split}
\h(f_\R)
&\geq
\limsup_{n\to+\infty}\frac{1}{n}\log\mvol_\R(f_*^nD)\\
&\geq
\limsup_{n\rightarrow+\infty}\frac{1}{n}\left(\log C+\alpha\log\vol_\C(f_*^nD)\right)\\
&=
\alpha\h(f_\C).
\end{split}
\end{equation}
Then we take the limit as $\alpha\to\alpha(X)$ and we obtain (\ref{dist-ent}).
\end{proof}


\subsection{A lower bound for real volume of the iterates of a divisor}

\begin{definition}\label{veryample}
Let $M$ be a differentiable surface. A family $\Gamma$ of curves on $M$ is said to be \emph{very ample} if for all $P\in M$ and for all directions $\mathcal{D}\subset {\rm T\!}_xM$, there is a curve $\gamma\in\Gamma$ on which $P$ is a regular point and whose tangent direction at $P$ is $\mathcal{D}$.
\end{definition}

\begin{example}\label{ex-tres-ample}
Let $X$ be a real algebraic surface and $D$ be a very ample real divisor on $X$. Then the family $\mathcal{V}(D)$, as a family of curves on $X(\R)$, is a very ample family in the sense of Definition \ref{veryample}.
\end{example}

\begin{theorem}\label{horseshoe}
Let $M$ be a compact Riemannian surface, $g:M\rightarrow M$ be a diffeomorphism of class $C^{1+\epsilon}$ (with $\epsilon>0$) with positive entropy, and $\Gamma$ be a very ample family of curves on $M$. Then for all $\lambda<\exp(\h(g))$, there exist a curve~${\gamma\in\Gamma}$ and a constant $C>0$ such that
\begin{equation}\label{long-exp}
\lo(g^n(\gamma))\geq C\lambda^n
\end{equation}
for all $n\in\N$.
\end{theorem}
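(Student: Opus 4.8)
The plan is to deduce Theorem~\ref{horseshoe} from Katok's theory of hyperbolic measures and horseshoes for surface diffeomorphisms. Since $g$ is $C^{1+\epsilon}$ with $\h(g)>0$, the variational principle gives an ergodic invariant measure $\mu$ with $h_\mu(g)>0$, and by the Ruelle inequality (plus $\dim M=2$) the measure $\mu$ is hyperbolic: it has one positive and one negative Lyapunov exponent, with the positive one $\chi^+\ge h_\mu(g)>0$. Katok's theorem then provides, for any $\delta>0$, a compact $g^m$-invariant hyperbolic set $\Lambda$ (a ``horseshoe'') on which $g^m$ is conjugate to a full shift, and whose topological entropy satisfies $\tfrac1m\h(g^m|_\Lambda)\ge h_\mu(g)-\delta$; moreover one may take $h_\mu(g)$ as close to $\h(g)$ as desired by first choosing $\mu$ with $h_\mu(g)$ close to $\h(g)$. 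Hence for any $\lambda<\exp(\h(g))$ we can fix such a horseshoe $\Lambda$ whose expansion rate along unstable manifolds exceeds $\lambda$.

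The next step is to use the very ampleness of $\Gamma$ to catch an unstable manifold of $\Lambda$. Pick a point $P\in\Lambda$ and consider its local unstable manifold $W^u_{loc}(P)$, a $C^1$ curve through $P$ with a well-defined tangent direction $\mathcal D=T_PW^u_{loc}(P)$. By Definition~\ref{veryample} there is a curve $\gamma\in\Gamma$ passing through $P$ as a regular point with tangent direction exactly $\mathcal D$ at $P$. Because $\gamma$ is tangent to the unstable direction at a point of the hyperbolic set, a standard graph-transform / inclination-lemma ($\lambda$-lemma) argument shows that the forward iterates $g^n(\gamma)$ accumulate on (and stretch along) the unstable manifolds of $\Lambda$: more precisely, a definite-length subarc of $\gamma$ around $P$ gets expanded under $g^n$ at the unstable rate, so $\lo(g^n(\gamma))\ge C\lambda^n$ for a suitable $C>0$ and all $n\ge 0$ (the finitely many small $n$ are absorbed into $C$ since $\lo(g^n(\gamma))>0$ always).

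The main obstacle is making the last estimate clean, i.e. passing from ``$\gamma$ is tangent to $W^u$ at one point'' to ``a macroscopic subarc of $\gamma$ is uniformly stretched for all $n$''. The delicate point is that $\gamma$ need only be $C^1$ (curves in $\Gamma$, or in $\mathcal V(D)$, are smooth, so this is fine here) and that a priori $\gamma$ could be tangent to $W^u_{loc}(P)$ to higher order or could leave the neighbourhood of $\Lambda$; one handles this by working in Pesin/adapted charts on a horseshoe, where the unstable cone field is strictly invariant and vectors inside it are expanded by a factor $>\lambda$ at every step, so it suffices that a short subarc of $\gamma$ near $P$ has tangent vectors in the unstable cone — which holds by continuity since its tangent at $P$ is exactly the unstable direction. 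The one-dimensional nature of $M$ is what makes the unstable direction a line and the cone argument work without further transversality hypotheses. One should also note that we only need $\lo(g^n\gamma)\to\infty$ exponentially, not that $g^n\gamma$ remains embedded, so self-intersections of $g^n(\gamma)$ cause no trouble: length is measured with multiplicity or, equivalently, the length of the stretched subarc already gives the lower bound.

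Alternatively, if one prefers to avoid quoting Katok directly, the same conclusion follows by combining Yomdin-type lower semicontinuity with the reverse inequality: one can build the horseshoe by hand from a measure of nearly maximal entropy using Lyapunov charts, but this is essentially reproving Katok's theorem. I would therefore cite \cite{yomdin} together with Katok's horseshoe theorem (or the exposition in Katok--Hasselblatt) and present the cone-field stretching estimate as the only computation, keeping it to a short paragraph.
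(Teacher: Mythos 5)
Your overall strategy matches the paper's: invoke Katok's theorem to produce a horseshoe $\Lambda$ for some iterate $g^k$ with entropy close to $\h(g)$, use very ampleness to place a curve $\gamma\in\Gamma$ through a point $P\in\Lambda$ in a favourable position, and then show that iterates of $\gamma$ grow exponentially because they are caught by the horseshoe. The difference is in the geometric mechanism you use after Katok. You ask $\gamma$ to be \emph{tangent to the unstable direction} at $P$ and then invoke a cone-field/inclination-lemma stretching estimate; the paper instead asks $\gamma$ to be merely \emph{transverse to the stable manifold} $W^s(P)$, and then uses the nested rectangles $R_{\omega_0,\ldots,\omega_{-n}}$ (which shrink to $W^s(P)$) to find a subarc $\gamma'\subset\gamma$ crossing a rectangle downward. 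Your choice is a priori stronger and hence also available by very ampleness, so no harm there.

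The gap is in the sentence ``a definite-length subarc of $\gamma$ around $P$ gets expanded under $g^n$ at the unstable rate, so $\lo(g^n(\gamma))\ge C\lambda^n$.'' The cone-field expansion estimate applies only while the curve remains inside the neighbourhood of $\Lambda$ on which the invariant unstable cone field is defined; as soon as $g^{nk}(\gamma)$ is long enough it folds, large portions exit this neighbourhood, and for those portions there is no expansion control (they can even contract). Pointwise cone expansion therefore does not directly give a lower bound on $\lo(g^n(\gamma))$. What actually produces the exponential growth is a combinatorial count, and this is precisely the content of the paper's Lemma~\ref{q^n}: an arc crossing the horseshoe rectangle $\Delta$ downward has image under $G^n=g^{kn}$ containing $q^n$ subarcs each crossing $\Delta$, hence total length $\ge q^nL$, where $q$ is the number of symbols and $L$ the height of $\Delta$. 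This counts re-entries into the rectangle rather than tracking pointwise derivative norms, and it is insensitive to the pieces of the curve that leave $\Delta$. Your inclination-lemma step does correctly show that $g^{nk}(\gamma)$ eventually contains an arc $C^1$-close to a local unstable leaf, hence crossing $\Delta$; but to convert that into the bound $\lo(g^n(\gamma))\ge C\lambda^n$ you must still insert the $q^n$-crossing lemma, which your sketch never states. A related slip is the phrase ``expansion rate along unstable manifolds exceeds $\lambda$'': what Katok directly guarantees is entropy $\frac1k\h(g^k|_\Lambda)>\log\lambda$, i.e.\ $q>\lambda^k$. The derivative expansion $\kappa$ is indeed $\ge q$ for a full horseshoe, but it is $q$ (the symbol count), not $\kappa$, that governs the length of the folded curve, so conflating the two obscures the one estimate the proof actually needs.
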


In other words, we have the following inequality:
\begin{equation}\label{type-newhouse}
\sup_{\gamma\in\Gamma}\left\{\liminf_{n\rightarrow+\infty}\frac{1}{n}\log\left(\lo(g^n(\gamma))\right)\right\}\geq\h(g).
\end{equation}

This has to be compared with a similar result due to Newhouse \cite{newhouse}, who considers manifolds of arbitrary dimension and noninvertible maps, but obtains the inequality (\ref{type-newhouse}) with a limit superior instead of a limit inferior (assumptions on the family $\Gamma$ are also lightly different). On the other hand, the lower bound (\ref{type-newhouse}) is optimal when $M$ and $g$ are $C^\infty$, by Yomdin's Theorem \ref{yomdin}.

\begin{corollary}\label{minorer-mvolr}
Let $f$ be a real automorphism of a real algebraic surface $X$.
For all $\lambda<\exp(\h(f_\R))$ and all very ample real divisors $D$ on $X$, there exists $C>0$ such that
\begin{equation}
\mvol_\R(f_*^nD)\geq C\lambda^n
\end{equation}
for all $n\in\N$.
\end{corollary}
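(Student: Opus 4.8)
\textbf{Proof proposal for Corollary \ref{minorer-mvolr}.}

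The plan is to deduce the corollary from Theorem \ref{horseshoe} essentially by plugging in the right very ample family of curves and translating lengths of iterated arcs into real volumes of iterated divisors. First I would dispose of the trivial case: if $\h(f_\R)=0$ there is nothing to prove, since every $\lambda<1$ and $C=\lo(D(\R))>0$ (or any small positive constant, using $\mvol_\R(D)>0$ which holds by Proposition \ref{pinceau} because $D$ is very ample) works, as $\mvol_\R(f_*^nD)$ is bounded below by a positive constant. So assume $\h(f_\R)>0$.

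The main step is the following identification. Take $M=X(\R)$ with the Riemannian metric coming from a fixed Kähler metric on $X(\C)$, and $g=f_\R$, which is an analytic (hence $C^{1+\epsilon}$, indeed $C^\infty$) diffeomorphism of the compact surface $M$; by hypothesis $\h(g)=\h(f_\R)>0$. For the very ample family I would take $\Gamma=\mathcal{V}(D)$, the set of real effective divisors numerically equivalent to $D$, viewed as curves on $X(\R)$; by Example \ref{ex-tres-ample} this is a very ample family in the sense of Definition \ref{veryample} precisely because $D$ is very ample. Now fix $\lambda<\exp(\h(f_\R))$. Theorem \ref{horseshoe} produces a curve $\gamma\in\Gamma$, i.e.\ a real divisor $D'\in\mathcal{V}(D)$, and a constant $C>0$ with $\lo(f_\R^n(\gamma))\geq C\lambda^n$ for all $n$. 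The key observation is that $f_\R^n(\gamma)$ is contained in the real locus of the divisor $f_*^nD'$: indeed $f$ maps the complex variety $D'(\C)$ to $(f_*^nD')(\C)$ and, being a real automorphism, maps $D'(\R)$ into $(f_*^nD')(\R)$. Hence
\begin{equation}
\vol_\R(f_*^nD')\geq\lo(f_\R^n(\gamma))\geq C\lambda^n.
\end{equation}
Since $f_*^nD'$ belongs to $\mathcal{V}(f_*^nD)$ (because $f_*$ preserves numerical equivalence and $D'\sim D$), the definition of $\mvol_\R$ gives $\mvol_\R(f_*^nD)\geq\vol_\R(f_*^nD')\geq C\lambda^n$, which is exactly the claimed inequality.

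I expect the only genuinely delicate point to be the length comparison in the inequality $\vol_\R(f_*^nD')\geq\lo(f_\R^n(\gamma))$: one must make sure that the "curve" $\gamma$ furnished by Theorem \ref{horseshoe} is realized inside $D'(\R)$ with its induced length, and that the $1$-dimensional Hausdorff measure (volume) of the real algebraic curve $(f_*^nD')(\R)$ dominates the length of the arc $f_\R^n(\gamma)$ — this is clear since $f_\R^n(\gamma)$ is a subset of $(f_*^nD')(\R)$, but one should be mindful that $\gamma$ a priori is just a curve on $M$, so the cleanest route is to carry along, in the statement of Theorem \ref{horseshoe}, the fact that $\gamma$ is (the image of) an arc lying in the support of a member of $\mathcal{V}(D)$, which is how Example \ref{ex-tres-ample} is set up. Everything else — the trivial case, the behaviour of $f_*$ on $\mathcal{V}(D)$, and the passage from $\lambda<\exp(\h(f_\R))$ being arbitrary — is routine.
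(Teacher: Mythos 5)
Your main argument — taking $M=X(\R)$, $g=f_\R$, $\Gamma=\mathcal{V}(D)$ (very ample by Example \ref{ex-tres-ample}), invoking Theorem \ref{horseshoe} to get a member $D'\in\mathcal{V}(D)$ with $\lo(f_\R^n(D'(\R)))\geq C\lambda^n$, and then identifying $f_\R^n(D'(\R))=(f_*^nD')(\R)$ so that $\mvol_\R(f_*^nD)\geq\vol_\R(f_*^nD')\geq C\lambda^n$ — is correct and is exactly the argument the paper leaves implicit (no proof is written out in the paper). The identification you flag as the delicate point is in fact painless because the elements of $\Gamma=\mathcal{V}(D)$ are literally the real loci $D'(\R)$, so $\lo(g^n(\gamma))$ \emph{is} $\vol_\R(f_*^nD')$ with equality, not just an inequality.

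The one place you go too fast is the dismissal of the case $\h(f_\R)=0$. You assert that $\mvol_\R(f_*^nD)$ is bounded below by a positive constant uniformly in $n$, citing only $\mvol_\R(D)>0$; but $\mvol_\R(f_*^nD)>0$ for each $n$ (which follows from very ampleness of $f_*^nD$ and Proposition \ref{pinceau}) is not the same as a uniform lower bound, and the crude estimate $\mvol_\R(f_*^nD)\geq\|{\rm D}f_\R^{-1}\|_\infty^{-n}\mvol_\R(D)$ only yields an exponentially decaying bound, which is not enough for $\lambda$ close to $1$. A uniform lower bound is plausible here (for instance, if $f_*$ is quasi-unipotent the orbit $\{f_*^n[D]\}$ either stays in a finite set or escapes to infinity in the ample cone), but it requires an argument you do not give. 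That said, this is a side case: Theorem \ref{horseshoe} itself requires positive entropy, the corollary is only invoked later (in Lemma \ref{une orbite}) under the explicit standing assumption $\h(f_\R)>0$, and the zero-entropy situation is handled separately there. So the gap is harmless for the paper, but you should either prove the uniform lower bound or simply note that the statement is vacuous in the intended applications when $\h(f_\R)=0$.
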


The proof of Theorem \ref{horseshoe} relies on a result due to Katok \cite[S.5.9 p. 698]{katok-hasselblatt}, which asserts that the entropy of surface diffeomorphisms is well approximated by horseshoes. For definition and properties of horseshoes, we refer to \cite[\textsection 6.5]{katok-hasselblatt}.

\begin{theorem}[Katok]\label{katok}
Let $M$ be a compact surface, and $g:M\rightarrow M$ be a diffeomorphism of class $C^{1+\epsilon}$ (with $\epsilon>0$) with positive entropy. For any $\eta>0$, there exists a horseshoe $\Lambda$ for some positive iterate $g^k$ of $g$ such that
\begin{equation}\label{h-approx}
\h(g)\leq\frac{1}{k}\h(g^k_{|\Lambda})+\eta.
\end{equation}
\end{theorem}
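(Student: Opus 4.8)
This is Katok's theorem, which the excerpt imports from \cite{katok-hasselblatt}; here is the plan I would follow to reconstruct a proof.

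\textbf{Step 1 (reduction to a hyperbolic ergodic measure).} The plan is to start from the variational principle $\h(g)=\sup_\mu h_\mu(g)$ over $g$-invariant Borel probabilities, where $h_\mu$ is the Kolmogorov--Sinai entropy, and, via the ergodic decomposition, fix an \emph{ergodic} $\mu$ with $h_\mu(g)>\h(g)-\eta/2$. Since $M$ is a surface, $\mu$ has two Lyapunov exponents $\lambda^-\le\lambda^+$, a.e.\ constant. The Ruelle inequality $h_\mu(g)\le(\lambda^+)^++(\lambda^-)^+$ applied to $g$ forces $\lambda^+>0$; applied to $g^{-1}$, whose exponents are $-\lambda^+\le-\lambda^-$ and whose entropy is again $h_\mu(g)$, it forces $\lambda^-<0$. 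Hence $\mu$ is hyperbolic and Pesin theory becomes available.

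\textbf{Step 2 (Pesin blocks and a combinatorial count).} For each $\ell\in\N^*$, Pesin theory furnishes a compact regular set $\Lambda_\ell$ on which the Oseledets splitting is continuous, the hyperbolicity is uniform up to a tempered factor bounded by $\ell$, and each $x\in\Lambda_\ell$ carries local stable and unstable manifolds of size $\ge1/\ell$, varying continuously, with angles between them bounded below; moreover $\mu(\Lambda_\ell)\to1$, so I fix $\ell$ with $\mu(\Lambda_\ell)>1/2$. Next I would invoke Katok's description of $h_\mu(g)$ through $(n,\epsilon)$-separated sets carried by sets of almost full $\mu$-measure: for $\epsilon$ small and all large $n$ there is an $(n,\epsilon)$-separated set $E_n$ with $\#E_n\ge\exp\!\big(n(h_\mu(g)-\eta/4)\big)$, and, by applying Birkhoff's theorem to the indicator of $\Lambda_\ell$ and discarding a thin subset, one may assume that every $x\in E_n$ visits $\Lambda_\ell$ at a set of times of density $\ge1/2$ in $\{0,\dots,n-1\}$.

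\textbf{Step 3 (building the horseshoe).} Cover $\Lambda_\ell$ by finitely many balls, say $M_\ell$ of them, of radius $\ll\epsilon$. A pigeonhole over the $\le\binom n2$ pairs of return times and over these $M_\ell$ balls produces two return times $p<q$ with $q-p\ge n/2$, a ball $w$, and a subset $E'_n\subset E_n$ with $\#E'_n\ge\#E_n/(M_\ell n^2)\ge\exp\!\big(n(h_\mu(g)-\eta/3)\big)$ for $n$ large, such that $g^p(x),g^q(x)\in w$ for all $x\in E'_n$. Put $k=q-p\le n$ and $\tilde E=g^p(E'_n)\subset w$: these are $\ge\exp\!\big(n(h_\mu(g)-\eta/3)\big)$ distinct points of the Pesin block with $g^k(\tilde E)\subset w$, pairwise separated at a fixed geometric scale inherited from the $(n,\epsilon)$-separation of $E_n$. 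Because $k\ge n/2$ and the orbits are Pesin-regular, $g^k$ stretches each $g^k\!\big(W^u_{\mathrm{loc}}(\tilde x)\big)$ into a curve long enough to cross $W^s_{\mathrm{loc}}(\tilde y)$ transversally for every $\tilde y\in\tilde E$; a cone-field / graph-transform argument then shows that the maximal invariant set
\[
\Lambda=\bigcap_{m\in\Z}g^{-km}\Big(\bigcup_{\tilde x\in\tilde E}R_{\tilde x}\Big),
\]
with $R_{\tilde x}$ a small rectangle around $\tilde x$, is a horseshoe for $g^k$ topologically conjugate to the full shift on $\#\tilde E$ symbols, so $\h(g^k_{|\Lambda})=\log\#\tilde E=\log\#E'_n$. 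Since $k\le n$ this yields
\[
\tfrac1k\,\h(g^k_{|\Lambda})=\tfrac1k\log\#E'_n\ \ge\ h_\mu(g)-\tfrac\eta3\ \ge\ \h(g)-\eta,
\]
which is inequality (\ref{h-approx}).

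\textbf{Main obstacle.} The serious difficulty is the extraction and assembly in Step 3: turning a large $(n,\epsilon)$-separated family of orbit segments that merely \emph{return} to a Pesin block into a genuine compact hyperbolic set carrying the full shift, while retaining \emph{both} the exponential cardinality of the symbol set and a fixed geometric separation of the base points, and while keeping the iterate $k$ comparable to $n$ so that the normalisation $\frac1k\h(g^k_{|\Lambda})$ does not collapse. Concretely one must control the distortion of the Pesin charts along long orbit segments, verify the Markov property of the rectangles $R_{\tilde x}$ under $g^k$, and run the graph transform in the non-uniformly hyperbolic setting — and it is exactly here that the hypothesis $g\in C^{1+\epsilon}$ enters, through the Lipschitz / absolute-continuity regularity of the stable and unstable laminations on $\Lambda_\ell$ on which the graph transform relies.
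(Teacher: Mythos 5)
The paper does not prove this statement: it is quoted as Katok's theorem with a reference to \cite[S.5.9]{katok-hasselblatt} and used as a black box in the proof of Theorem \ref{horseshoe}, so there is no in-paper argument to compare against. Your outline is precisely the standard proof from that source --- an ergodic measure of large entropy, hyperbolicity on a surface via Ruelle's inequality applied to $g$ and $g^{-1}$, Pesin blocks, Katok's separated-set characterization of metric entropy, a pigeonhole on return times, and a graph-transform/cone-field construction of the horseshoe --- and is therefore the intended route; the one caveat is that, as you yourself acknowledge, the horseshoe assembly in Step 3 is the substantive content of the theorem and is left at the level of a plan (in particular, distinctness of the symbol branches must come from divergence of the orbit segments at intermediate times, since $(n,\epsilon)$-separation of $E_n$ gives no spatial separation of the set $g^p(E'_n)$ at time $p$).
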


\begin{proof}[Proof of Theorem \ref{horseshoe}]
Fix real numbers $\lambda$ and $\eta$ such that $1<\lambda<\exp(\h(g))$ and $0<\eta\leq\h(g)-\log(\lambda)$. Let $\Lambda$ be a horseshoe for $G=g^k$ satisfying~(\ref{h-approx}). Let~$\Delta\supset\Lambda$ be a ``rectangle'' corresponding to this horseshoe, in such a way that ${\Lambda=\bigcap_{j\in\Z}G^j(\Delta)}$.
The set $G(\Delta)\cap\Delta$ has $q$ connected components $\Delta_1,\cdots,\Delta_q$, which are ``subrectangles'' crossing entirely $\Delta$ downward (see Figure \ref{ex-horseshoe}). The restriction~$G_{|\Lambda}$ is topologically conjugate to the full-shift on $q$ symbols, by the conjugacy map
\begin{equation*}
\begin{split}
\{1,\cdots,q\}^\Z
&\longrightarrow
\Lambda\\
(\omega_j)_{j\in\Z}
&\longmapsto
\bigcap_{j\in\Z}G^j(\Delta_{\omega_j}).
\end{split}
\end{equation*}
In particular $\h(G_{|\Lambda})=\log(q)$. We denote by $L$ the distance between the upper and lower side of $\Delta$.

\begin{figure}[h]
\begin{center}
\scalebox{0.5}{\input{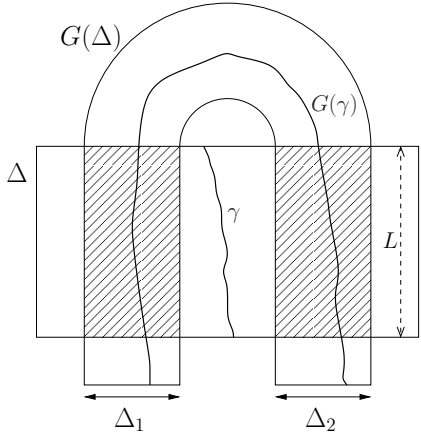}}
\caption{An example of horseshoe, here with $q=2$.}
\label{ex-horseshoe}
\end{center}
\end{figure}

\begin{lemma}\label{q^n}
Let $\gamma\subset\Delta$ be an arc crossing the rectangle $\Delta$ downward. Then $\lo(G^n(\gamma))\geq q^nL$ for all $n\in\N$.
\end{lemma}

\begin{proof}
It is enough to remark that the arc $G^n(\gamma)$ contains $q^n$ subarcs crossing $\Delta$ downward (see Figure \ref{ex-horseshoe} for $n=1$). This can be seen by induction on $n$.
\end{proof}

Now fix a point $P\in\Lambda$, $P=\bigcap_{j\in\Z}G^j(\Delta_{\omega_j})$. Let $\gamma\in\Gamma$ be a curve that goes through $P$ transversally to the stable variety $W^s(P)$ (the horizontal one). For any sequence $(\epsilon_j)_{j\in\N}\in\{1,\cdots,q\}^\N$, we set (see Figure \ref{ex-horseshoe2})
\begin{equation}
R_{\epsilon_1,\cdots,\epsilon_n}=\bigcap_{j=0}^{n}G^{-j}(\Delta_{\epsilon_{j}}).
\end{equation}

\begin{figure}[h]
\begin{center}
\scalebox{0.7}{\input{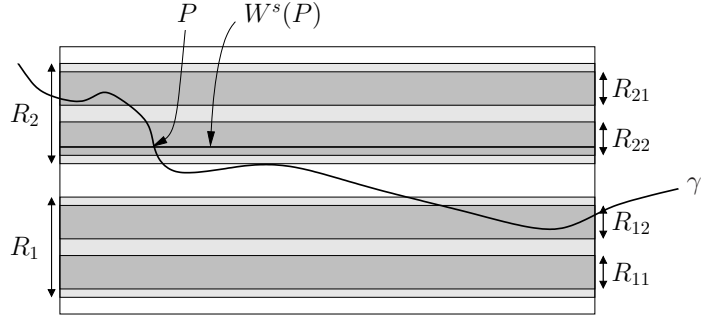}}
\caption{The rectangles $R_{\epsilon_1}$ and $R_{\epsilon_1,\epsilon_2}$ for the horseshoe of Figure \ref{ex-horseshoe}.}
\label{ex-horseshoe2}
\end{center}
\end{figure}

The sequence $(R_{\epsilon_1,\cdots,\epsilon_n})_{n\in\N}$ is a decreasing sequence of nested rectangles that converge to the curve $\bigcap_{j\in\N}G^{-j}(\Delta_{\epsilon_{j}})$. If $(\epsilon_n)_{n\in\N}=(\omega_{-n})_{n\in\N}$ this curve is the stable variety $W^s(P)$ (intersected with $\Delta$). Since $\gamma$ is transverse to it, there exist an integer $n_0$ and a subarc $\gamma'\subset\gamma$ such that $\gamma'$ crosses the rectangle $R_{\omega_0,\cdots,\omega_{-n_0}}$ downward. (On Figure \ref{ex-horseshoe2}, we may choose $\gamma'\subset R_{22}$.) Hence the arc ${G^{n_0}(\gamma')\subset G^{n_0}(\gamma)}$ satisfies the assumptions of Lemma \ref{q^n}, and thus $\lo(G^{n_0+n}(\gamma))\geq q^nL$ for all~${n\in\N}$. So if we set $C'=\min\left\{\frac{L}{q^{n_0}},\left(\frac{\lo(G^n(\gamma))}{q^n}\right)_{0\leq n\leq n_0-1}\right\}$, then
\begin{equation}
\begin{split}
\lo(g^{nk}(\gamma))
&\geq C'q^n\\
&= C'\exp(n\h(g^k_{|\Lambda}))\\
&\geq C'\exp(nk(\h(g)-\eta))\\
&\geq C'\lambda^{nk}.
\end{split}
\end{equation}
Since $\lo(g^n(\gamma))\leq\|{\rm D}g^{-1}\|\lo(g^{n+1}(\gamma))$, we get Inequality (\ref{long-exp}) by Euclidean division by $k$, where we have set $C=C'(\lambda\|{\rm D}g^{-1}\|_\infty)^{-k}>0$.
\end{proof}


\subsection{An exact formula for concordance when $\rho(X_\R)=2$}

\begin{theorem}\label{formule-exacte}
Let $X$ be a real algebraic surface with $\rho(X_\R)=2$. Assume that there exists a real hyperbolic type automorphism $f$ on $X$. Then
\begin{equation}\label{exacte}
\alpha(X)=\frac{\h(f_\R)}{\h(f_\C)}.
\end{equation}
\end{theorem}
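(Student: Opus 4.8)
The plan is to prove the reverse inequality $\alpha(X)\geq \h(f_\R)/\h(f_\C)$, since Theorem~\ref{entropie} already gives the upper bound $\alpha(X)\leq\h(f_\R)/\h(f_\C)$. Concretely, I would fix an exponent $\alpha<\h(f_\R)/\h(f_\C)$ and produce constants $C>0$ and $q\in\N^*$ with $\mvol_\R(D)\geq C\vol_\C(D)^\alpha$ for every real ample divisor $D$ with $[D]$ $q$-divisible; letting $\alpha\to\h(f_\R)/\h(f_\C)$ then finishes the proof. The hypothesis $\rho(X_\R)=2$ is crucial: $\NS(X_\R;\R)$ is a plane carrying an intersection form of signature $(1,1)$, and $f_*$ acting on it is a hyperbolic isometry of this form, hence has two eigenvalues $\lambda(f)^{\pm 1}$ (with $\lambda(f)=\exp\h(f_\C)>1$) with eigenlines $\R\theta^+$ and $\R\theta^-$ spanning the isotropic cone. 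Thus $\overline{\Pos(X_\R)}$ is the cone between $\R_{\geq0}\theta^+$ and $\R_{\geq0}\theta^-$, and $\Nef(X_\R)$ is a closed subcone of it.

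First I would show that the $f_*$-orbit of any ample class dominates every ample class up to bounded cohomological distortion. Pick one very ample real divisor $D_0$; by Corollary~\ref{minorer-mvolr}, for any $\mu<\exp\h(f_\R)$ there is $C_0>0$ with $\mvol_\R(f_*^n D_0)\geq C_0\mu^n$ for all $n$. Meanwhile $\vol_\C(f_*^n D_0)=f_*^n[D_0]\cdot[\kappa]\sim c\,\lambda(f)^n$ by Theorem~\ref{iteres-mvolc}. So along this single orbit we already get $\mvol_\R(f_*^n D_0)\geq C'\vol_\C(f_*^n D_0)^{\beta}$ with $\beta=\log\mu/\log\lambda(f)$, which can be taken arbitrarily close to $\h(f_\R)/\h(f_\C)$. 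The remaining work is to \emph{interpolate}: a general $q$-divisible ample class $[D]$ is not on this orbit, but because the plane $\NS(X_\R;\R)$ is two-dimensional and $f_*^n[D_0]/\lambda(f)^n\to (\text{const})\theta^+$, the classes $f_*^n[D_0]$ become asymptotically aligned with $\theta^+$; by also using $f_*^{-n}$ one approaches $\theta^-$. Hence for a suitable $q$ (making the relevant sublattice usable) any $q$-divisible ample $[D]$ can be written as $[D]=a\,f_*^{m}[D_0]+b\,f_*^{-m}[D_0']+(\text{effective correction})$ with $a,b\geq 0$, $m$ comparable to $\log\vol_\C(D)$, for appropriate very ample $D_0,D_0'$; superadditivity of $\mvol_\R$ then yields $\mvol_\R(D)\geq a\,\mvol_\R(f_*^m D_0)\geq a C_0\mu^m$, and since $\vol_\C(D)$ is comparable to $\lambda(f)^m$ this gives the desired bound $\mvol_\R(D)\geq C\vol_\C(D)^\alpha$.

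The main obstacle I anticipate is the interpolation/decomposition step: making precise that every $q$-divisible ample integral class is a nonnegative integral combination of iterates $f_*^{\pm m}$ of a fixed finite set of very ample classes, with $m$ controlled \emph{linearly} in $\log\vol_\C(D)$ and with the combination coefficients bounded below. In the two-dimensional cone this is a lattice-point argument: the iterates $f_*^n[D_0]$ (for $n\in\Z$) form a discrete set of rays in the plane clustering towards $\R_{\ge0}\theta^+$ (as $n\to+\infty$) and towards $\R_{\ge0}\theta^-$ (as $n\to-\infty$), and any ray strictly inside $\Amp(X_\R)$ lies between two consecutive such rays; one then uses that the two generating classes of a pair of consecutive iterates span a finite-index sublattice, whose index (uniform over all consecutive pairs, by $f_*$-equivariance) one takes for $q$. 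The quantitative point — that a class at "cohomological scale" $T:=\vol_\C(D)$ needs only $m=O(\log T)$ iterates and produces coefficients $a\gtrsim T/\lambda(f)^m\gtrsim 1$ — follows from the uniform expansion/contraction rates $\lambda(f)^{\pm 1}$ of $f_*$ on the two eigenlines, but writing it cleanly requires care with the change of basis between $(\theta^+,\theta^-)$-coordinates and the integral lattice. Once this decomposition is in hand, Theorem~\ref{entropie} supplies the matching upper bound and the equality~(\ref{exacte}) follows.
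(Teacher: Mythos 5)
Your overall strategy is the same as the paper's: the upper bound comes from Theorem \ref{entropie}, and for the reverse inequality one fixes $\alpha<\h(f_\R)/\h(f_\C)$, obtains a single-orbit bound $\mvol_\R(f_*^nD_0)\geq C\vol_\C(f_*^nD_0)^\alpha$ from Corollary \ref{minorer-mvolr} together with (\ref{volc-borne}) (this is Lemma \ref{une orbite} in the paper, which also treats $n<0$ by applying the same argument to $f^{-1}$), and then propagates that bound to all $q$-divisible ample classes via a lattice decomposition in the rank-$2$ N\'eron--Severi plane. Two points in your execution deserve a warning, and the way the paper handles them is cleaner.

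First, the decomposition $[D]=a\,f_*^m[D_0]+b\,f_*^{-m}[D_0']$ with the \emph{same} $m$ on both sides does not admit a uniform $q$: in the eigenbasis $(\theta^+,\theta^-)$ of $f_*$, the index of the sublattice $\Z f_*^m[D_0]\oplus\Z f_*^{-m}[D_0']$ inside $\NS(X_\R;\Z)$ is comparable to $\lambda(f)^{2m}$ and therefore grows without bound, so no fixed $q$ can make all such decompositions integral. Your parenthetical idea — use two \emph{consecutive} iterates $f_*^n[D_0]$ and $f_*^{n+1}[D_0]$, whose span has index independent of $n$ by $f_*$-equivariance — is the correct repair and is precisely the paper's fundamental-domain construction (Lemma \ref{nb fini d'orbites}): take a primitive ample $\theta_1$, set $\theta_2=f_*\theta_1$, note that the cone $\mathcal{D}$ they border is a fundamental domain for $f_*$ on $\Amp(X_\R)=\Pos(X_\R)$, and list the finitely many integer points $\theta_3,\dots,\theta_r$ inside the associated parallelogram; then every ample class is $f_*^n$ of a nonnegative integer combination of $\theta_1,\dots,\theta_r$, and the role of $q$ is simply to make the $qD_j$ very ample so that Lemma \ref{une orbite} applies.

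Second, the quantitative claims that $m\asymp\log\vol_\C(D)$ and that $a\gtrsim 1$ are neither uniformly true nor needed. For $[D]=N\theta_0$ with $\theta_0$ fixed in the interior of the cone and $N\to\infty$, the relevant $n$ stays bounded while $\vol_\C(D)\asymp N$ grows, so "$\vol_\C(D)$ comparable to $\lambda(f)^m$" fails (your conclusion is saved only because $a\asymp N$ makes up for it). The paper sidesteps this entirely: once $[D]=\sum_k f_*^n[D'_{j_k}]$ with $D'_j=qD_j$ very ample, one applies superadditivity of $\mvol_\R$, the single-orbit bound termwise, and the elementary inequality $\sum_k|x_k|^\alpha\geq\bigl(\sum_k|x_k|\bigr)^\alpha$ for $\alpha\in(0,1]$ (inequality (\ref{minkowski})) to get $\mvol_\R(D)\geq C\vol_\C(D)^\alpha$ with no control on $n$ or on the number of summands.
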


\begin{rema}
The assumptions of the theorem imply that the surface $X$ is either a torus, a K3 surface, or an Enriques surface. Indeed, as seen in Section \ref{intro-dyn}, its minimal model is either one of these three types of surfaces, or a rational surface. But if $X$ is not minimal or if $X$ is rational, then the class of the canonical divisor~$K_X$ would be nontrivial in $\NS(X_\R;\R)$. Since this class is preserved by $f_*$, this map would have $1$ as an eigenvalue. This is impossible, because $\NS(X_\R;\R)$ has dimension $2$ and the spectral radius of $f_*$ must be $>1$.
\end{rema}

\begin{proof}[Proof of Theorem \ref{formule-exacte}]
By Theorem \ref{entropie}, it is enough to prove that any nonnegative exponent $\alpha<\frac{\h(f_\R)}{\h(f_\C)}$ belongs to $\mathcal{A}(X)$. This is obvious when $\h(f_\R)=0$, so we suppose that $f_\R$ has positive entropy, and we fix such an exponent $\alpha$.

\begin{lemma}\label{une orbite}
Let $D$ be a very ample real divisor on $X$. There exists $C > 0$ such that
\begin{equation}\label{eq-une-orbite}
\mvol_\R(f_*^nD) \geq C \vol_\C(f_*^nD)^\alpha
\end{equation}
for all $n \in \Z$.
\end{lemma}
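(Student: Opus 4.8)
The plan is to combine the complex volume growth (Theorem~\ref{iteres-mvolc}) with the lower bound on real volume growth coming from horseshoes (Corollary~\ref{minorer-mvolr}). First I would recall that, since $D$ is very ample, the family $\mathcal{V}(D)$ of real effective divisors numerically equivalent to $D$ is a very ample family of curves on $X(\R)$ in the sense of Definition~\ref{veryample} (Example~\ref{ex-tres-ample}). Because $f_\R$ is a $C^\infty$ diffeomorphism of the compact surface $X(\R)$ with $\h(f_\R)>0$, Corollary~\ref{minorer-mvolr} applies: fixing any $\lambda$ with $\log\lambda<\h(f_\R)$ close enough to $\h(f_\R)$ so that $\alpha<\frac{\log\lambda}{\h(f_\C)}$ (possible since $\alpha<\frac{\h(f_\R)}{\h(f_\C)}$), we get a constant $C_1>0$ with $\mvol_\R(f_*^nD)\geq C_1\lambda^n$ for all $n\in\N$.

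Next I would control the complex side. By Theorem~\ref{iteres-mvolc}, $\frac{1}{n}\log\vol_\C(f_*^nD)\to\h(f_\C)=\log\lambda(f)$; more precisely, from~(\ref{volc-borne}) (when $\lambda(f)>1$, which holds since $f$ is of hyperbolic type) the ratio $\vol_\C(f_*^nD)/\lambda(f)^n$ converges to a positive limit, so there is a constant $C_2>0$ with $\vol_\C(f_*^nD)\leq C_2\lambda(f)^n=C_2\exp(n\h(f_\C))$ for all $n\geq 0$. Therefore
\begin{equation*}
\vol_\C(f_*^nD)^\alpha \leq C_2^\alpha\exp\bigl(n\alpha\,\h(f_\C)\bigr).
\end{equation*}
Since we arranged $\alpha\,\h(f_\C)<\log\lambda$, the right-hand side is $\leq C_2^\alpha\lambda^n$. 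Combining with the lower bound gives $\mvol_\R(f_*^nD)\geq C_1\lambda^n\geq C_1C_2^{-\alpha}\,\vol_\C(f_*^nD)^\alpha$ for all $n\in\N$, which is~(\ref{eq-une-orbite}) with $C=C_1C_2^{-\alpha}$.

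Two small technical points need attention. First, the statement is for all $n\in\Z$, not just $n\geq 0$: for negative $n$ one applies the same argument to the automorphism $f^{-1}$, noting that $(f^{-1})_* = (f_*)^{-1}$, that $\h(f^{-1}_\R)=\h(f_\R)$ and $\h(f^{-1}_\C)=\h(f_\C)$, and that $D$ remains very ample; one may then take the minimum of the two constants obtained. Second, one must make sure the chosen $\lambda$ simultaneously satisfies $\log\lambda<\h(f_\R)$ and $\alpha\,\h(f_\C)<\log\lambda$; this is exactly the room provided by the strict inequality $\alpha<\h(f_\R)/\h(f_\C)$, so such a $\lambda$ exists. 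The main obstacle — already handled by the earlier results — is the dynamical lower bound of Corollary~\ref{minorer-mvolr}, i.e.\ producing one divisor in the family whose real locus has length growing like $\lambda^n$; everything else here is bookkeeping with the two exponential estimates.
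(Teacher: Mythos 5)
Your proof is correct and takes essentially the same route as the paper's: combine Corollary~\ref{minorer-mvolr} for the real lower bound with the estimate from~(\ref{volc-borne}) for the complex upper bound, then treat $n<0$ by applying the argument to $f^{-1}$ and taking the minimum of the two constants. The only cosmetic difference is that the paper applies Corollary~\ref{minorer-mvolr} directly with $\lambda=\lambda(f)^\alpha$ (admissible since $\alpha\,\h(f_\C)<\h(f_\R)$), whereas you introduce an intermediate $\lambda$ strictly between $\exp(\alpha\,\h(f_\C))$ and $\exp(\h(f_\R))$; both are valid.
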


\begin{proof}
Since $\lambda(f)^\alpha=\exp(\alpha\h(f_\C))<\exp(\h(f_\R))$, there exists, by Corollary~\ref{minorer-mvolr}, a positive number $C_\R$ such that $\mvol_\R(f_*^nD)\geq C_\R\lambda(f)^{n\alpha}$ for all $n\in\N$. On the other hand there is a positive number $C_\C$ such that $\vol_\C(f_*^nD)\leq C_\C\lambda(f)^n$ for all ${n\in\N}$ (cf (\ref{volc-borne})). It follows that $\mvol_\R(f_*^nD) \geq C^+ \vol_\C(f_*^nD)^\alpha$ for all $n\in\N$, where we have set $C^+=C_\R/C_\C^\alpha$.

Applying the same argument to $f^{-1}$, there exists a positive number $C^-$ such that ${\mvol_\R(f_*^{-n}D) \geq C^- \vol_\C(f_*^{-n}D)^\alpha}$ for all $n\in\N$. Hence we obtain (\ref{eq-une-orbite}), with~${C=\min(C^+,C^-)}$.
\end{proof}

\begin{lemma}\label{nb fini d'orbites}
There are finitely many real ample divisors $D_1,\cdots,D_r$ on $X$ such that any real ample divisor $D$ on $X$ is equivalent to one of the form $\sum_{k=1}^{s} f_*^nD_{j_k}$, with $n\in\Z$ and $j_k\in\{1,\cdots,r\}$.
\end{lemma}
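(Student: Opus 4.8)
The plan is to exploit the fact that $\rho(X_\R)=2$, so that $\NS(X_\R;\R)$ is a plane on which the intersection form has signature $(1,1)$, and the hyperbolic automorphism $f$ acts with eigenvalues $\lambda(f)$ and $\lambda(f)^{-1}$ on this plane. First I would fix a very ample real divisor $D_0$ on $X$ (which exists because $X$ is projective), and note that the classes $[f_*^n D_0]$ for $n\in\Z$ accumulate onto the two isotropic rays of the intersection form in $\NS(X_\R;\R)$: indeed, writing $[D_0]=a\theta^+ + b\theta^- + (\text{trivial})$ in an eigenbasis $\theta^\pm$ (with $f_*\theta^\pm=\lambda(f)^{\pm1}\theta^\pm$), ampleness forces $a>0$ and $b>0$, so $[f_*^nD_0]/\lambda(f)^{|n|}$ converges to $a\theta^+$ as $n\to+\infty$ and to $b\theta^-$ as $n\to-\infty$. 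The rays $\R_{\geq0}[f_*^nD_0]$ therefore form a sequence of directions in the ample cone that converges to both boundary rays of $\overline{\Amp(X_\R)}=\Nef(X_\R)$ (which is a two-dimensional cone with those two isotropic rays as its edges, since $\Nef$ lies inside the closed positive cone and contains $[D_0]$).

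The key step is then a compactness/cone argument. The ample cone $\Amp(X_\R)$ is an open convex cone in the plane $\NS(X_\R;\R)$, bounded by the two isotropic rays $\R_{\geq0}\theta^+$ and $\R_{\geq0}\theta^-$. The orbit $\{\R_{\geq0}[f_*^nD_0]\}_{n\in\Z}$ is a collection of rays inside this cone accumulating on both edges. I claim that finitely many of the translates $\{f_*^n(\text{simplicial cone on consecutive orbit rays})\}$, together with finitely many ``extra'' divisors to handle divisibility of the lattice, cover the whole ample cone: concretely, between two consecutive orbit rays $\R_{\geq0}[f_*^nD_0]$ and $\R_{\geq0}[f_*^{n+1}D_0]$ lies a closed sub-cone $\sigma_n$, and $f_*\sigma_n=\sigma_{n+1}$, so all the $\sigma_n$ are $f_*$-translates of $\sigma_0$; since the $\sigma_n$ for $n\in\Z$ together with the two boundary rays tile $\overline{\Amp(X_\R)}$, every ample class lies in some $\sigma_n=f_*^n\sigma_0$. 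Now pick generators $D_1,\dots,D_r$ for the monoid of integral points of $\sigma_0$ — more precisely, choose $D_1,\dots,D_r$ very ample (or at least effective) real divisors whose classes generate, as a monoid, a finite-index sub-monoid of the lattice points in $\sigma_0$ — and let $q$ be such that $q$ times any lattice point of $\sigma_0$ lies in the monoid $\sum \N[D_j]$. Then every real ample $D$ with $[D]$ lying in $\sigma_n$ has $f_*^{-n}D$ in $\sigma_0$, hence $[f_*^{-n}D]$ (after clearing the index, which is automatic once we only quote this lemma for $q$-divisible classes, or we simply absorb it) is a nonnegative integer combination $\sum_k [D_{j_k}]$, so $[D]=\sum_k f_*^n[D_{j_k}]$. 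Allowing the same $D_{j_k}$ to be repeated gives exactly the asserted form $\sum_{k=1}^s f_*^n D_{j_k}$.

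The main obstacle I anticipate is the lattice bookkeeping: the monoid of lattice points inside a two-dimensional rational (or merely closed, since the edges here are irrational isotropic rays!) cone need not be finitely generated, and here the edges $\R_{\geq0}\theta^\pm$ are genuinely irrational, so one must be slightly careful. The fix is that we do \emph{not} need to generate the full cone $\sigma_0$ down to its irrational edges — we only need to cover the \emph{open} ample cone, and any real ample class sits strictly inside, so it actually lies in the interior of $\sigma_n$ for some $n$ and at bounded ``depth''; shrinking $\sigma_0$ very slightly to a rational sub-cone $\sigma_0'$ still large enough that $\bigcup_n f_*^n\sigma_0'$ covers $\Amp(X_\R)$ (possible since the orbit rays accumulate on both edges, so consecutive rational approximants work) reduces us to a genuinely rational two-dimensional cone, whose lattice-point monoid \emph{is} finitely generated by Gordan's lemma. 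Those finitely many generators, scaled up to be ample and real (adding a fixed large ample class if necessary, and noting that a monoid generator times a suitable integer is effective), are the desired $D_1,\dots,D_r$; the index $q$ coming from this scaling is harmless since the lemma will only be applied, in the proof of Theorem~\ref{formule-exacte}, to classes we are free to assume $q$-divisible. I would also remark that the alternative, cleaner route is to invoke directly that $\Nef(X_\R)/f_*$ is ``finite'' in the appropriate sense — a standard fact for infinite-order isometries of a hyperbolic lattice with the cone lying between the isotropic rays — but spelling out the Gordan's lemma argument on a single fundamental domain $\sigma_0'$ is the most self-contained.
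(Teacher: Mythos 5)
Your core strategy matches the paper's: take a fundamental domain for the $f_*$-action on $\Amp(X_\R)$ bounded by two consecutive orbit rays, and generate its lattice-point monoid by finitely many elements. But your ``main obstacle'' paragraph worries about a non-issue, and the fix you propose would actually break the argument. The cone $\sigma_0$ has edges $\R^+[D_0]$ and $\R^+[f_*D_0]$, both \emph{rational}, so $\sigma_0$ is already a rational polyhedral cone and Gordan's lemma applies to it directly; the irrational isotropic rays $\R^+\theta^\pm$ bound $\Nef(X_\R)$, not $\sigma_0$. Shrinking $\sigma_0$ to a strict sub-cone $\sigma_0'\subsetneq\sigma_0$ would not preserve the covering: the translates $f_*^n\sigma_0$ tile $\overline{\Amp(X_\R)}$ meeting only along their boundary rays, so $\bigcup_n f_*^n\sigma_0'$ would miss a nonempty wedge between each consecutive pair. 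Your scaling/adding-an-ample-class manoeuvre is also both unnecessary (a lattice point lying in the open cone $\sigma_0\subset\Amp(X_\R)$ is automatically the class of a real ample divisor) and harmful (adding a fixed class $A$ to every generator changes the linear combinations they produce). The paper carries out a hands-on version of Gordan's lemma: it picks a primitive ample class $\theta_1$, sets $\theta_2=f_*\theta_1$, lets $\mathcal D$ be the cone between $\R^+\theta_1$ and $\R^+\theta_2$, and lists the lattice points $\theta_3,\ldots,\theta_r$ in the parallelogram with vertices $0$, $\theta_1$, $\theta_1+\theta_2$, $\theta_2$; every lattice point of $\mathcal D$ is then uniquely $k_1\theta_1+k_2\theta_2$ or $k_1\theta_1+k_2\theta_2+\theta_j$ with $k_1,k_2\in\N$. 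If you delete the ``fix'' paragraph and apply Gordan's lemma to $\sigma_0$ as it stands, your proposal is the paper's proof.
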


\begin{proof}
On the $2$-dimensional space $\NS(X_\R;\R)$, the isometry $f_*$ has exactly two eigenlines $\mathcal{D}^+$ and $\mathcal{D}^-$, respectively associated with eigenvalues $\lambda(f)$ and $\lambda(f)^{-1}$. These lines are necessarily the isotropic directions of the intersection form. We choose eigenvectors $\theta^+\in\mathcal{D}^+$ and $\theta^-\in\mathcal{D}^-$ in the closure of $\Pos(X_\R)$, so that this cone is bordered by half-lines $\R^+ \theta^+$ and $\R^+ \theta^-$. Since it is preserved by $f_*$, the cone $\Amp(X_\R)$ coincides with $\Pos(X_\R)$. The integer points in this cone correspond to classes of real ample divisors. Let $\theta_1$ be such a point that we choose to be primitive, and let $\theta_2 = f_* \theta_1$ (observe that $\theta_2$ is also primitive). Denote by $\mathcal{D}$ the closed convex cone of $\NS(X_\R;\R)$ bordered by half-lines $\R^+\theta_1$ and $\R^+\theta_2$. By construction $\mathcal{D} \backslash \{0\}$ is a fundamental domain for the action of $f_*$ on $\Amp(X_\R)$ (see Figure \ref{dom-fond}).

\begin{figure}[h]
\begin{center}
\scalebox{0.4}{\input{fig3.pstex_t}}
\caption{The fundamental domain $\mathcal{D}\backslash\{0\}$.}\label{dom-fond}
\end{center}
\end{figure}

Denote by $\theta_3,\theta_4,\cdots,\theta_r$ the entire points inside the parallelogram whose vertices are $0$, $\theta_1$, $\theta_1+\theta_2$ and $\theta_2$. Any point in $\mathcal{D}$ can be expressed uniquely as $k_1\theta_1 + k_2\theta_2 + \theta_j$ or $k_1\theta_1+k_2\theta_2$, with $(k_1,k_2)\in\N^2$ and $j \in \{3,\cdots,r\}$. For all real ample divisors~$D$, there is $n\in\Z$ such that $f_*^{-n}[D] \in \mathcal{D}$, so we are done by setting $D_1,\cdots,D_r$ real ample divisors whose classes are $\theta_1,\cdots,\theta_r$.
\end{proof}

We go back to the proof of Theorem \ref{formule-exacte}. Let $q$ be a positive integer such that the divisors $D'_1=qD_1,\cdots,D'_r=qD_r$ are all very ample. By Lemma \ref{une orbite}, there exists a positive number $C$ such that, for all $j \in \{1,\cdots,r\}$ and $n \in \Z$, we have $\mvol_\R(f_*^nD'_j) \geq C \vol_\C(f_*^nD'_j)^\alpha$. Let $D$ be a real ample divisor whose Chern class is $q$-divisible. There are $n \in \Z$ and $j_1,\cdots,j_s \in \{1,\cdots,r\}$ such that~${[D] = \sum_{k=1}^sf_*^n[D'_{j_k}]}$. Then
\begin{align}
\mvol_\R(D) & \geq \sum_k\mvol_\R(f_*^nD'_{j_k})\\
& \geq C \sum_k\vol_\C(f_*^nD'_{j_k})^\alpha\label{ligne1}\\
& \geq C \left(\sum_k\vol_\C(f_*^nD'_{j_k})\right)^\alpha\label{ligne2}\\
& = C \vol_\C(D)^\alpha.
\end{align}
From (\ref{ligne1}) to (\ref{ligne2}), we have used the following special case of Minkowski inequality:
\begin{equation}\label{minkowski}
\left(\sum_{k=1}^s|x_k|\right)^\alpha\leq\sum_{k=1}^s|x_k|^\alpha\quad\quad\forall\alpha\in(0,1].
\end{equation}

Hence we see that $\alpha$ belongs to $\mathcal{A}(X)$, and Theorem \ref{formule-exacte} is proved.
\end{proof}

\begin{rema}
We do not know if concordance is achieved in Theorem \ref{formule-exacte}.
\end{rema}



\section{Abelian Surfaces}\label{tores}


\subsection{Preliminaries}

A {\it real abelian variety} $X$ is a real algebraic variety whose underlying complex manifold $X(\C)$ is a complex torus $\C^g/\Lambda$. We say \emph{real elliptic curve} when $g=1$, and \emph{real abelian surface} when $g=2$. As we still assume that~$X(\R)\neq\emptyset$, we are brought to the case where the antiholomorphic involution~$\sigma_X$ comes from the complex conjugation on $\C^g$, and the lattice $\Lambda$ has the form
\begin{equation}\label{reseau}
\Lambda=\Z^g\oplus\tau\Z^g,
\end{equation}
where $\tau\in\M_g(\C)$ is such that $\Im(\tau)\in\GL_g(\R)$ and $2\Re(\tau)=\begin{pmatrix}I_r&0\\0&0\end{pmatrix}$, the integer~$r$ being characterized by the fact that $X(\R)$ has $2^{g-r}$ connected components (cf \cite[\textsection IV]{silhol}).

A (real) {\it homomorphism} between two real abelian varieties is a holomorphic map~$f:X=\C^g/\Lambda\rightarrow X'=\C^{g'}/\Lambda'$ which is compatible with the real structures (that is, $\sigma_{X'}\circ f=f\circ\sigma_X$) and which respects the abelian group structures (this is equivalent to $f(0)=0$). Such a map lifts to a unique $\C$-linear map~${F:\C^g\to\C^{g'}}$ such that $F(\Lambda)\subset\Lambda'$, whose matrix has integer coefficients (for ${F(\Z^g)\subset\Lambda'\cap\R^{g'}=\Z^{g'}}$). We also talk about \emph{endomorphisms}, \emph{isomorphisms} and \emph{automorphisms} of real abelian varieties. Observe that in this context,  automorphisms are asked to \emph{preserve the origin}.

A (real) {\it isogeny} between two real abelian varieties of same dimension is a homomorphism of real abelian varieties that is surjective, which means that its matrix has maximal rank. Two real abelian varieties are said to be {\it isogenous} when there exists an isogeny from one to the other (this is an equivalence relation; cf \cite[1.2.6]{birkenhake-lange}).

\begin{rema}
The real Picard number does not change by isogeny. Indeed, any isogeny $f:X\rightarrow X'$ gives rise to a homomorphism ${f^*:\NS(X'_\R;\Z)\rightarrow\NS(X_\R;\Z)}$ which is injective, hence $\rho(X'_\R)\leq\rho(X_\R)$; the other inequality follows by the symmetry of the isogeny relation.
\end{rema}

\begin{lemma}\label{isog}
Any real abelian surface $X$ is isogenous to $\C^2/\Lambda$, where $\Lambda$ has the form
\begin{equation}\label{reseau2}
\Lambda=\Z^2\oplus iS\Z^2,
\end{equation}
the matrix $S = \begin{pmatrix} y_1 & y_3 \\ y_3 & y_2 \end{pmatrix}$ being symmetric positive definite. We then have
\begin{equation}\label{nbpicard-ab}
\rho(X_\R) = 4 - \dim_\Q ( \Q y_1 + \Q y_2 + \Q y_3 ).
\end{equation}
\end{lemma}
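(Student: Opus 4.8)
The statement has two parts: first, that every real abelian surface is isogenous to one with period lattice of the special shape $\Lambda=\Z^2\oplus iS\Z^2$ with $S$ symmetric positive definite; and second, the formula $\rho(X_\R)=4-\dim_\Q(\Q y_1+\Q y_2+\Q y_3)$. I would start from the general normal form (\ref{reseau}): by \cite[\textsection IV]{silhol}, after replacing $X$ by an isogenous surface (which, by the preceding remark, changes neither $g$ nor $\rho(X_\R)$) we may assume $\Lambda=\Z^2\oplus\tau\Z^2$ with $\Im(\tau)\in\GL_2(\R)$ and $2\Re(\tau)$ equal to one of the three matrices $\mathrm{diag}(0,0)$, $\mathrm{diag}(1,0)$, $\mathrm{diag}(1,1)$ according to the number of connected components of $X(\R)$. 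The plan is to show that in each case a further isogeny (given by an integer change of basis of $\Lambda$ together with a $\mathbf{GL}_2(\Q)$-change of coordinates on $\C^2$, both compatible with complex conjugation) brings $\tau$ to the purely imaginary form $iS$; this is the standard fact that a real abelian variety is isogenous to the "totally real" model. Then one symmetrizes: $S$ may be taken symmetric, because replacing $\tau\mapsto A\tau$ for $A\in\mathbf{GL}_2(\Z)$ and conjugating by $A^{-T}$ shows $iS$ and $i(A S A^{T})$ give isogenous surfaces, and a positive definite matrix is $\mathbf{GL}_2(\Q)$-equivalent (even $\mathbf{GL}_2(\R)$-equivalent, but we want rational) to a symmetric one — more simply, one checks directly that the lattice $\Z^2\oplus i S\Z^2$ with $S$ the real part of $-i\tau\bar\tau^{-1}$-type expression can be arranged symmetric. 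Writing $S=\begin{pmatrix} y_1 & y_3 \\ y_3 & y_2 \end{pmatrix}$ then completes the first part.

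For the second part I would compute $\NS(X_\R;\Z)$ explicitly for $X=\C^2/(\Z^2\oplus iS\Z^2)$. The Néron–Severi group of a complex torus $\C^2/\Lambda$ is identified with the group of Hermitian forms $H$ on $\C^2$ whose imaginary part takes integer values on $\Lambda\times\Lambda$ (equivalently, alternating integral forms $E$ on $\Lambda$ satisfying $E(ix,iy)=E(x,y)$); see \cite{birkenhake-lange}. Writing $E$ as an alternating integer $4\times 4$ matrix in the basis $(e_1,e_2,iSe_1,iSe_2)$ of $\Lambda$, the condition $E(i\cdot,i\cdot)=E(\cdot,\cdot)$ together with the reality constraint $\sigma^*\theta=-\theta$ from (\ref{pb-sign}) — which here amounts to invariance under the involution induced by complex conjugation on $\C^2$, i.e. $e_j\mapsto e_j$, $iSe_j\mapsto -iSe_j$ — cuts out a sublattice whose rank I would now determine. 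A direct linear-algebra computation: the space of real alternating forms $E$ with $E(i\xi,i\eta)=E(\xi,\eta)$ and the correct $\sigma$-parity is parametrized by, after unwinding, the solutions of a system linear in the entries of $S$, and one finds that the rank equals $1$ plus the number of independent $\Q$-linear relations among $1$ and ... no: more precisely, the dimension of the space of such forms over $\Q$ is exactly $4-\dim_\Q(\Q y_1+\Q y_2+\Q y_3)$. I would set this up by writing the generic integral $E$, imposing the $(1,1)$-condition, and observing that compatibility with $\Z^2\oplus iS\Z^2$ forces certain entries to be rational multiples of $y_1,y_2,y_3$ subject to integrality, so the rank drops by the dimension of the $\Q$-span of $\{y_1,y_2,y_3\}$ minus one — a careful bookkeeping gives the stated formula.

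The main obstacle is the second, computational part: getting the count $\rho(X_\R)=4-\dim_\Q(\Q y_1+\Q y_2+\Q y_3)$ exactly right, rather than off by one or with the wrong set of rational quantities. One has to be scrupulous about (i) the precise shape of the involution $\sigma^*$ on alternating forms on $\Lambda$ under the chosen basis, (ii) the integrality condition $\Im H(\Lambda,\Lambda)\subset\Z$ translated into the entries of $S$, and (iii) the fact that we are computing a $\Q$-rank (the rank of the free abelian group $\NS(X_\R;\Z)$ equals the $\Q$-dimension of $\NS(X_\R;\Q)$), so it is legitimate to clear denominators and work with $\Q$-linear algebra throughout. A useful sanity check built into the plan: the formula must reproduce the known extreme cases — $\rho(X_\R)=1$ when $y_1,y_2,y_3$ are $\Q$-linearly independent (generic $S$), $\rho(X_\R)=3$ when $S$ is rational, i.e. $\dim_\Q(\Q y_1+\Q y_2+\Q y_3)=1$, matching the values $1,2,3$ asserted in Theorem \ref{thm-intro-ab} — and I would verify these before trusting the general count.
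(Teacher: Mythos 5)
The main gap is in the symmetrization step, and it is a real one. Starting from the general normal form~(\ref{reseau}) you have $\Lambda=\Z^2\oplus\tau\Z^2$ with $\Im(\tau)\in\GL_2(\R)$, but nothing there asserts that $\Im(\tau)$ is symmetric or positive definite. You propose to fix this by noting that $\tau\mapsto A\tau$ with $A\in\GL_2(\Z)$ corresponds to $S\mapsto ASA^T$, and then appeal to ``a positive definite matrix is $\GL_2(\Q)$-equivalent to a symmetric one.'' Neither half of this works: congruence $S\mapsto ASA^T$ preserves the decomposition of $S$ into symmetric and antisymmetric parts, so it cannot turn a non-symmetric $\Im(\tau)$ into a symmetric one; and the fallback claim is not a meaningful statement for a general matrix $S\in\GL_2(\R)$ (``positive definite'' presupposes symmetry, and congruence over $\Q$ preserves neither symmetry nor, for non-symmetric $S$, any useful positivity). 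The correct move, and the one the paper makes, is to invoke the existence of a \emph{real polarization} (Silhol, \S IV.3): choosing a real symplectic basis of $\Lambda$ for the Riemann form gives $\Lambda=D\Z^2\oplus\tau\Z^2$ with $D$ integral diagonal, $\tau$ \emph{symmetric}, $\Im(\tau)$ \emph{positive definite}, and $2\Re(\tau)$ integral. That is where the symmetry and positivity of $S$ come from; they are consequences of the Riemann relations, not of an integral change of basis. Once you have that normalization, killing $D$ and $\Re(\tau)$ by the dilation $z\mapsto 2z$ (as in the paper) gives the isogeny onto $\C^2/(\Z^2\oplus iS\Z^2)$, and this part of your plan is fine.

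For the second part, your outline (compute $\NS(X_\R;\Z)$ as $\sigma$-anti-invariant, $J$-compatible integral alternating forms on $\Lambda$) is the right computation and would, if carried out, reprove the formula; the paper simply cites Birkenhake--Lange \S1, 3.4 and Silhol \S IV (3.4). As written, though, you stop at ``a careful bookkeeping gives the stated formula,'' which leaves exactly the delicate rank count unproved. The sanity checks you list are good but are not a substitute for the count. In short: supply the real-polarization input for the first part, and either carry out the alternating-form computation in full or cite it as the paper does.
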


\begin{proof}
The existence of a real polarization on $X=\C^g/\Lambda$ (see \cite[\textsection IV.3]{silhol}) implies that the lattice $\Lambda$ can be set on the form $D\Z^2\oplus\tau\Z^2$, the matrix $D$ being diagonal with integer coefficients, and the matrix $\tau$ being symmetric, with $S=\Im(\tau)$ positive definite and $2\Re(\tau)$ an integer matrix. Hence the dilation by $2$ in $\C^2$ gives rise to a real isogeny from $\C^2 / \Lambda$ to $\C^2 / (\Z^2 \oplus iS\Z^2)$. Equality (\ref{nbpicard-ab}) comes from \cite[\textsection 1, 3.4]{birkenhake-lange} and  \cite[\textsection IV (3.4)]{silhol}.
\end{proof}

\begin{rema}
As a consequence of (\ref{nbpicard-ab}), we see that the real Picard number $\rho(X_\R)$ is $1$, $2$ or $3$. In contrast, the complex Picard number $\rho(X_\C)$ can also achieve the extra value $4$, when $X$ is isogenous to the square of an elliptic curve with complex multiplication (cf \cite[\textsection 2 7.1]{birkenhake-lange}).
\end{rema}

Now observe the following fact, which is very specific to tori.

\begin{proposition}\label{alpha-leq-1/2}
Let $f$ be an automorphism of a real abelian surface $X$. Then
\begin{equation}
\h(f_\C)=2\h(f_\R).
\end{equation}
Accordingly, $\alpha(X)\leq 1/2$ as soon as $X$ admits real hyperbolic type automorphisms.
\end{proposition}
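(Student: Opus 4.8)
The plan is to work with the lift $F\in\M_2(\Z)$ of $f$ to $\C^2$, exploit the product structure $\C^2 = \R^2\oplus i\R^2$ adapted to the real locus, and compute both entropies as spectral logradii of linear maps.

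First I would recall that, for a real automorphism $f$ of $X=\C^2/\Lambda$ fixing the origin, $f$ is induced by a $\C$-linear map $F:\C^2\to\C^2$ with $F(\Lambda)\subset\Lambda$; since $f$ is an automorphism, $F\in\GL_2(\Z)$ acting $\C$-linearly. Because $F$ has real (in fact integer) matrix entries, $F$ preserves the real subspace $\R^2\subset\C^2$, and $X(\R)=\R^2/(\Lambda\cap\R^2)$ is a real torus on which $f_\R$ is the linear automorphism induced by $F|_{\R^2}$. Both $f_\C$ and $f_\R$ are affine (indeed linear) maps of tori, so Yomdin's theorem applies in its sharp form and the entropy equals the logarithm of the spectral radius of the induced action on the tangent space — equivalently, of $F$ acting on the universal cover. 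Concretely: $\h(f_\R)=\log\rho(F)$, where $\rho(F)$ is the spectral radius of the integer matrix $F$, and $\h(f_\C)=\log\rho(F_\C)$, where $F_\C$ is $F$ viewed as an $\R$-linear endomorphism of $\C^2\cong\R^4$. The crucial elementary observation is that, since $F$ has real entries, the $\R$-linear map $F_\C$ on $\C^2=\R^2\oplus i\R^2$ is block-diagonal, with both blocks equal to $F|_{\R^2}$. Hence the eigenvalues of $F_\C$ (as an $\R^4$-map) are exactly the eigenvalues of $F$, each with doubled multiplicity, so $\rho(F_\C)=\rho(F)$.

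That identity $\rho(F_\C)=\rho(F)$ looks like it would give $\h(f_\C)=\h(f_\R)$, not the desired factor $2$, so the real content is to pin down which linear map actually computes $\h(f_\C)$ via Gromov--Yomdin. The point is that $\h(f_\C)$ is the spectral logradius not of the action on $\mathrm{H}^1(X(\C);\R)\cong\R^4$ but of the action $f^*$ on $\mathrm{H}^2(X(\C);\R)$, equivalently on $\Lambda^2$ of the cohomology of the complex torus, while $\h(f_\R)$ is governed by $f_{\R}^*$ on $\mathrm{H}^1(X(\R);\R)\cong\R^2$ (here Manning's bound is in fact an equality for linear toral maps, or one argues directly that $f_\R$ is a linear hyperbolic automorphism whose entropy is $\sum\log^+|\mu_i|$ over eigenvalues $\mu_i$ of $F|_{\R^2}$). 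Write the eigenvalues of $F|_{\R^2}$, which is a $2\times 2$ integer matrix of determinant $\pm1$, as $\mu$ and $\mu^{-1}$ (up to sign; hyperbolicity means $|\mu|>1$). Then $\h(f_\R)=\log|\mu|$. On the other hand the action of $f$ on $\mathrm{H}^1(X(\C);\R)$ is, by the real structure, the complexification-as-$\R^4$ of $F$, so its eigenvalues are $\mu,\mu^{-1}$ each doubled; and $\mathrm{H}^2$ is the exterior square, whose eigenvalues are the pairwise products. Among $\{\mu\cdot\mu,\ \mu\cdot\mu^{-1},\ \mu\cdot\mu^{-1},\ \mu\cdot\mu^{-1},\ldots\}$ the largest modulus is $|\mu|^2$ (coming from $\mu\wedge\mu$ with the two copies of the $\mu$-eigenspace), so $\rho(f^*|_{\mathrm{H}^2})=|\mu|^2$ and $\h(f_\C)=\log|\mu|^2=2\log|\mu|=2\h(f_\R)$.

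To make this rigorous I would (i) fix the isomorphism $\mathrm{H}^*(X(\C);\R)\cong\Lambda^*(\C^2\oplus\b{\C^2})^\vee$ and check that under the real structure the induced involution $\sigma^*$ swaps the holomorphic and antiholomorphic summands, so that the $f$-eigenvalues on $\mathrm{H}^1$ come in the conjugate-symmetric multiset described above; (ii) compute the exterior square and identify $|\mu|^2$ as the spectral radius on $\mathrm{H}^2$, noting that $|\mu|^2>|\mu|\cdot|\mu^{-1}|=1$ precisely when $f$ is of hyperbolic type; (iii) invoke the Gromov--Yomdin theorem (already recalled in the excerpt) for $\h(f_\C)$ and either Yomdin's sharp upper bound plus Manning's lower bound, or a direct computation for linear toral automorphisms, for $\h(f_\R)$. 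The last clause of the proposition is then immediate from Theorem~\ref{entropie}: if $X$ carries a real hyperbolic type automorphism $f$, then $\alpha(X)\leq \h(f_\R)/\h(f_\C)=1/2$. The main obstacle is purely bookkeeping — correctly tracking how the real structure acts on the Hodge decomposition of $\mathrm{H}^*(X(\C))$ so that one gets doubled eigenvalues on $\mathrm{H}^1$ and hence the clean factor $2$ — rather than anything dynamically deep.
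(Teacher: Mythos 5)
Your proposal reaches the correct conclusion and turns on the same basic observation as the paper (namely, that the eigenvalues of $F$ appear with doubled multiplicity when $F$ is viewed as an $\R$-linear endomorphism of $\C^2\cong\R^4$), but the route you take to the entropy of $f_\C$ is different and somewhat more elaborate. The paper applies the standard entropy formula for linear toral automorphisms, $\h = \sum_i\log^+|\mu_i|$ over the real eigenvalues of the lift, uniformly to both sides: $F|_{\R^2}$ has eigenvalues $\lambda,\lambda^{-1}$ giving $\h(f_\R)=\log|\lambda|$, while $F$ on $\R^4$ has eigenvalues $\lambda,\lambda,\lambda^{-1},\lambda^{-1}$ giving $\h(f_\C)=2\log|\lambda|$. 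You instead detour through Gromov--Yomdin on ${\rm H}^2(X(\C);\R)$ with an exterior-square computation; this is valid and gives the same $|\lambda|^2$, but it is more machinery than is needed.

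The detour is forced on you by an initial slip: the statement that \emph{the entropy equals the logarithm of the spectral radius of the differential on the universal cover} is not a correct formula for linear toral automorphisms --- it discards multiplicity (and, more generally, all but the top Lyapunov exponent), which is exactly why it falsely predicted $\h(f_\C)=\h(f_\R)$. You notice the discrepancy and recover, and at the end you even state the right formula $\sum\log^+|\mu_i|$; had you used it from the outset, the doubled eigenvalues on the tangent space would hand you the factor $2$ directly, with no need for ${\rm H}^2$, exterior squares, or Gromov--Yomdin. One further minor point: the paper passes to $f^2$ so that $F\in\SL_2(\Z)$, and you should at least note that replacing $f$ by $f^2$ scales both entropies by the same factor and hence is harmless; likewise, $X(\R)$ may have several connected components, but this does not affect the toral entropy formula since $f_\R$ permutes finitely many cosets of $\R^2/\Z^2$.
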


\begin{proof}
We lift the automorphism $f$ to a $\C$-linear map $F:\C^2\to\C^2$ whose matrix is in $\SL_2(\Z)$ (replacing $f$ by $f^2$ if necessary). If $F$ has spectral radius~$1$, then it is obvious that $\h(f_\R)=\h(f_\C)=0$. Otherwise, $F$ has two distinct eigenvalues $\lambda$ and~$\lambda^{-1}$, with $|\lambda|>1$. As a $\R$-linear map of $\C^2$, $F$ has eigenvalues~${(\lambda,\lambda,\lambda^{-1},\lambda^{-1})}$ (with multiplicities), thus $\h(f_\C)=2\log|\lambda|$ (see, for instance, \cite[2.6.4]{brin-stuck}). Restricted to $\R^2$, $F$ has eigenvalues $(\lambda,\lambda^{-1})$, hence $\h(f_\R)=\log(|\lambda|)$.

The last part is a consequence of Theorem \ref{entropie}.
\end{proof}

The aim of what follows is to prove the following theorem, which describes exhaustively the concordance for real abelian surfaces.

\begin{theorem}\label{thm-surf-ab}
Let $X$ be a real abelian surface. We have the following alternative:
\begin{itemize}
\item[(1)] $\rho(X_\R)=1$ and $\alpha(X)=1$;
\item[(2)] $\rho(X_\R)=2$ and
\begin{itemize}
\item[(i)] if the intersection form represents $0$ on $\NS(X_\R;\Z)$, then $\alpha(X)=1$,
\item[(ii)] otherwise, $\alpha(X)=1/2$;
\end{itemize}
\item[(3)] $\rho(X_\R)=3$ and $\alpha(X)=1/2$.
\end{itemize}
The concordance is achieved in all cases. It equals $1/2$ if and only if $X$ admits real hyperbolic type automorphisms.
\end{theorem}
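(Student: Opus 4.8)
The plan is to treat the three cases of $\rho(X_\R)$ separately, exploiting Lemma~\ref{isog} and the structure of $\Nef(X_\R)$ in each. Case (1) ($\rho(X_\R)=1$) is immediate from Corollary~\ref{rho=1}. For the remaining cases I would first establish the \emph{dynamical dichotomy}: a real abelian surface $X$ admits a real hyperbolic type automorphism if and only if the intersection form on $\NS(X_\R;\Z)$ represents $0$ (equivalently, $X$ carries a real elliptic fibration, i.e. a nef class of square $0$ that is not numerically trivial). One direction is easy, since a hyperbolic $f_*$ on $\NS(X_\R;\R)$ has isotropic eigenlines, and being defined over $\Z$ its eigenvalues are units in a real quadratic field, so the eigenlines meet the lattice only at $0$ unless $\lambda(f)$ is itself an integer — but one checks directly (as in the proof of Proposition~\ref{alpha-leq-1/2}, lifting to $\SL_2$) that the relevant Salem/quadratic-unit situation forces the form to be isotropic over $\Q$. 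The converse — producing a positive-entropy automorphism once the form represents $0$ — I would get by arithmetic: an isotropic vector gives a splitting of $\NS(X_\R;\Z)$ as a hyperbolic plane (in rank $2$) or hyperbolic plane plus definite part (in rank $3$), and then an element of the orthogonal group of the lattice with spectral radius $>1$ can be written down and realized by an automorphism of the torus via the standard correspondence between Hodge isometries preserving the ample cone and automorphisms (using that for abelian surfaces every such isometry fixing the canonical class — automatic here since $K_X=0$ — is induced by an automorphism, after possibly composing with translations, and translations act trivially on $\NS$; one should cite the Torelli-type statement for complex tori, e.g. \cite{birkenhake-lange}).

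Granting the dichotomy, the two ``$\alpha=1/2$'' situations — case (2)(ii) and case (3) — follow as follows. In case (2)(ii), $\rho(X_\R)=2$ and there is a real hyperbolic automorphism $f$, so Theorem~\ref{formule-exacte} gives $\alpha(X)=\h(f_\R)/\h(f_\C)$, and Proposition~\ref{alpha-leq-1/2} says this ratio is exactly $1/2$; moreover Theorem~\ref{formule-exacte}'s argument, combined with the lower bound from Corollary~\ref{minorer-mvolr}, shows the concordance is achieved. In case (3), $\rho(X_\R)=3$: by Lemma~\ref{isog} the entries $y_1,y_2,y_3$ span a $1$-dimensional $\Q$-space, so after scaling $S$ is a rational matrix and $X$ is isogenous to a product $E\times E$ of a real elliptic curve with itself; such a surface carries an abundance of real hyperbolic automorphisms (coming from $\SL_2(\Z)$ acting on $E\times E$, as in Proposition~\ref{alpha-leq-1/2}), and the intersection form on the rank-$3$ lattice $\NS(X_\R;\Z)$ visibly represents $0$. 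For the concordance value I would argue: the upper bound $\alpha(X)\leq 1/2$ is Proposition~\ref{alpha-leq-1/2}; for the lower bound $\alpha(X)\geq 1/2$ one decomposes $\Amp(X_\R)$ into finitely many $\langle f_*\rangle$-orbits of cones exactly as in Lemma~\ref{nb fini d'orbites}, but now using \emph{one} well-chosen hyperbolic $f$ whose fundamental domain for the action on the ample cone is covered by finitely many very ample classes, apply Lemma~\ref{une orbite} on each such orbit with exponent $\alpha<1/2$, and glue by superadditivity of $\mvol_\R$ and the Minkowski inequality~(\ref{minkowski}) — this shows every $\alpha<1/2$ lies in $\mathcal{A}(X)$, hence $\alpha(X)=1/2$ and is achieved.

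It remains to handle case (2)(i): $\rho(X_\R)=2$ and the form \emph{does not} represent $0$ on $\NS(X_\R;\Z)$. By the dichotomy there are no real hyperbolic automorphisms, and by the Hodge index theorem the form has signature $(1,1)$, so $\NS(X_\R;\R)$ is a hyperbolic plane but with no rational isotropic line; hence $\Nef(X_\R)$, which is contained in the closure of $\Pos(X_\R)$ and bounded by two irrational isotropic rays, is actually \emph{equal} to $\b{\Pos(X_\R)}$ and, crucially, is not finitely generated — so Proposition~\ref{conenef} does not apply directly and we must argue $\alpha(X)=1$ by hand. Here the key point is that \emph{every} nef class of positive square on an abelian surface is ample (the boundary rays being irrational, they contain no integral points other than $0$), and a high multiple of any ample divisor on an abelian surface is very ample and has $h^0$ growing like its square, so each real ample $D$ with $[D]$ suitably divisible lies in a pencil and $\mvol_\R(D)>0$; to get the \emph{uniform} constant $C$ in $\mvol_\R(D)\geq C\vol_\C(D)$ I would use that $\mvol_\R$ is superadditive and that the integral points of the ample cone, modulo the (here trivial) automorphism action, are generated over $\N$ by finitely many classes together with the observation that on $X=\C^2/\Lambda$ a real divisor class of self-intersection $2n>0$ contains a real effective divisor whose real locus is a translate meeting $X(\R)$ in a curve of length bounded below linearly in $\sqrt{n}\sim\sqrt{\vol_\C}$ — wait, linearity in $\vol_\C$ is what we need, and this is exactly where abelian-surface geometry helps: a theta divisor and its translates foliate, and the real locus of a generic real member of $|D|$ has volume comparable to the complex volume because over a torus the real points are ``equidistributed''. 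I expect \textbf{this uniform lower bound in case (2)(i)} to be the main obstacle, since there is no finite polyhedral structure to fall back on; the resolution should come from explicitly parametrizing real divisors in each class as images of subtori/translates and estimating arc-lengths directly, or alternatively by a limiting argument pushing the polyhedral case (Proposition~\ref{conenef}) through approximation of the round cone by rational subcones on which uniform constants can be extracted and shown not to degenerate.
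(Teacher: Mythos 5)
You have the dynamical dichotomy exactly backwards, and this error propagates through your entire case (2) analysis. The correct statement, which follows from the Pell--Fermat discussion in Section~\ref{section-reseaux}, is that for $\rho(X_\R)=2$ the lattice $\NS(X_\R;\Z)$ admits a hyperbolic isometry if and only if the intersection form does \emph{not} represent $0$ (i.e., the discriminant is not a perfect square). When the form does represent $0$, both isotropic lines are rational, the isometry group is finite, the nef cone is \emph{rational} polyhedral (bordered by the two rational isotropic rays), and Proposition~\ref{conenef} gives $\alpha(X)=1$; when the form does not represent $0$, Pell--Fermat yields a hyperbolic lattice isometry, the real Torelli theorem for tori realizes an iterate of it by a real automorphism, and Theorem~\ref{formule-exacte} together with Proposition~\ref{alpha-leq-1/2} gives $\alpha(X)=1/2$. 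You have swapped these two cases: the place where you struggle to prove $\alpha=1$ on a nef cone with irrational boundary rays (calling it ``the main obstacle'') is precisely the situation where the answer is $1/2$, not $1$, and the right tool is the hyperbolic automorphism you have just ruled out. Your stated equivalence also cannot hold as an ``if and only if'': as you yourself observe in case (3), a surface isogenous to $E\times E$ has a form representing $0$ \emph{and} hyperbolic automorphisms, so ``hyperbolic $\Leftrightarrow$ represents $0$'' fails. The paper only needs the one implication and only for $\rho=2$.

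A second, independent gap is in case (3). Lemmas~\ref{une orbite} and~\ref{nb fini d'orbites} are tailored to $\rho(X_\R)=2$, where a single hyperbolic $f_*$ acts cocompactly on the $1$-dimensional hyperbolic space $\P(\Amp(X_\R))$. For $\rho(X_\R)=3$, $\P(\Amp(X_\R))$ is a $2$-dimensional hyperbolic disk, a cyclic group $\langle f_*\rangle$ has infinite covolume there, and no finite collection of ample classes can cover a fundamental domain. The paper instead uses the full $\PSL_2(\Z)$-action (a lattice in $\PSL_2(\R)$), the fundamental triangle with vertices at the classes of the horizontal, vertical and diagonal lines, and the \emph{exact} identity $\vol_\R(D)=C\,\vol_\C(D)^{1/2}$ for rational lines (Lemma~\ref{vol-rat}); this exactness is also what puts $1/2$ itself into $\mathcal{A}(X)$ and hence proves achievedness, which your ``$\alpha<1/2$ and pass to the limit'' scheme cannot provide.
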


We already dealt with the case $\rho(X_\R)=1$ (cf Corollary \ref{rho=1}), so we  focus on the last two cases.


\subsection{Invariance of concordance under isogeny}

\begin{proposition}\label{inv-iso}
Let $X$ and $X'$ be two isogenous real abelian varieties. Then ${\mathcal{A}(X)=\mathcal{A}(X')}$, and consequently $\alpha(X)=\alpha(X')$.
\end{proposition}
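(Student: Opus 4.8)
The plan is to trade the two \emph{a priori} unrelated varieties $X$ and $X'$ for a single isogeny, together with a pullback--pushforward comparison of volumes. Since ``isogenous'' is an equivalence relation, it suffices to prove the one inclusion
\[
\mathcal{A}(X)\subseteq\mathcal{A}(X')
\]
for every real isogeny $f\colon X\to X'$; applying this to an isogeny $X'\to X$ then gives the reverse inclusion, whence $\mathcal{A}(X)=\mathcal{A}(X')$ and, taking suprema, $\alpha(X)=\alpha(X')$.

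So fix such an $f$, of degree $m=\deg(f)$, and exploit the metric-independence of concordance: choose a K\"ahler form $\kappa'$ on $X'(\C)$ and put $\kappa=f^{*}\kappa'$ on $X(\C)$. As $f$ is \'etale, $\kappa$ is again K\"ahler, $f\colon X(\C)\to X'(\C)$ is a local isometry with all fibres of cardinality $m$, and restricting to the real loci (which $f$ maps into one another) gives a local isometry $X(\R)\to X'(\R)$ with fibres of cardinality at most $m$. I will use four elementary facts. (i) If $D'$ is a real ample divisor on $X'$ then $f^{*}D'$ is a real ample divisor on $X$, and $f^{*}\colon\NS(X'_\R;\Z)\to\NS(X_\R;\Z)$ carries $q$-divisible classes to $q$-divisible classes. (ii) By the projection formula $\vol_\C(f^{*}D')=m\,\vol_\C(D')$. (iii) For any real effective divisor $Z$ on $X$ the direct image $f_{*}Z$ is a real effective divisor on $X'$ with $[f_{*}Z]=f_{*}[Z]$, and $\vol_\R(Z(\R))\le m\,\vol_\R((f_{*}Z)(\R))$, because $f$ restricted to the smooth locus of the hypersurface $Z(\R)$ (the singular locus being negligible) is an at-most-$m$-to-one local isometry onto a subset of $(f_{*}Z)(\R)$. (iv) $f_{*}f^{*}=m\cdot\id$ on ${\rm H}^{2}(X(\C);\R)$, again by the projection formula. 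Combining (iii) and (iv): if $[Z]=f^{*}[D']$ then $[f_{*}Z]=m[D']$, so $\vol_\R(Z(\R))\le m\,\mvol_\R(mD')$, and taking the supremum over such $Z$,
\[
\mvol_\R(f^{*}D')\ \le\ m\,\mvol_\R(mD').
\]

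Now let $\alpha\in\mathcal{A}(X)$, witnessed by $C>0$ and $q\in\N^{*}$. For any real ample divisor $D'$ on $X'$ with $[D']$ $q$-divisible, $f^{*}D'$ is real ample on $X$ with $q$-divisible class by (i), so $\mvol_\R(f^{*}D')\ge C\,\vol_\C(f^{*}D')^{\alpha}=C\,m^{\alpha}\vol_\C(D')^{\alpha}$ using (ii); together with the display and with $\vol_\C(mD')=m\,\vol_\C(D')$ this yields
\[
\mvol_\R(mD')\ \ge\ \frac{C}{m}\,\vol_\C(mD')^{\alpha}.
\]
It remains to rephrase this for arbitrary divisors on $X'$. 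Since $\vol_\C$ and $\mvol_\R$ depend only on the numerical class, and since on an abelian variety every real ample class is the class of a real ample divisor (Riemann--Roch and Kodaira vanishing make it effective, so it contains a real divisor, which is ample because ampleness is a numerical condition), every real ample divisor $E'$ on $X'$ with $[E']$ $mq$-divisible satisfies $[E']=m[D']$ for some real ample $D'$ with $[D']$ $q$-divisible (write $[E']=mq\mu$ and take $[D']=q\mu$); hence $\mvol_\R(E')=\mvol_\R(mD')\ge (C/m)\vol_\C(mD')^{\alpha}=(C/m)\vol_\C(E')^{\alpha}$. Thus $\alpha\in\mathcal{A}(X')$, with constant $C/m$ and integer $mq$, which proves $\mathcal{A}(X)\subseteq\mathcal{A}(X')$.

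\textbf{Expected main obstacle.} The delicate point is fact (iii): the direct image $f_{*}Z$ may acquire multiplicities, and $f$ need not be onto on real loci, so the estimate must be organized around the \emph{reduced} real locus and the ``at most $m$-to-one'' bound, and only in the direction $X\to X'$ — one does not get a matching reverse inequality, which is precisely why the argument lands on $mD'$ rather than on $D'$ and needs the final bookkeeping step with divisibility of N\'eron--Severi classes. Everything else is the projection formula and the dictionary between ample classes and ample divisors.
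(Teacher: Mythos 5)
Your argument is correct, but it routes through a noticeably different mechanism than the paper's. The paper orients the isogeny the other way, $f\colon X'\to X$, pulls the metric and divisors back from $X$ to $X'$, and observes that $f^*$ multiplies both $\vol_\C$ and $\mvol_\R$ by constants ($\deg f_\C$, resp.\ $\deg f_\R$), so the inequality defining $\mathcal{A}(X)$ transports directly to the same class on $X'$, the only bookkeeping being the finite index of $f^*\NS(X_\R;\Z)$ inside $\NS(X'_\R;\Z)$. You instead take $f\colon X\to X'$, pull $D'$ back, apply the $\mathcal{A}(X)$ bound to $f^*D'$, and then must descend the real-volume estimate via $f_*$; this is why you land on $mD'$ rather than $D'$ and pay with the extra $mq$-divisibility step at the end. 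What your route buys: you use only one-sided inequalities (the projection formula for $\vol_\C$ and an ``at most $m$-to-one'' bound for $\vol_\R$), so you never need $f^*$ to be a bijection from $\mathcal{V}(D)$ onto $\mathcal{V}(D')$ nor the restriction of the isogeny to real loci to be onto --- two points the paper's exact-formula argument treats somewhat casually. What the paper's route buys: it is shorter and stays on $D'$, with a sharper constant. Two small slips in your write-up, neither fatal: in (iv) the identity $f_*f^*=m\cdot\id$ holds on $\mathrm{H}^2(X'(\C);\R)$, not on $\mathrm{H}^2(X(\C);\R)$ (you apply it correctly anyway); and the Riemann--Roch/Kodaira-vanishing parenthetical is unnecessary, since by the paper's own definition $\NS(X_\R;\Z)$ already consists of classes of real divisors and ampleness is numerical, so every ample class in it is the class of a real ample divisor.
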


\begin{proof}
Since the isogeny relation is symmetric, it is enough to show the inclusion~$\mathcal{A}(X)\subset\mathcal{A}(X')$. Let $f: X' \rightarrow X$ be an isogeny. For $\K=\R$ or $\C$, denote by~$f_\K$ the induced map from $X'(\K)$ to $X(\K)$. We take an arbitrary K\"ahler metric on $X$, and then we take its pullback on $X'$, so that $f$ is locally an isometry for the respective metrics.

Fix any $\alpha\in\mathcal{A}(X)$. There exist $C>0$ and $q\in\N^*$ such that any real ample divisor $D$ on $X$, whose Chern class is $q$-divisible, satisfies $\mvol_\R(D)\geq C\vol_\C(D)^\alpha$. Since $f^*:\NS(X_\R;\Z)\rightarrow\NS(X'_\R;\Z)$ is an injective homomorphism, its image has finite index $n$. Let $D'$ be a real ample divisor on $X'$ whose class is $nq$-divisible. Then there is a real ample divisor $D$ on $X$ with $[D']=[f^*D]$, and furthermore $[D]$ is $q$-divisible.

Any point on the curve $D(\R)$ has exactly $\deg(f_\R)$ preimages, hence  $\vol_\R(f^*D)=\deg(f_\R)\vol_\R(D)$ by the choice of the metrics. Since $f^*$ realizes a bijective map between $\mathcal{V}(D)$ and $\mathcal{V}(D')$, we deduce, taking the upper bound on $\mathcal{V}(D)$, that ${\mvol_\R(D')=\deg(f_\R)\mvol_\R(D)}$.

By the same argument, we also have $\vol_\C(D')=\deg(f_\C)\vol_\C(D)$. So if we set~$C'=C\deg(f_\R)/{\deg(f_\C)}^\alpha$, we obtain $\mvol_\R(D') \geq C'\vol_\C(D')^\alpha$. This shows that the exponent $\alpha$ is contained in $\mathcal{A}(X')$.
\end{proof}


\subsection{Picard number 2}


\subsubsection{Hyperbolic rank 2 lattices}\label{section-reseaux}

By definition, a \emph{lattice} is a free abelian group $L$ of finite rank, equipped with a nondegenerate symmetric bilinear form $\varphi$ taking integral values. We say the lattice is \emph{hyperbolic} when the signature of the induced quadratic form on $L\otimes\R$ is $(1,\rg(L)-1)$. The determinant of the matrix of $\varphi$ in a base of $L$ is the same for all bases. Its absolute value is a positive integer, called the \emph{discriminant} of the lattice.

Let $(L,\varphi)$ be a rank 2 hyperbolic lattice. Then $L\otimes\R$ has exactly two isotropic lines. The discriminant $\delta$ is a perfect square if and only if the quadratic form associated to $\varphi$ represents $0$, which means that there exists a nonzero isotropic point in $L$, or to say it otherwise both isotropic lines in $L\otimes\R$ are rational.

Suppose that $\delta$ is no perfect square. The study of Pell--Fermat equation then implies the existence of a \emph{hyperbolic isometry} of $L$, that is, an isometry whose spectral radius is greater than $1$. Such an isometry spans a finite index subgroup of the isometries of $L$. To be more precise, the group $\SO(L,\varphi)$ of direct isometries of $L$ (those with determinant $1$) is an abelian group isomorphic to $\Z\times\Z/2\Z$, and any infinite order element in $\SO(L,\varphi)$ is hyperbolic.

Conversely if $\delta$ is a perfect square, there is no hyperbolic isometry, and the isometry group is finite. More precisely,  $\SO(L,\varphi)=\{\id,-\id\}\simeq\Z/2\Z$.

\begin{example}
Let $X$ be a real algebraic surface with $\rho(X_\R)=2$. Then the group~$\NS(X_\R;\Z)$, equipped with the intersection form, is a rank 2 hyperbolic lattice.
\end{example}


\subsubsection{Surfaces with real elliptic fibrations}

Let $X$ and $Y$ be two complex algebraic varieties. An \emph{elliptic fibration} on $X$ is a holomorphic map $\pi:X\to Y$ that is proper and surjective, and such that the generic fiber is an elliptic curve. When the varieties $X$, $Y$ and the morphism $\pi$ are defined over $\R$, we call the elliptic fibration \emph{real}.

\begin{proposition}
Let $X$ be a real abelian surface with $\rho(X_\R)=2$. The following are equivalent:
\begin{itemize}
\item[(1)]
the intersection form on $\NS(X_\R;\Z)$ represents $0$;
\item[(2)]
there exists a real elliptic fibration on $X$;
\item[(3)]
$X$ is isogenous to the product of two elliptic curves $E_1$ and $E_2$.
\end{itemize}
In this case, the concordance of $X$ equals $1$, and it is achieved.
\end{proposition}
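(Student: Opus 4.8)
The plan is to prove the three equivalences by a cyclic argument (1)$\Rightarrow$(3)$\Rightarrow$(2)$\Rightarrow$(1), and then to deduce the statement about concordance from the machinery already developed in the previous sections, most notably Proposition \ref{conenef} and Lemma \ref{isog}.

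First I would set up coordinates using Lemma \ref{isog}: up to isogeny we may assume $X=\C^2/\Lambda$ with $\Lambda=\Z^2\oplus iS\Z^2$, $S=\begin{pmatrix}y_1&y_3\\y_3&y_2\end{pmatrix}$ symmetric positive definite, and $\rho(X_\R)=4-\dim_\Q(\Q y_1+\Q y_2+\Q y_3)=2$, so the $y_i$ span a $2$-dimensional $\Q$-vector space. For (1)$\Leftrightarrow$(3): I would exhibit an explicit basis of $\NS(X_\R;\Z)$ and compute the intersection form. Real line bundles on a real abelian surface correspond to real alternating forms $E$ on $\Lambda$ whose associated Hermitian form has the right positivity/compatibility with $\sigma$; the intersection form on $\NS$ is the Pfaffian-type pairing. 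Carrying out this computation, the discriminant of the rank-$2$ lattice $\NS(X_\R;\Z)$ will be (up to a square) a concrete expression in $y_1,y_2,y_3$, and the intersection form represents $0$ exactly when this discriminant is a perfect square (cf.\ \textsection\ref{section-reseaux}). On the other hand $X$ is isogenous to $E_1\times E_2$ precisely when the period matrix $iS$ is, up to the $\GL_2(\Z)$-action, block diagonalizable over $\Q$, i.e.\ when $S$ (equivalently the $\Q$-line structure determined by the $y_i$) splits; matching these two conditions gives (1)$\Leftrightarrow$(3).

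For (3)$\Rightarrow$(2): if $X$ is isogenous to $E_1\times E_2$, then by Proposition \ref{inv-iso} I may assume $X=E_1\times E_2$ outright (isogeny preserves both the Picard number and $\mathcal A(X)$), and then the first projection $\pi:E_1\times E_2\to E_1$ is a real elliptic fibration. For (2)$\Rightarrow$(1): a real elliptic fibration $\pi:X\to Y$ gives a real divisor class $[F]$ (the class of a fiber) with $F^2=0$ and $[F]\neq0$ in $\NS(X_\R;\Z)$, since distinct fibers are disjoint; hence the intersection form represents $0$. This closes the cycle.

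For the concordance statement, in the situation of (1) the rank-$2$ hyperbolic lattice $\NS(X_\R;\Z)$ has its two isotropic lines rational (discriminant a perfect square), so $\Nef(X_\R)$ is a $2$-dimensional rational polyhedral cone, bordered by two rational isotropic rays $\R^+\theta_1$, $\R^+\theta_2$ with $\theta_i^2=0$. It remains to check $\mvol_\R(D_j)>0$ for divisors $D_j$ representing $\theta_1,\theta_2$: each $\theta_j$ is nef with $\theta_j^2=0$ and (being a boundary ray of the ample cone of an abelian surface) is semiample, so a suitable multiple defines precisely a real elliptic fibration; the generic real fiber is a real elliptic curve, and after possibly replacing $D_j$ by a numerically equivalent real divisor whose real locus is nonempty (using the translation action on the abelian surface to move the fibration so that a real fiber exists), a real fiber has positive length, giving $\mvol_\R(D_j)>0$. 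Then Proposition \ref{conenef} yields $\alpha(X)=1$ with the concordance achieved; invariance under isogeny (Proposition \ref{inv-iso}) transports this back to the original $X$.

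The main obstacle I expect is the explicit lattice-theoretic bookkeeping in (1)$\Leftrightarrow$(3): correctly identifying $\NS(X_\R;\Z)$ with an explicit rank-$2$ sublattice, computing its discriminant in terms of $y_1,y_2,y_3$, and matching ``discriminant is a square'' with ``$S$ splits over $\Q$ up to $\GL_2(\Z)$'' — this requires care with the real structure (the sign subtlety recorded in \eqref{pb-sign}) and with the precise description of real polarizations from \cite{silhol}. The dynamical/volume half is comparatively soft once one knows the isotropic rays are rational and semiample; the only mild subtlety there is ensuring the fiber divisor can be chosen with nonempty real locus, which follows from translating along $X(\R)$.
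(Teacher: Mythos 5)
Your cyclic scheme $(1)\Rightarrow(3)\Rightarrow(2)\Rightarrow(1)$ is logically sound and the two ``easy'' arrows are fine: $(2)\Rightarrow(1)$ via the isotropic fiber class, and $(3)\Rightarrow(2)$ by composing the isogeny with the first projection (though note you cannot literally ``assume $X=E_1\times E_2$'' here via Proposition~\ref{inv-iso}, which is only a statement about $\mathcal A(X)$; you need the composed map $\pi_1\circ f$ as the fibration on $X$ itself). However, the route you chose places all the weight on $(1)\Rightarrow(3)$, which you propose to do by an explicit lattice computation in the coordinates of Lemma~\ref{isog} --- identifying $\NS(X_\R;\Z)$, computing its discriminant as a function of $y_1,y_2,y_3$, and matching ``discriminant a square'' with ``$S$ splits over $\Q$.'' You correctly flag this as the main obstacle, and indeed you do not carry it out; as written this is a genuine gap.

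The paper sidesteps that computation entirely by going around the triangle the other way, $(1)\Rightarrow(2)\Rightarrow(3)\Rightarrow(1)$, so that the only nontrivial arrow becomes $(1)\Rightarrow(2)$, which is handled geometrically: a primitive isotropic class $\theta$, after a sign change, lies in $\Nef(X_\R)=\overline{\Pos(X_\R)}$ and is the class of a real effective irreducible divisor $D$; the genus formula gives arithmetic genus $1$, and since abelian surfaces contain no rational curves, $D$ is a real elliptic curve; translating $D$ through the origin makes it a real subtorus, and the quotient map $X\to X/D$ is the desired real elliptic fibration. Then $(2)\Rightarrow(3)$ is just Poincar\'e reducibility, and $(3)\Rightarrow(1)$ is your observation about the pullback of $[E_1\times\{0\}]$. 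This avoids any explicit period-matrix bookkeeping, and you may want to note that the analogue of your ``discriminant is a square'' criterion is implicitly encoded in the equivalence $(1)\Leftrightarrow(2)$ rather than verified by hand.

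For the concordance, you apply Proposition~\ref{conenef} directly to $X$, which forces you to argue that both extremal rays of $\Nef(X_\R)$ are represented by divisors with positive real volume --- you invoke semiampleness of nef isotropic classes on abelian surfaces and then argue about existence of a real fiber, both of which are correct but fiddly (in particular a real fiber with nonempty real locus exists simply because the fiber through any point of $X(\R)\neq\emptyset$ works; no translation argument is needed). The paper avoids this by using $(3)$ together with isogeny invariance (Proposition~\ref{inv-iso}): on $E_1\times E_2$ the nef cone is visibly spanned by $[E_1\times\{0\}]$ and $[\{0\}\times E_2]$, which trivially have positive real volume, and then $\mathcal A(X)=\mathcal A(E_1\times E_2)=[0,1]$. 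Your approach works once the details are filled in, but the paper's reduction to the product case is the cleaner and shorter path, and in particular does not require the semiampleness digression.
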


\begin{rema}
The elliptic curves $E_1$ and $E_2$ cannot be isogenous, for otherwise the Picard number would be $3$.
\end{rema}

\begin{proof}
$(1)\Rightarrow (2)$: Let $\theta$ a nonzero primitive point in $\NS(X_\R;\Z)$ such that ${\theta^2=0}$. After changing $\theta$ into $-\theta$ if necessary, there exists a real effective and irreducible divisor $D$ whose class is $\theta$ (here we use the fact that $\Nef(X_\R)$ is the closure of~$\Pos(X_\R)$; cf \cite[1.5.17]{lazarsfeld}). By the genus formula, the arithmetic genus of~$D$ is $1$. Since an abelian surface does not have any rational curve, $D$ must be a real elliptic curve. We may suppose that $D$ goes through $0$ (if not, we translate it and obtain an equivalent divisor), and thus it is a real subtorus. Now the canonical projection ${\pi:X\to X/D}$ is a real elliptic fibration.

$(2)\Rightarrow (3)$ follows from the Poincar\'e reducibility theorem (see \cite[\textsection VI 8.1]{debarre}).

$(3)\Rightarrow (1)$: Let $f:X\to E_1\times E_2$ be an isogeny. The effective divisor $D$ given by~$f^*(E_1\times\{0\})$ has self-intersection $0$, so the intersection form represents $0$.

As the nef cone of $E_1\times E_2$ is spanned by $[E_1\times\{0\}]$ and $[\{0\}\times E_2]$, the interval $\mathcal{A}(E_1\times E_2)$ is equal to $[0,1]$, by Proposition \ref{conenef}. By invariance under isogeny, we also have $\mathcal{A}(X)=[0,1]$.
\end{proof}


\subsubsection{Surfaces with no real elliptic fibration}

\begin{theorem}\label{delta}
Let $X$ be a real abelian surface with $\rho(X_\R)=2$. Assume that the intersection form on $\NS(X_\R;\Z)$ does not represent $0$. Then
\begin{itemize}
\item[(1)]
there exists a real hyperbolic type automorphism on $X$;
\item[(2)]
the concordance of $X$ equals $1/2$ and it is achieved.
\end{itemize}
\end{theorem}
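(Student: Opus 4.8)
The plan is to prove the two claims of Theorem \ref{delta} in turn, the first being purely arithmetic and the second following from the earlier machinery.

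\emph{Existence of a real hyperbolic automorphism.} First I would invoke the example at the end of \textsection\ref{section-reseaux}: since $\rho(X_\R)=2$, the lattice $(\NS(X_\R;\Z),\cdot)$ is a rank $2$ hyperbolic lattice, and by hypothesis its intersection form does not represent $0$, so its discriminant is not a perfect square. By the Pell--Fermat analysis recalled there, $\SO(\NS(X_\R;\Z))$ contains an infinite-order element $\phi$, which is automatically hyperbolic, i.e.\ has spectral radius $\lambda>1$. The work is then to realize $\phi$ (or a power of it) geometrically as $f_*$ for some real automorphism $f$ of $X$. For abelian surfaces this is classical: a real automorphism fixing the origin lifts to a matrix in $\GL_2(\Z[\text{something}])$, and more to the point, one uses the description of $X$ up to isogeny from Lemma \ref{isog} together with invariance of concordance under isogeny (Proposition \ref{inv-iso}) — so it suffices to find a real hyperbolic automorphism on one surface in the isogeny class. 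Concretely, a Hodge isometry of $\NS$ preserving the positive cone and the ample cone, and compatible with the real structure, is induced by an automorphism of the abelian surface by the Torelli-type statement for tori (every Hodge isometry of $\mathrm{H}^2$ preserving a polarization up to the Weyl group comes from an automorphism); one checks that $\phi$ or $\phi^2$ preserves the appropriate chamber so that no reflection correction is needed, and that it commutes with $\sigma^*$ since it lives in $\NS(X_\R;\Z)$. The resulting $f$ has $\h(f_\C)=\log\lambda>0$, so $f$ is of hyperbolic type. I expect this realization step — producing an honest automorphism from the lattice isometry, with the real structure taken into account — to be the main obstacle, though it should be dispatched by citing Torelli for abelian surfaces plus the isogeny reduction.

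\emph{Computation of the concordance.} Once a real hyperbolic type $f$ is in hand, I apply Theorem \ref{formule-exacte}: since $\rho(X_\R)=2$, we get $\alpha(X)=\h(f_\R)/\h(f_\C)$ and this equals the limit ratio exactly. Then Proposition \ref{alpha-leq-1/2}, which is special to tori, gives $\h(f_\C)=2\,\h(f_\R)$ (after possibly replacing $f$ by $f^2$, which changes neither side of the ratio). Hence $\alpha(X)=1/2$. The achievement of the concordance is not provided by Theorem \ref{formule-exacte} directly (the remark after it notes this is open in general), so here I would instead argue achievement by hand in this abelian setting: by Proposition \ref{inv-iso} I may assume $X=\C^2/(\Z^2\oplus iS\Z^2)$ as in Lemma \ref{isog}, pick a very ample real divisor $D_0$, and use Corollary \ref{minorer-mvolr} (for $f$ and $f^{-1}$) together with the two-dimensionality of $\NS(X_\R;\R)$ — exactly as in the proof of Theorem \ref{formule-exacte}, Lemmas \ref{une orbite} and \ref{nb fini d'orbites} — to obtain $\mvol_\R(D)\geq C\,\vol_\C(D)^{1/2}$ for all $q$-divisible real ample $D$, using Minkowski's inequality (\ref{minkowski}) with the \emph{endpoint} exponent $\alpha=1/2$, which is legitimate. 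This shows $1/2\in\mathcal{A}(X)$, so the concordance $\alpha(X)=1/2$ is achieved. Finally, combining with the earlier cases ($\rho=1$ gives $\alpha=1$ and no hyperbolic automorphisms since $\NS(X_\R;\R)$ is one-dimensional; $\rho=2$ with $0$ represented gives $\alpha=1$ and again no hyperbolic automorphism, as a hyperbolic isometry would force the isotropic lines to be irrational) yields the stated equivalence: $\alpha(X)=1/2$ iff $X$ admits a real hyperbolic type automorphism.
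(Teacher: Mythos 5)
Your overall strategy is aligned with the paper's: produce a hyperbolic isometry of the rank-$2$ lattice $\NS(X_\R;\Z)$, promote it to a real automorphism via the Torelli theorem, then combine Theorem~\ref{formule-exacte} with Proposition~\ref{alpha-leq-1/2} to identify the value $1/2$, and finally establish achievement separately. But two of your steps have genuine gaps.

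\emph{The realization of the automorphism.} You suggest reducing modulo isogeny (Lemma~\ref{isog}, Proposition~\ref{inv-iso}) to construct the automorphism, but isogeny does not preserve the automorphism group: finding a hyperbolic automorphism on some $X'$ isogenous to $X$ does not give one on $X$, which is what assertion~(1) requires. Moreover, even on $X$ itself, the Torelli theorem (Theorem~\ref{torelli-tori}) needs a Hodge isometry of the full lattice ${\rm H}^2(X(\C);\Z)$, whereas the Pell--Fermat argument only produces an isometry of the primitive sublattice $\NS(X_\R;\Z)$. Passing from one to the other is exactly the content of the paper's Lemma~\ref{lemme-alg-com}: take the orthogonal complement $L_2$ of $L_1=\NS(X_\R;\Z)$, note that $L_1\oplus L_2$ has finite index in ${\rm H}^2(X(\C);\Z)$, and extend a suitable power $\phi_1^k\oplus\id_{L_2}$ to the whole lattice. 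This algebraic step cannot be waved away by ``citing Torelli plus the isogeny reduction''; you need it to produce the input that Torelli accepts.

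\emph{Achievement of the concordance.} This is the more serious issue. You propose to rerun the proof of Theorem~\ref{formule-exacte} at the endpoint $\alpha=1/2$, invoking Corollary~\ref{minorer-mvolr} and Minkowski's inequality. Minkowski at $\alpha=1/2$ is indeed fine, but Corollary~\ref{minorer-mvolr} is \emph{strict}: it gives $\mvol_\R(f_*^nD)\geq C\lambda^n$ only for $\lambda<\exp(\h(f_\R))$. At the endpoint $\alpha=1/2$, the quantity one needs to compare with is $\lambda(f)^{1/2}=\exp(\tfrac12\h(f_\C))=\exp(\h(f_\R))$, so the strict hypothesis of the corollary fails exactly where you need it. Lemma~\ref{une orbite} therefore does not go through verbatim, and the argument has a hole. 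The paper closes it with an unnumbered lemma specific to abelian surfaces: lift $f$ to the linear map $F\in\GL_2(\Z)$ on $\R^2$, choose coordinates in which the eigendirections $\mathcal{D}^\pm$ are orthogonal, project a smooth arc $\tilde\gamma$ of a very ample divisor onto $\mathcal{D}^+$, and use that the projected length is positive (because $\mathcal{D}^-$ has irrational slope and cannot contain a real algebraic arc) to get the \emph{exact} growth $\lo(F^n(\tilde\gamma))\geq\lambda^n\lo(p(\tilde\gamma))$ with $\lambda=\exp(\h(f_\R))$. This torus-specific improvement is what makes the endpoint work; without it, your proof of achievement is incomplete.
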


We use the following result (see, for instance, \cite{X}):

\begin{theorem}[Torelli theorem for tori]\label{torelli-tori}
Let $X$ be a real abelian surface and let $\phi$ be a Hodge isometry of ${\rm H}^2(X(\C);\Z)$ that preserves the ample cone and has determinant $+1$. Then there exists a \emph{complex} automorphism $f$ of $X(\C)$, unique up to a sign, such that $f_*=\phi$. If moreover $\phi$ commutes with the involution $\sigma_X^*$, then~$f$ or $f^2$ is a \emph{real} automorphism.
\end{theorem}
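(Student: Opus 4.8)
The plan is to move the problem from $H^2$ down to $H^1$, where the relevant datum is just a weight-one Hodge structure, i.e.\ essentially the complex torus itself. Write $L=H^1(X(\C);\Z)$, a rank-$4$ lattice with Hodge decomposition $L_\C=L^{1,0}\oplus L^{0,1}$. For an abelian surface one has the canonical identification $H^2(X(\C);\Z)=\bigwedge^2 L$, under which the cup product becomes the wedge pairing $\bigwedge^2 L\times\bigwedge^2 L\to\bigwedge^4 L\cong\Z$ and the Hodge pieces are $H^{2,0}=\bigwedge^2 L^{1,0}$, $H^{0,2}=\bigwedge^2 L^{0,1}$, $H^{1,1}=L^{1,0}\otimes L^{0,1}$. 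Since a complex torus is recovered from the weight-one Hodge structure on its $H^1$, passing to the action on $H^1$ identifies the group of automorphisms of $X(\C)$ fixing the origin with the group of Hodge automorphisms of $L$, and in this dictionary $f_*=\bigwedge^2\psi$, where $\psi\in\GL(L)$ is the action of $f$ on $H^1$ (its determinant is automatically $+1$, as $f$ is orientation-preserving). Morphisms, and hence commutation relations, are detected on $H^1$, a fact we shall use for the real refinement.

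The first and main step is to lift $\phi$ to $H^1$: to find $\psi\in\GL(L)$ preserving $L^{1,0}$ with $\bigwedge^2\psi=\phi$. This is the Torelli theorem for abelian surfaces, whose classical proof I would follow. The point special to dimension $2$ is that a bivector in $\bigwedge^2 L_\C$ is decomposable exactly when it is isotropic for the wedge form; hence the isometry $\phi_\C$ automatically preserves the cone of decomposable bivectors, so it induces an automorphism of the Pl\"ucker quadric $\mathrm{Gr}(2,L_\C)\subset\P(\bigwedge^2 L_\C)$. Such an automorphism comes either from an element of $\mathrm{PGL}(L_\C)$ or from a correlation $L_\C\xrightarrow{\sim}L_\C^{*}$; correlations lie in the non-identity component of the orthogonal group, so the hypothesis $\det\phi=+1$ forces the first case, and one obtains $\phi_\C=\bigwedge^2\psi_\C$ with $\psi_\C\in\GL(L_\C)$, unique up to sign because the kernel of $A\mapsto\bigwedge^2 A$ on $\GL_4$ is $\{\pm\id\}$. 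Since $\phi$ fixes the lines $H^{2,0}=\bigwedge^2 L^{1,0}$ and $H^{0,2}=\bigwedge^2 L^{0,1}$, the induced projectivity fixes the points $[L^{1,0}]$ and $[L^{0,1}]$ of the Grassmannian, so $\psi_\C$ preserves the Hodge decomposition; a Galois-descent argument (the unordered pair $\{\psi_\C,-\psi_\C\}$ is defined over $\Q$) then produces $\psi\in\GL(L\otimes\Q)$ with $\det\psi=1$. The one genuinely delicate point---and the main obstacle---is integrality: one must show $\psi(L)=L$, and this is exactly where the two hypotheses do their work, preservation of the ample cone ruling out the rational lifts that carry the positive cone to its opposite, and together with $\det\phi=+1$ pinning down the integral lift. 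I would invoke this from the classical theory (see \cite{X}) rather than reprove it. Granting it, $\psi\in\GL(L)$ defines a complex automorphism $f$ of $X(\C)$ fixing $0$ with $f_*=\bigwedge^2\psi=\phi$; and if $f'$ is another such automorphism, then $f_*'=\phi=f_*$ forces $(f')^{-1}f$ to act on $H^1$ by a linear map with trivial second exterior power, hence by $\pm\id$, so $f'=f$ or $f'=-f$ (the latter being $f$ composed with the inversion $x\mapsto -x$). This yields existence and uniqueness up to a sign.

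For the real refinement, note that $\sigma_X^{*}$ acts on $H^1=L$ by a $\Z$-linear involution $s$ exchanging $L^{1,0}$ and $L^{0,1}$, and on $H^2=\bigwedge^2 L$ by $\bigwedge^2 s$. The hypothesis that $\phi$ commutes with $\sigma_X^{*}$ reads $\bigwedge^2(\psi s\psi^{-1})=\bigwedge^2 s$, hence, by the same kernel computation, $\psi s\psi^{-1}=\pm s$, i.e.\ either $\psi s=s\psi$ or $\psi s=-s\psi$. In the first case $\psi$ commutes with $s$, which, read back through the dictionary above, is precisely the statement $\sigma_X\circ f\circ\sigma_X=f$, so $f$ is a real automorphism. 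In the second case one computes $\psi^{2}s=\psi(\psi s)=\psi(-s\psi)=-(\psi s)\psi=-(-s\psi)\psi=s\psi^{2}$, so $\psi^{2}$ commutes with $s$ and hence $f^{2}$ is real. In either case $f$ or $f^{2}$ is a real automorphism, which completes the argument.
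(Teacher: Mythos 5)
Your proposal is correct, and in substance it lines up with the paper's treatment: the paper does not prove the complex statement either — it simply cites the classical Torelli theorem for tori (the reference \cite{X}) — and proves only the last sentence, via the remark that uniqueness up to sign pins down $\sigma_X\circ f\circ\sigma_X$. Your case analysis on ${\rm H}^1$ (namely $\psi s\psi^{-1}=\pm s$ because the kernel of $A\mapsto\wedge^2A$ is $\{\pm\id\}$, so $\sigma_X\circ f\circ\sigma_X$ equals $f$ or $(-\id_X)\circ f$, and in the latter case $f^2$ commutes with $\sigma_X$) is exactly that uniqueness argument transported to weight one, and it is the form that makes the conclusion ``$f$ or $f^2$ is real'' immediate. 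What you add beyond the paper is an outline of the classical proof of the lifting itself, through the identification ${\rm H}^2=\wedge^2{\rm H}^1$ and automorphisms of the Pl\"ucker quadric; that outline is sound, but be aware that your Galois-descent step conceals a sign obstruction (if $\overline{\psi_\C}=-\psi_\C$ one only obtains a real map whose $\wedge^2$ is $-\phi$), and this, together with integrality of the lift, is precisely where the ample-cone and determinant hypotheses intervene. You explicitly defer those delicate points to the classical reference, which is no weaker than what the paper does (it defers the entire complex statement), so there is no gap relative to the paper — just a more detailed sketch of the part both of you ultimately quote.
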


\begin{rema}
To prove the last part, it is enough to remark that $\sigma_X\circ f\circ\sigma_X=\pm f^{-1}$ by the uniqueness part.
\end{rema}

\begin{lemma}\label{lemme-alg-com}
Let $L$ be an free abelian group of finite rank, $L'$ be a finite index subgroup of $L$ and $\phi'$ be an automorphism of $L'$. Then some positive iterate $\phi'^k$ extends to an automorphism $\phi$ on $L$.
\end{lemma}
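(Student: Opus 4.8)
The plan is to reduce to a statement about finite abelian groups. Let $n = [L : L']$, so that $nL \subseteq L'$; restricting the ambient to $L/nL$ we see $L'/nL$ is a subgroup, and $\phi'$ descends to an automorphism of the finite group $L'/nL$. Rather than work there directly, I would use the following elementary observation: the group $\Aut(L/nL)$ is finite, so $\phi'$ acting on $L'/nL$ (or better, on $L'$ modulo a suitable characteristic subgroup) has finite order modulo the subgroup of automorphisms that fix everything sufficiently deeply. Concretely, the plan is:

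First I would set $n = [L:L']$ and note $nL \subseteq L' \subseteq L$, so $L'$ sits between $L$ and $nL$. Consider the finite group $G = L/nL$ and its subgroup $H = L'/nL$. The automorphism $\phi'$ of $L'$ preserves $nL$ (since $nL$ is characteristic in $L'$: it is $n$ times the group $\frac1n$-divisible part... more carefully, $nL = \{ n x : x \in L\}$ and one checks $\phi'(nL)\subseteq nL$ because $\phi'$ extends $\mathbf{Q}$-linearly to $L\otimes\mathbf{Q}$ and $nL$ is intrinsically $\{v \in L' : v/n \in L'\otimes\mathbf{Q} \text{ lies in the }\mathbf{Z}\text{-span...}\}$ — the cleanest formulation is that $nL$ is the set of $v\in L'$ divisible by $n$ in $L'$, no — I will instead argue as follows). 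Since $\phi'\in\Aut(L')$ and $L'$ has finite rank, $\phi'$ induces an automorphism $\bar\phi'$ of the finite group $L'/nL'$; but $\Aut(L'/nL')$ is finite, so there is $k\geq 1$ with $\phi'^k \equiv \mathrm{id}$ on $L'/nL'$, i.e. $\phi'^k - \mathrm{id}$ maps $L'$ into $nL'$. Since $nL' \subseteq nL \subseteq L'$, the map $\psi = \phi'^k$ satisfies $\psi(x) - x \in nL$ for all $x\in L'$, and in particular $\psi(nL)\subseteq nL$ (as $nL \subseteq L'$, $\psi(nx) = nx + (\text{element of } nL)$).

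Now the key step: I claim $\psi = \phi'^k$ extends to an endomorphism $\phi$ of $L$. Define $\phi$ on $L$ by $\phi(v) = v + \frac1n(\psi(nv) - nv)$ for $v\in L$. This makes sense because $\psi(nv) - nv \in nL$ by the previous paragraph (applied to $nv \in nL \subseteq L'$), so $\frac1n(\psi(nv)-nv)\in L$, and $\phi(v)\in L$. One checks $\phi$ is a group homomorphism $L\to L$, that $\phi|_{L'} = \psi$ (since for $x\in L'$, $n\phi(x) = nx + \psi(nx) - nx = \psi(nx) = n\psi(x)$, hence $\phi(x) = \psi(x)$ as $L$ is torsion-free), and similarly building the inverse extension from $\psi^{-1} = \phi'^{-k}$ one gets a two-sided inverse, so $\phi\in\Aut(L)$. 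Thus $\phi'^k$ extends to $\phi\in\Aut(L)$, which is exactly the claim.

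The main obstacle is getting the divisibility bookkeeping right: one must be careful that $\psi(nv) - nv$ genuinely lies in $nL$ (not merely in $L'$) for \emph{all} $v \in L$, including $v \notin L'$. This is where the choice of $k$ with $\phi'^k \equiv \mathrm{id} \pmod{nL'}$ is essential, combined with $nL' \subseteq nL$; and one also needs $\psi$ to preserve $nL$ so that the analogous construction applied to $\phi'^{-k}$ produces the inverse. All of this is routine linear algebra over $\mathbf{Z}$ once the finiteness of $\Aut(L'/nL')$ is invoked, so I expect no serious difficulty beyond careful indexing.
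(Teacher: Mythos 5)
Your proof is correct and follows essentially the same route as the paper: reduce $\phi'$ modulo $nL'$ to find $k$ with $\phi'^k\equiv\id$, then extend via $\phi(v)=\frac{1}{n}\phi'^k(nv)$, which is the same map the paper builds by conjugating $\phi'^k|_{qL}$ by multiplication-by-$q$. The only cosmetic differences are that the paper uses the exponent $q$ of $L/L'$ instead of the index $n$, and gets the automorphism property for free from the conjugation rather than by constructing the inverse extension separately.
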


\begin{proof}
Denote by $q$ the exponent of the group $L/L'$, so that  $qL\subset L'$. As $\phi'$ projects to an automorphism of $L'/qL'$ that has finite order $k$, it follows that~${\phi'^k(qL)\subset qL}$. Let $\mu_q:L\to qL$ be the isomorphism defined by $\theta\mapsto q\theta$. Then the automorphism~${\phi=\mu_q^{-1}\circ{\phi'^k}_{|qL}\circ\mu_q}$ satisfies the desired property.
\end{proof}

\begin{proof}[Proof of Theorem \ref{delta}]
Since the intersection form does not represent $0$, there exists a hyperbolic isometry $\phi_1$ of $L_1=\NS(X_\R;\Z)$ (cf \textsection \ref{section-reseaux}). Replacing $\phi_1$ by~$\phi_1^2$ if necessary, we may suppose that $\phi_1$ preserves the cone $\Amp(X_\R)$ and that $\det(\phi_1)=1$. Denote by $L_2$ the orthogonal of $L_1$ in $L={\rm H}^2(X(\C);\Z)$, and by~$L'$ the direct sum $L_1\oplus L_2$. The subgroup $L'$ has finite index in $L$, and so, by Lemma~\ref{lemme-alg-com},~${\phi_1^k\oplus \id_{L_2}}$ extends to an automorphism $\phi$ on ${\rm H}^2(X(\C);\Z)$ for some~$k\in\N^*$. It is clear by construction that $\phi$ satisfies the assumptions of Theorem \ref{torelli-tori}. Thus there exists a real automorphism $f$ on $X$ such that $f_*=\phi^2$. Its entropy is positive, as a multiple of the spectral logradius of $\phi_1$.

The equality $\alpha(X)=1/2$ follows from Theorem \ref{formule-exacte} and Proposition \ref{alpha-leq-1/2}. In order to show that the concordance is achieved, we replace $\alpha$ by $\frac{1}{2}=\frac{\h(f_\R)}{\h(f_\C)}$ inside the proof of Theorem \ref{formule-exacte} by using the following lemma, which improves the inequality of Corollary \ref{minorer-mvolr}.
\end{proof}

\begin{lemma}
Let $f$ be a real automorphism of a real abelian surface $X$. Assume that $\lambda=\exp(\h(f_\R))>1$. Then for all very ample real divisors $D$ on $X$, there exists $C>0$ such that 
\begin{equation}
\mvol_\R(f_*^nD) \geq C \lambda^n
\end{equation}
for all $n\in\N$.
\end{lemma}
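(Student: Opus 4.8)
The plan is to upgrade the exponential lower bound of Corollary~\ref{minorer-mvolr} from an arbitrary $\lambda<\exp(\h(f_\R))$ to the borderline value $\lambda=\exp(\h(f_\R))$ itself, exploiting the linear-algebraic rigidity of tori. The key point is that on an abelian surface we have total control of the dynamics: after passing to a suitable iterate (which only changes $\h(f_\R)$ by a known factor and hence is harmless for the statement) the lift $F:\C^2\to\C^2$ is in $\SL_2(\Z)$ with real eigenvalues $\lambda=\exp(\h(f_\R))$ and $\lambda^{-1}$, and $f_\R:X(\R)\to X(\R)$ is, on each connected component, an affine Anosov diffeomorphism of a $2$-torus. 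So unlike in the general surface case, there is no loss coming from Katok horseshoes: $f_\R$ has a global expanding foliation by the unstable lines of $F$, on which lengths are multiplied \emph{exactly} by $\lambda$ at every step.

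Concretely, first I would reduce to $X=\C^2/\Lambda$ with $\Lambda=\Z^2\oplus iS\Z^2$ (Lemma~\ref{isog}) and, replacing $f$ by a power, arrange that $F\in\SL_2(\R)$ lifts $f$ with eigenvalues $\lambda,\lambda^{-1}$, $\lambda>1$, and eigenlines $\ell^+,\ell^-\subset\R^2\subset\C^2$; note $\ell^\pm$ need not be rational but that is irrelevant. Then I would pick the unstable direction $\ell^+$ and observe that for \emph{any} point $P\in X(\R)$ the local affine leaf $L^+_P$ through $P$ in direction $\ell^+$ is an arc on which $f_\R$ acts, in the flat metric lifted from the Euclidean metric on $\C^2$, by exact dilation: $\lo\bigl(f_\R^n(L^+_P\cap B)\bigr)=\lambda^n\lo(L^+_P\cap B)$ for a fixed small segment. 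Since the chosen metric on $X(\C)$ is only comparable to (not equal to) this flat one, one picks up two-sided multiplicative constants, which is exactly what the ``$\exists C>0$'' in the statement absorbs.

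Next, given a very ample real divisor $D$, I would use that $\mathcal{V}(D)$ is a very ample family of curves on $X(\R)$ (Example~\ref{ex-tres-ample}): choose $D'\in\mathcal{V}(D)$ passing through a chosen point $P\in X(\R)$ with tangent direction transverse to the \emph{stable} direction $\ell^-$ at $P$. Then $D'(\R)$ contains a short arc $\gamma$ that is uniformly transverse to the stable foliation; writing $\gamma$ in coordinates adapted to $(\ell^+,\ell^-)$, its image $f_\R^n(\gamma)$ has $\ell^+$-extent $\geq c\lambda^n$ for a constant $c>0$ (this is the elementary computation: an affine hyperbolic map applied to a segment not parallel to the contracting line stretches its length like $\lambda^n$, with the constant governed by the angle between $\gamma$ and $\ell^-$). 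Comparing metrics gives $\vol_\R(f_*^nD)\geq\lo(f_\R^n(\gamma))\geq C\lambda^n$, hence $\mvol_\R(f_*^nD)\geq C\lambda^n$.

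The only delicacy — and I expect it to be the main obstacle — is handling the non-closedness of the affine leaves when $\ell^+$ is irrational: the segment $f_\R^n(\gamma)$ winds densely and one must be sure the length is measured along the genuine image curve rather than its closure, and that the comparison-of-metrics constant is uniform in $n$. This is resolved by working entirely upstairs in $\R^2$ (or in the universal cover of the relevant component of $X(\R)$), where $F$ is genuinely linear, estimating the Euclidean length of the lifted arc $F^n(\tilde\gamma)$ there, and only at the end projecting down — length is non-increasing under the covering projection in the flat metric, and the flat-vs-fixed metric ratio is bounded on the compact $X(\C)$, so a single constant $C$ works for all $n$. A symmetric argument applied to $f^{-1}$ (if one wants it for all $n\in\Z$, though here only $n\in\N$ is claimed) completes the proof, and this strengthened bound is precisely what is needed to run the proof of Theorem~\ref{formule-exacte} with the endpoint exponent $\alpha=\tfrac12=\h(f_\R)/\h(f_\C)$, thereby showing the concordance is achieved.
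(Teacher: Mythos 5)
Your argument follows the same route as the paper: replace $f$ by an iterate so that the lift $F$ is in $\SL_2(\Z)$ with eigenvalues $\lambda,\lambda^{-1}$, work in the flat metric upstairs on $\R^2$, lift the arc $\gamma\subset D'(\R)$ to $\tilde\gamma$, and observe that $F^n$ multiplies the $\ell^+$-component of $\tilde\gamma$ exactly by $\lambda^n$; projecting back down and comparing metrics then gives the bound. The only (minor) divergence is how you ensure that $\tilde\gamma$ has nonzero $\ell^+$-extent: you invoke the very ampleness of the family $\mathcal{V}(D)$ (Example~\ref{ex-tres-ample} and Definition~\ref{veryample}) to choose $D'$ through $P$ with tangent direction transverse to $\ell^-$, whereas the paper translates $D$ to contain the origin as a smooth point, takes an arbitrary smooth arc $\gamma$ through $0$, projects $F^n(\tilde\gamma)$ onto $\mathcal{D}^+$ parallel to $\mathcal{D}^-$, and argues $\lo(p(\tilde\gamma))>0$ by analyticity: if the projection were zero, $\tilde\gamma$ would lie in $\mathcal{D}^-$, so the analytic curve $D(\R)$ would contain the image of the irrational line $\mathcal{D}^-$, which is Zariski-dense — a contradiction. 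Both justifications are correct; yours is more constructive and uses the very-ample hypothesis head-on, while the paper's analyticity argument is slightly slicker and would not even need a prescribed tangent direction.
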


\begin{proof}
The automorphism $f$ lifts to a linear self-map $F$ of $\R^2$. Replacing $f$ with~$f^2$ if necessary, $F$ has eigenvalues $\lambda$ and $\lambda^{-1}$. We choose a scalar product on $\R^2$ such that the eigenlines $\mathcal{D}^+$ and $\mathcal{D}^-$, respectively, associated to $\lambda$ and $\lambda^{-1}$, are orthogonal. Then we take on $X(\R)$ the Riemannian metric induced by this scalar product.

If necessary, we change $D$ (by translation) into an equivalent divisor containing the origin as a smooth point. Then the curve $D(\R)$ contains a smooth simple arc $\gamma$ through $0$. Let $\tilde\gamma$ be the lift of $\gamma$ to $\R^2$ containing the origin, and let $p:\R^2\to\R^2$ be the projection on $\mathcal{D}^+$ with direction $\mathcal{D}^-$. Then 
\begin{equation}
\begin{split}
\mvol(f_*^nD)
&\geq
\lo(f^n(\gamma))\\
&=
\lo(F^n(\tilde\gamma))\\
&\geq
\lo(p\circ F^n(\tilde\gamma))\\
&=
\lambda^n\lo(p(\tilde\gamma)).
\end{split}
\end{equation}
Observe that $\lo(p(\tilde\gamma))>0$. Indeed, if it were zero, $\tilde\gamma$ would be contained in~$\mathcal{D}^-$, hence the curve $D(\R)$, being analytic, would contain the projection of $\mathcal{D}^-$ on the torus $\R^2/\Z^2$. But this is not the case, because the last one is Zariski-dense, the line $\mathcal{D}^-$ being irrational. Thus we get the result with $C=\lo(p(\tilde\gamma))$.
\end{proof}


\subsection{Picard number 3}

\begin{lemma}
Let $X$ be a real abelian surface with $\rho(X_\R)=3$. There exists a real elliptic curve $E$ such that $X$ is isogenous to $E\times E$.
\end{lemma}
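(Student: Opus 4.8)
By Lemma \ref{isog}, we may replace $X$ by an isogenous surface of the form $\C^2/\Lambda$ with $\Lambda=\Z^2\oplus iS\Z^2$, $S=\begin{pmatrix} y_1 & y_3 \\ y_3 & y_2\end{pmatrix}$ symmetric positive definite; concordance, the Picard number, and the property of being isogenous to $E\times E$ are all invariant under isogeny (Proposition \ref{inv-iso} and the remark preceding Lemma \ref{isog}). The hypothesis $\rho(X_\R)=3$ means, by formula (\ref{nbpicard-ab}), that $\dim_\Q(\Q y_1+\Q y_2+\Q y_3)=1$, i.e. the entries $y_1,y_2,y_3$ are all rational multiples of a single real number $y$. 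After rescaling the second factor of $\C^2$ by a real constant (a real isogeny up to clearing denominators), I can arrange $S=y\cdot M$ with $M\in\M_2(\Q)$ symmetric positive definite, and after a further integral change of basis of the lattice I can take $M$ to have \emph{integer} entries, so $S=yN$ with $N\in\M_2(\Z)$ symmetric positive definite.

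**Main step.** Now consider the elliptic curve $E=\C/(\Z\oplus iy\Z)$. I claim $\C^2/(\Z^2\oplus iyN\Z^2)$ is isogenous to $E^2=\C^2/(\Z^2\oplus iy\Z^2)$. The two lattices $\Lambda=\Z^2\oplus iyN\Z^2$ and $\Lambda_0=\Z^2\oplus iy\Z^2$ inside $\C^2$ both contain, and are contained in, commensurable lattices: indeed $N\Z^2\subset\Z^2$ since $N$ is an integer matrix, so $\Lambda\subset\Lambda_0$, and $\Lambda_0$ has finite index in $\Lambda$ after multiplying by $\det N$ (or simply: $\det(N)\Z^2\subset N\Z^2$). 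Both inclusions are compatible with the complex structure and with complex conjugation — here it is important that $N$ is a \emph{real} matrix, so the map $z\mapsto z$ on $\C^2$ descends — hence the identity of $\C^2$ induces a real isogeny $\C^2/\Lambda\to\C^2/\Lambda_0=E\times E$. Composing with the isogenies produced above, $X$ is isogenous to $E\times E$, which is the assertion. I should double-check that $E$ is a \emph{real} elliptic curve: its lattice $\Z\oplus iy\Z$ is stable under complex conjugation and is of the standard real form (\ref{reseau}), so $E(\R)\neq\emptyset$ and $E$ carries a real structure.

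**The delicate point.** The one place that needs care is the passage from "$S$ has rational entries up to a common real scalar" to "$S=yN$ with $N$ symmetric positive definite over $\Z$." Rescaling the target torus changes $S\mapsto cS$ for $c>0$ and is a real isogeny only for suitable $c$ (one clears denominators so that $cS$ has integer entries and the scaling map sends $iyN\Z^2$ into the standard lattice up to finite index); similarly, an integral change of basis $P\in\GL_2(\Z)$ acts by $N\mapsto P^{\mathsf T}NP$, which preserves symmetry, integrality, and positive-definiteness but not much else — so one cannot in general diagonalize $N$, and that is fine, since the argument above only uses that $N$ is an integer matrix with $\det N\neq 0$. A clean way to organize this is: start from $S$ with $y_i\in\Q y$, write $y_i=(a_i/b)\,y$ with $a_i,b\in\Z$, $b>0$, and apply the real isogeny given by multiplication by $b$ on the $iS\Z^2$ part together with the identity on $\Z^2$ — more precisely use the dilation $(z_1,z_2)\mapsto(z_1,z_2)$ after rescaling $\tau$ exactly as in the proof of Lemma \ref{isog} — to reduce to $S=yN$, $N=\begin{pmatrix}a_1&a_3\\a_3&a_2\end{pmatrix}\in\M_2(\Z)$, which is automatically positive definite because $S$ was. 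Then invoke the inclusion $\Z^2\supset N\Z^2\supset(\det N)\Z^2$ to get the finite-index chain of lattices and conclude.
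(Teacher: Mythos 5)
Your argument is correct and essentially the same as the paper's: reduce via Lemma \ref{isog} to $\Lambda = \Z^2 \oplus iS\Z^2$, express $S = y'N$ with $N$ integral symmetric positive definite, and use the chain of finite-index inclusions $(\det N)\Z^2 \subset N\Z^2 \subset \Z^2$ to exhibit an isogeny $\C^2/\Lambda \to \C^2/(\Z^2\oplus iy'\Z^2) = E\times E$; the paper packages the same idea as the dilation by an integer $m$ with $my_2, my_3 \in \Z y_1$, landing directly in $\Z^2\oplus iy_1\Z^2$. One remark on your \textquotedblleft delicate point\textquotedblright\ paragraph: the map you describe as \textquotedblleft multiplication by $b$ on the $iS\Z^2$ part together with the identity on $\Z^2$\textquotedblright\ is not $\C$-linear (it sends $e_1\mapsto e_1$ but $ie_1\mapsto b\,ie_1$), so it is not an isogeny, and the \textquotedblleft further integral change of basis\textquotedblright\ accomplishes nothing; neither is needed, because writing $y_j=(a_j/b)y$ already gives $S=(y/b)N$ with $N$ integral by pure relabeling, after which your main step goes through verbatim.
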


\begin{proof}
Changing $X$ by isogeny if necessary, $X$ has the form $\C^2 / \Lambda$, where $\Lambda$ is like in Lemma \ref{isog}. As $\rho(X_\R)=3$, it follows that $\Q y_1+\Q y_2+\Q y_3$ has dimension $1$, so there exists~$m\in\N^*$ such that $my_2$ and $my_3$ are in $\Z y_1$ ($y_1\neq 0$, for~${y_1y_2-y_3^2=\det(S)>0}$). Then the dilation by $m$ in $\C^2$ gives rise to an isogeny from $X$ to $E\times E$, where $E$ is the elliptic curve $\C / (\Z \oplus iy_1 \Z)$.
\end{proof}

So we see that it is enough to restrict ourselves to the case $X=E\times E=\C^2/\Lambda$, where~$E=\C/(\Z\oplus\tau\Z)$ is a real elliptic curve, with $y=\Im(\tau)>0$ and $2\Re(\tau)\in\Z$, and $\Lambda$ is the lattice $\Z^2\oplus\tau\Z^2$. Furthermore, since the concordance is invariant under isogeny, we can even suppose that $\Re(\tau)=1/2$, so that the curve $E(\R)$ has only one connected component, which is identified with $\R/\Z$.

Observe that the group $\GL_2(\Z)$ acts on $X$ and gives many examples of real hyperbolic type automorphisms. A consequence from this fact and Proposition \ref{alpha-leq-1/2} is that we already have the inequality
\begin{equation}\label{leq1/2}
\alpha(X)\leq 1/2.
\end{equation}

In order to compute volumes, we choose the standard Euclidean metric on the torus $X=\C^2/\Lambda$, whose K\"ahler form is given by $\kappa= \frac{i}{2} \left( dz_1 \wedge d\b{z_1} + dz_2 \wedge d\b{z_2} \right)$. For this metric, we have $\vol_\R(E)=1$ and $\vol_\C(E)=y$. In the remaining part of this section, we follow \cite{christol}.

\begin{definition}
A \emph{rational line} on $X$ is the projection of a line of $\C^2$ given by an equation $az_1=bz_2$ with $(a,b)\in\Z^2$. The number $a/b\in\Q\cup\{\infty\}$ is the \emph{slope} of this rational line.
\end{definition}

\begin{example}
The curves $H$, $V$, and $\Delta$, which, respectively are the horizontal, the vertical, and the diagonal of $E\times E$, are rational lines with slopes by $0$, $\infty$, and~$1$, respectively. Their classes form a base of $\NS(X_\R;\Z)$ (one can use \cite[\textsection 1, 3.4]{birkenhake-lange} and~\cite[\textsection IV (3.4)]{silhol} to make this computation).
\end{example}

\begin{lemma}\label{vol-rat}
Let $D$ be a rational line on $X$. Then 
\begin{equation}
\vol_\R(D)=C\vol_\C(D)^{1/2},
\end{equation}
with $C=y^{-1/2}=\vol_\R(E)/\vol_\C(E)^{1/2}$.
\end{lemma}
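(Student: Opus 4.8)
A rational line $D$ on $X = E\times E = \C^2/\Lambda$ with slope $a/b$ (written in lowest terms, with $(a,b)\in\Z^2$) is the image of the line $L: az_1 = bz_2$ in $\C^2$. The strategy is to compute both $\vol_\R(D)$ and $\vol_\C(D)$ directly, using the explicit flat metric $\kappa = \frac{i}{2}(dz_1\wedge d\b{z_1} + dz_2\wedge d\b{z_2})$, and then observe that the ratio is exactly $y^{-1/2}$. The key point that makes this work is that everything is \emph{linear}: $D$ is a subtorus (a translate of one, but translation is an isometry for the flat metric, so we may assume $D$ passes through the origin), so its real and complex volumes are just the covolumes of the induced lattices in the appropriate real vector spaces, scaled by the Euclidean structure.

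\textbf{Step 1: identify the lattice cut out on $L$.} The complex line $L = \{az_1 = bz_2\}\subset\C^2$ is parametrized by $t\mapsto (bt, at)$ for $t\in\C$. Intersecting with $\Lambda = \Z^2\oplus\tau\Z^2$: a point $(bt,at)$ lies in $\Lambda$ iff $bt, at\in\Z\oplus\tau\Z$; since $\gcd(a,b)=1$, this forces $t\in\Z\oplus\tau\Z$. So $D(\C) = \C/(\Z\oplus\tau\Z)$ as a complex curve, of the \emph{same shape} as $E$ itself, but embedded via $t\mapsto(bt,at)$, which scales the flat metric by the factor $\sqrt{|a|^2+|b|^2}$ (Euclidean norm of the direction vector $(b,a)$ in $\C^2$ with the given metric).

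\textbf{Step 2: compute $\vol_\C(D)$.} Since $D(\C)$ is $\C/(\Z\oplus\tau\Z)$ rescaled by $\sqrt{|a|^2+|b|^2}$, its area is $(|a|^2+|b|^2)\cdot\vol_\C(E) = (|a|^2+|b|^2)\,y$. Equivalently one can just integrate $\kappa$ over $D(\C)$, or use the intersection-theoretic formula $\vol_\C(D) = [\kappa]\cdot[D]$; the class $[D]$ in the base $[H],[V],[\Delta]$ has the appropriate coefficients and one checks the arithmetic matches.

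\textbf{Step 3: compute $\vol_\R(D)$.} The real locus $D(\R)$ is the image in $X(\R) = \R^2/\Z^2$ (recall $\Re\tau = 1/2$, so $E(\R)=\R/\Z$) of the real line $at_1 = bt_2$... more precisely, the real points of $D$ come from real $t$, giving the parametrized curve $t\mapsto(bt,at)\bmod\Z^2$ for $t\in\R$, which closes up when $t\in\Z$. Its Euclidean length in $\R^2/\Z^2$ is $\sqrt{a^2+b^2}\cdot\vol_\R(E) = \sqrt{a^2+b^2}$. Hence $\vol_\R(D) = \sqrt{|a|^2+|b|^2}$ since $a,b\in\Z$ are real. (One must also check there are no \emph{other} real points — that the line $at_1=bt_2$ with real slope meets the real torus only in this one connected loop; this holds because the slope $a/b$ is rational, so the projection to $\R^2/\Z^2$ is a closed geodesic, not dense.)

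\textbf{Conclusion and main obstacle.} Combining Steps 2 and 3:
\[
\frac{\vol_\R(D)}{\vol_\C(D)^{1/2}} = \frac{\sqrt{|a|^2+|b|^2}}{\bigl((|a|^2+|b|^2)\,y\bigr)^{1/2}} = y^{-1/2},
\]
which is independent of the slope, and equals $\vol_\R(E)/\vol_\C(E)^{1/2}$ since $\vol_\R(E)=1$, $\vol_\C(E)=y$. The computation is essentially bookkeeping; I expect the only genuinely delicate point to be \emph{Step 1}, namely pinning down correctly the lattice that $\Lambda$ induces on the line $L$ — one needs the coprimality of $(a,b)$ to conclude that the induced lattice on the parameter $t$ is exactly $\Z\oplus\tau\Z$ rather than a sublattice, and similarly to see that $D$ is irreducible (a single elliptic curve rather than several). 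Once the lattice is correctly identified, the metric factors $\sqrt{|a|^2+|b|^2}$ (complex) and $\sqrt{a^2+b^2}$ (real) are the same number, and the $y$ is the only thing that survives in the ratio, giving the clean slope-independent constant $C = y^{-1/2}$.
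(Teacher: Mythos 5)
Your proof is correct and follows essentially the same route as the paper: parametrize the rational line explicitly, compute $\vol_\R(D)=\sqrt{a^2+b^2}$ and $\vol_\C(D)=(a^2+b^2)y$ directly (the paper phrases the complex volume via $\vol_\C(D)=\vol_\C(E)(D\cdot H+D\cdot V)$ with $D\cdot H=a^2$, $D\cdot V=b^2$, but this is the same bookkeeping), and take the ratio. The only small quibble is that your parenthetical justification for ``no other real points'' of $D$ appeals to non-density of the geodesic, which is not quite the right reason (what one actually needs is that a point $[bt,at]$ fixed by $\sigma$ forces $\Im t\in y\Z$, hence $t$ equivalent to a real parameter), but this is a minor gloss and the paper elides the same point.
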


\begin{proof}
Let $a/b$ be the slope of $D$, with coprime integers $a$ and $b$. We compute the length of $D(\R)$ by the Pythagorean theorem:
\begin{equation}
\vol_\R(D)=\sqrt{a^2+b^2}.
\end{equation}
On the other hand, it is clear, from the form of $\kappa$, that
\begin{equation}
\vol_\C(D)=\vol_\C(E)(D\cdot H+D\cdot V).
\end{equation}
We easily check that $D\cdot H=a^2$ and $D\cdot V=b^2$, hence
\begin{equation}
\vol_\C(D)=y(a^2+b^2).
\end{equation}
\end{proof}

The group $\SL_2(\Z)$ acts by automorphisms on $X$, thus by isometries on $\NS(X_\R;\R)$. This action preserves the ample cone $\Amp(X_\R)$, which here is the same as $\Pos(X_\R)$ (see \cite[\textsection 1.5.B]{lazarsfeld}).

If we identify the disk $\mathbf{D}=\P(\Amp(X_\R))$ with the Poincar\'e half-plane $\mathbf{H}$, by the unique isometry matching the class of a rational line in $\partial\mathbf{D}$ with the inverse of its slope in $\partial\mathbf{H}=\R\cup\{\infty\}$, then the induced action of $\PSL_2(\Z)$ on $\mathbf{D}$ corresponds to the standard action by homographies on $\mathbf{H}$. As a consequence we see that the triangle ${\mathbf{T} \subset \mathbf{D}}$, whose vertices are $\P[H]$, $\P[V]$, and $\P[\Delta]$, contains some fundamental domain for the action of $\PSL_2(\Z)$ on $\mathbf{D}$ (see Figure \ref{triangle}).\\

\begin{figure}[h]
\begin{center}
\scalebox{0.6}{\input{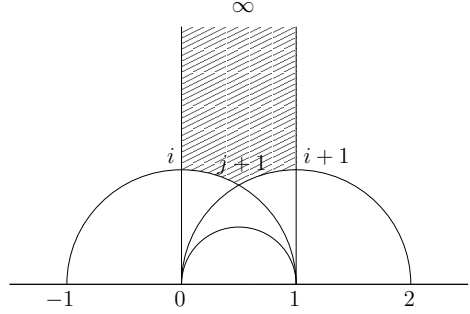}}
\caption{A fundamental domain for the action of $\PSL_2(\Z)$ on~$\mathbf{H}$. This domain is contained in the triangle corresponding to $\mathbf{T}$, whose vertices are $\infty$, $0$ and $1$.}\label{triangle}
\end{center}
\end{figure}

Let $D$ be a real ample divisor on $X$. There exists some $f\in\SL_2(\Z)$ such that~$\P(f_*^{-1} [D])\in\mathbf{T}$. So there are nonnegative numbers $k_1$, $k_2$ and $k_3$ such that~$f_*^{-1} [D] = k_1 [H] + k_2 [V] + k_3 [\Delta]$. The numbers $k_1$, $k_2$, and $k_3$ are actually integers, for $([H],[V],[\Delta])$ is a base of $\NS(X_\R;\Z)$. Hence the divisor $D$ is equivalent to $k_1D_1+k_2D_2+k_3D_3$, where $D_1=f(H)$, $D_2=f(V)$, and $D_3=f(\Delta)$ are rational lines. Then
\begin{align}
\mvol_\R(D) & \geq \sum_j k_j\vol_\R(D_j)\\
& = C \sum_j k_j\vol_\C(D_j)^{1/2}
&&\text{(by Lemma \ref{vol-rat})}\\
& \geq C \left(\sum_j {k_j}^2\vol_\C(D_j)\right)^{1/2}
&&\text{(by Minkowski inequality (\ref{minkowski}))}\\
& \geq C \left(\sum_j {k_j}\vol_\C(D_j)\right)^{1/2}\\
& = C \vol_\C(D)^{1/2}.
\end{align}

We deduce that $1/2\in\mathcal{A}(X)$. Thus the concordance is $1/2$ and it is achieved. This ends the proof of Theorem \ref{thm-surf-ab}.



\section{K3 Surfaces}


\subsection{Preliminaries}

A real K3 surface is here a real \emph{algebraic} surface $X$ such that~${{\rm H}^1(X(\C);\Z)=0}$ and the canonical divisor $K_X$ is trivial. 


\subsubsection{Exceptional curves}

On a K3 surface, a complex irreducible curve $C$ with negative self-intersection must have self-intersection $-2$, by the genus formula. In contrast, when $C$ is a real curve that is irreducible over $\R$ and has negative self-intersection, we can also have $C^2=-4$. Indeed the curve $C$ can have the form~${E+\sigma(E)}$, where $E$ is a complex $(-2)$-curve with $E\cdot\sigma(E)=0$.

By extension we call a real effective divisor \emph{exceptional} if it has self-intersection $-2$ or it has the form $E+\sigma(E)$, where $E$ is a complex curve with $E^2=-2$ and~${E\cdot\sigma(E)=0}$ (this implies $(E+\sigma(E))^2=-4$). We denote by $\Delta\subset\NS(X_\R;\Z)$ the set of classes of exceptional curves. By description of the K\"ahler cone (cf \cite[\textsection VIII (3.9)]{bpvdv}),
\begin{equation}
\Amp(X_\R)=\{\theta\in\Pos(X_\R)\,|\,\theta\cdot d>0\quad\forall d\in\Delta\}.
\end{equation}
This cone coincides with one of the chambers of $\Pos(X_\R)\backslash\bigcup_{d\in\Delta}d^\perp$. As a special case, we see that the lack of exceptional curves is equivalent to the equality~$\Amp(X_\R)=\Pos(X_\R)$.


\subsubsection{Torelli theorem} Let us recall the following result, which describes automorphisms of K3 surfaces (see {\cite[\textsection VIII (11.1) \& (11.4)]{bpvdv}}, {\cite[\textsection VIII (1.7)]{silhol}}).

\begin{theorem}[real Torelli theorem]\label{torelli}
Let $X$ be a real $K3$ surface and let $\phi$ be a Hodge isometry of ${\rm H}^2(X(\C);\Z)$ that preserves the ample cone. Then there exists a unique \emph{complex} automorphism $f$ of $X(\C)$ such that $f_*=\phi$. If moreover~$\phi$ commutes with the involution $\sigma_X^*$, then $f$ is a \emph{real} automorphism.
\end{theorem}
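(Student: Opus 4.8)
The plan is to reduce the real statement to the classical complex Torelli theorem for K3 surfaces, then analyze the compatibility of the resulting automorphism with the real structure. First I would invoke the complex (strong) Torelli theorem: given a Hodge isometry $\phi$ of ${\rm H}^2(X(\C);\Z)$ that preserves the ample cone, there exists a unique complex automorphism $f$ of $X(\C)$ with $f_*=\phi$. Here ``Hodge isometry'' means $\phi$ is an isometry for the intersection form that preserves the Hodge decomposition (equivalently, preserves ${\rm H}^{2,0}(X(\C))$); ``preserves the ample cone'' is exactly the effectivity/positivity condition needed to rule out the sign ambiguity and to pass from an abstract lattice isometry to a genuine automorphism rather than merely an isomorphism of Hodge structures. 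The uniqueness of $f$ is the crucial structural fact that makes the rest of the argument work, since it lets us identify two automorphisms as soon as they induce the same map on cohomology.

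Next, assuming $\phi$ commutes with $\sigma_X^*$, I would show that $f$ is a real automorphism, i.e. $f\circ\sigma_X=\sigma_X\circ f$. Set $g=\sigma_X\circ f\circ\sigma_X^{-1}$; this is again a holomorphic automorphism of $X(\C)$ (the conjugate by an antiholomorphic involution of a holomorphic map is holomorphic), so its action on cohomology is $g_*=\sigma_X^*\circ f_*\circ(\sigma_X^*)^{-1}=\sigma_X^*\circ\phi\circ\sigma_X^*$ (using $(\sigma_X^*)^2=\id$). By the commutation hypothesis this equals $\phi=f_*$. Now $g$ and $f$ are two complex automorphisms inducing the same map on ${\rm H}^2(X(\C);\Z)$ — but one must be slightly careful here, because the uniqueness in the complex Torelli theorem is usually stated with the ample-cone condition built in, and a priori $g$ need only satisfy $g_*=\phi$. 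Since $\phi$ preserves the ample cone by hypothesis, uniqueness applies directly and forces $g=f$, that is, $\sigma_X\circ f=f\circ\sigma_X$, so $f$ descends to a self-map $f_\R$ of $X(\R)$ and is a real automorphism in the sense of this paper.

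The main obstacle is the second step's subtle point: in the real K3 case the Hodge isometry $\phi$, being real, satisfies $\sigma_X^*\,{\rm H}^{2,0}=\overline{{\rm H}^{0,2}}={\rm H}^{0,2}$, and one needs $\phi$ to act on ${\rm H}^{2,0}$ correctly (by a unit-modulus scalar) so that $f$ is a \emph{biholomorphism} and not just a diffeomorphism permuting the Hodge pieces — this is automatic from $\phi$ being a Hodge isometry, but deserves a line. A second delicate point is whether one gets $f$ real ``on the nose'' or only up to composing with $\sigma_X$ on the torus side (contrast with Theorem~\ref{torelli-tori}, where only $f$ or $f^2$ is real): here the uniqueness in the complex Torelli theorem is exact (no sign ambiguity, unlike the torus case where automorphisms are only defined up to $\pm\id$), so the argument yields $g=f$ with no iterate needed. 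I would finish by remarking that, conversely, any real automorphism $f$ gives a Hodge isometry $f_*$ preserving the ample cone and commuting with $\sigma_X^*$, so the correspondence $f\leftrightarrow\phi$ is a bijection onto the group of such isometries; this is not strictly needed for the statement but clarifies why the theorem is the right tool for producing the automorphisms used in the K3 applications (Theorem~\ref{thm-intro-k3}).
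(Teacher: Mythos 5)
The paper does not give a proof of this theorem: it is recalled with citations to \cite{bpvdv} and \cite{silhol}. Your argument — invoke the strong complex Torelli theorem for K3 surfaces to produce a unique $f$ with $f_*=\phi$, then set $g=\sigma_X\circ f\circ\sigma_X$, compute $g_*=\sigma_X^*\circ f_*\circ\sigma_X^*=\phi$ using the commutation hypothesis, and conclude $g=f$ by the uniqueness — is the standard (and correct) derivation of the real statement from the complex one, and it matches exactly the one-line strategy the paper records for the analogous torus statement (the remark after Theorem~\ref{torelli-tori}). Your observation that, unlike the torus case, there is no $\pm$ ambiguity for K3 surfaces (the representation $\Aut(X(\C))\to\Isom({\rm H}^2(X(\C);\Z))$ is injective), so one gets $f$ itself real rather than only $f^2$, is the right way to explain the difference between the two statements. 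The remark about the action on ${\rm H}^{2,0}$ being a unit-modulus scalar is a standard consequence of $\phi$ being a Hodge isometry and is already absorbed into the complex Torelli theorem, so it need not be belabored. In short, the proposal is correct and fills in a proof the paper omits, using the route the paper itself suggests elsewhere.
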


\begin{rema}\label{kernel}
The kernel of the representation $\Aut(X_\R)\to \Isom(\NS(X_\R;\Z))$ is finite, where $\Isom(\NS(X_\R;\Z))$ denotes the group of isometries of $\NS(X_\R;\Z)$. Indeed, the space ${\rm H}^2(X(\C);\R)$ decomposes into the orthogonal direct sum $V_1\oplus V_2 \oplus V_3$, where~$V_1=\NS(X_\R;\R)$, $V_2$ stands for the orthogonal of $V_1$ in ${\rm H}^{1,1}(X(\C);\R)$, and~$V_3=({\rm H}^{0,2}\oplus{\rm H}^{2,0})(X(\C);\R)$. The intersection form is negative definite on $V_2$ and positive definite on $V_3$. If $f\in\Aut(X_\R)$ is in the kernel of the representation, then the induced map $f_*$ on ${\rm H}^2(X(\C);\R)$ preserves the intersection form, so it is contained in the compact set $\{\id_{V_1}\}\oplus\Isom(V_2)\oplus\Isom(V_3)$. Since the matrix of $f_*$ must also have integer coefficients in a base of ${\rm H}^2(X(\C);\Z)$, there are finitely many possibilities for $f_*$, and thus for $f$ by uniqueness in the Torelli theorem.
\end{rema}


\subsection{Picard number $2$}

When $\rho(X_\R)=2$, the nef cone $\Nef(X_\R)$ has exactly two extremal rays. We say this cone is \emph{rational} if both rays are rational, that is, if they contain an element of $\NS(X_\R;\Z)\backslash\{0\}$.

\begin{theorem}\label{k3-rho=2}
Let $X$ be a real K3 surface with $\rho(X_\R)=2$.
\begin{itemize}
\item[(1)]
If the intersection form on $\NS(X_\R;\Z)$ represents $0$, or if there are exceptional curves on $X$, then the group $\Aut(X_\R)$ is finite, the cone $\Nef(X_\R)$ is rational and  $\alpha(X)=1$, the concordance being achieved.
\item[(2)]
Otherwise $X$ admits a real hyperbolic type automorphism $f$. Such an automorphism spans a finite index subgroup of $\Aut(X_\R)$, and $\alpha(X)=\frac{\h(f_\R)}{\h(f_\C)}$.
\end{itemize}
\end{theorem}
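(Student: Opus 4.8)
The plan is to split along the dichotomy in the statement, treating the ``degenerate'' case (1) and the ``hyperbolic'' case (2) separately, exactly as for abelian surfaces. For case (1), the key is to show that the two extremal rays of $\Nef(X_\R)$ are rational and that each can be realized by a divisor with positive real volume; then Proposition \ref{conenef} applies verbatim. If the intersection form represents $0$ on $\NS(X_\R;\Z)$, then by \textsection \ref{section-reseaux} the discriminant of the rank $2$ lattice is a perfect square and both isotropic lines are rational; the boundary rays of $\Pos(X_\R)$ are these isotropic lines, and $\Nef(X_\R)$ is cut out from $\Pos(X_\R)$ by the walls $d^\perp$, $d\in\Delta$. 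If there are exceptional curves, the relevant extremal ray(s) are spanned by classes orthogonal to exceptional classes, hence rational. Either way I would argue the extremal rays are rational; finiteness of $\Aut(X_\R)$ then follows because the image of $\Aut(X_\R)$ in $\Isom(\NS(X_\R;\Z))$ preserves the (now rational-polyhedral, in fact two-rayed with at least one rational ray) nef cone, forcing it to be finite by \textsection \ref{section-reseaux}, and the kernel is finite by Remark \ref{kernel}.

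**Next I would** produce, on each extremal ray, a nef class $[D_j]$ with $\mvol_\R(D_j)>0$. The natural candidates are primitive isotropic or $(-2)$-perpendicular classes; by the projectivity of $X$ and adjunction, a primitive nef class with square $0$ on a K3 is (a multiple of) the class of an elliptic fibration, so $h^0(X,\O_X(D_j))\geq 2$ and Proposition \ref{pinceau} gives $\mvol_\R(D_j)>0$ — provided the fibration and a real fiber exist over $\R$, which is where one must be careful: the real locus could in principle be empty. Here I expect to invoke $X(\R)\neq\emptyset$ together with the fact that a real pencil sweeps out $X(\R)$, so some real member meets $X(\R)$, exactly as in the proof of Proposition \ref{pinceau}. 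With both $[D_j]$ in hand, Proposition \ref{conenef} yields $\alpha(X)=1$, achieved.

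**For case (2),** the hypothesis is that the intersection form does not represent $0$ \emph{and} there are no exceptional curves, so $\Amp(X_\R)=\Pos(X_\R)$. By \textsection \ref{section-reseaux} the discriminant is not a perfect square, so $\Isom(\NS(X_\R;\Z))$ contains a hyperbolic isometry, and some iterate $\phi$ preserves $\Amp(X_\R)$ with $\det\phi=+1$; arguing as in the proof of Theorem \ref{delta} (extending $\phi$ from $\NS(X_\R;\Z)\oplus\NS(X_\R;\Z)^\perp$ to all of ${\rm H}^2(X(\C);\Z)$ via Lemma \ref{lemme-alg-com}, keeping it the identity on the orthogonal complement so that it stays a Hodge isometry commuting with $\sigma_X^*$) and then applying the real Torelli theorem \ref{torelli}, I get a real automorphism $f$ with $f_*$ hyperbolic, hence $\h(f_\C)>0$. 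Since $\rho(X_\R)=2$, Theorem \ref{formule-exacte} immediately gives $\alpha(X)=\h(f_\R)/\h(f_\C)$. That $\langle f\rangle$ has finite index in $\Aut(X_\R)$ follows because $\Aut(X_\R)$ maps to a subgroup of $\Isom(\NS(X_\R;\Z))$ containing $f_*$, and $\SO(\NS(X_\R;\Z))\cong\Z\times\Z/2\Z$ by \textsection \ref{section-reseaux}, so any infinite-order element generates a finite-index subgroup; combined with finiteness of the kernel (Remark \ref{kernel}) this passes down to $\Aut(X_\R)$ itself.

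**The main obstacle** I anticipate is case (1): proving rigorously that the extremal rays of $\Nef(X_\R)$ are rational when the only hypothesis is ``intersection form represents $0$ \emph{or} exceptional curves exist,'' and then that the corresponding boundary divisors genuinely have $\mvol_\R>0$ rather than empty real locus. The subtlety is that an extremal nef ray of a K3 need not be isotropic — it could be spanned by a class $\theta$ with $\theta^2>0$ lying on a wall $d^\perp$ — and such a $\theta$ need not obviously give a pencil. One must check case by case which of the two rays are rational and handle the ``big'' boundary class by a separate positivity argument (e.g. Riemann--Roch on the K3 gives $h^0\geq 2$ once $\theta^2\geq 0$ and $\theta$ is nef and nonzero, since $\chi(\O_X)=2$ and $h^2(\O_X(\theta))=h^0(\O_X(-\theta))=0$), and then reduce its real volume positivity to Proposition \ref{pinceau} — this Riemann--Roch computation, though short, is the technical heart of the argument.
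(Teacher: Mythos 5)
Your case (2) matches the paper almost line for line: same hyperbolic isometry, same extension via Lemma~\ref{lemme-alg-com}, same appeal to the real Torelli theorem, same use of Theorem~\ref{formule-exacte} and Remark~\ref{kernel}. That part is fine.

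The problem is case (1), and you correctly sense where: when the intersection form does \emph{not} represent $0$ but exceptional curves exist, you never actually establish that \emph{both} extremal rays of $\Nef(X_\R)$ are rational. Only one wall $d^\perp$ comes for free from a single exceptional curve; the opposite boundary ray of the ample cone could a priori be an irrational isotropic half-line, which is not a divisor class and for which Proposition~\ref{conenef} is unusable. The paper's way out is a dynamical argument you are missing: take a hyperbolic isometry $\phi'$ of $\NS(X_\R;\Z)$, extend $\phi'\oplus\id$ to an isometry $\phi$ of all of $\NS(X_\C;\Z)$ via Lemma~\ref{lemme-alg-com} (note one \emph{cannot} invoke Torelli here, since $\phi$ does not preserve the ample cone), and then show by Riemann--Roch on the complex surface that $\pm\phi^n(d)$ is again the class of an exceptional curve for every $n\in\Z$. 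These classes accumulate on both isotropic lines, so both extremal rays of $\Nef(X_\R)$ are pinched against walls $d^\perp$ and must themselves be rational; only then do the Riemann--Roch pencil argument and Proposition~\ref{conenef} go through. Relatedly, your appeal to \textsection\ref{section-reseaux} to get finiteness of $\Aut(X_\R)$ fails in this subcase: when the discriminant is not a perfect square, $\Isom(\NS(X_\R;\Z))$ is \emph{infinite}, so \textsection\ref{section-reseaux} alone gives nothing. The correct finiteness argument, once both rays are known rational, is that the stabilizer in $\Isom(\NS(X_\R;\Z))$ of the pair of rays is finite (a hyperbolic isometry cannot fix a ray spanned by a class of nonnegative square that is not isotropic), and the kernel is finite by Remark~\ref{kernel}.
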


\begin{rema}
The intersection form on $\NS(X_\R;\Z)$ represents $0$ if and only if there exists some real elliptic fibration (see \cite[Corollary 3 in \textsection 3]{safarevic-torelli}).
\end{rema}

\begin{proof}
\emph{First case: the intersection form represents $0$.} The group $\Isom(\NS(X_\R;\Z))$ is finite (cf \textsection \ref{section-reseaux}), and so is the kernel of $\Aut(X_\R)\to\Isom(\NS(X_\R;\Z))$, hence $\Aut(X_\R)$ must be finite. The extremal rays of $\Nef(X_\R)$ are either isotropic half-lines, or orthogonal to the class of an exceptional curve: they are rational in both cases.

\emph{Second case: the intersection form does not represent $0$ and there are exceptional curves.} We show that $X$ has many exceptional curves. To be more precise, let~$d\in\Delta$ be the class of such a curve, and let $\phi'$ be a hyperbolic isometry of $\NS(X_\R;\Z)$. Replacing $\phi'$ by a positive iterate if necessary, the isometry $\phi'\oplus\id_{\NS(X_\R;\Z)^\perp}$ extends to an isometry $\phi$ on $\NS(X_\C;\Z)$ (cf Lemma \ref{lemme-alg-com}). Note that we cannot apply the Torelli theorem here, for $\phi$ does not preserve the ample cone (even if we suppose that it preserves $\Pos(X_\R)$). Nevertheless, we show the following lemma.

\begin{lemma}
For all $n\in\Z$, $\pm\phi^n(d)$ is the class of an exceptional curve.
\end{lemma}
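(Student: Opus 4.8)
The plan is to argue by induction on $|n|$, the case $n=0$ being the hypothesis $d\in\Delta$. Since $\phi^{-1}$ arises from the hyperbolic isometry $\phi'^{-1}$ in exactly the same way as $\phi$ does from $\phi'$, it is enough to prove the one–step statement: \emph{if $\delta$ is the class of an exceptional curve, then so is one of $\pm\phi(\delta)$}. Two preliminary remarks make this manageable. First, directly from its construction via Lemma~\ref{lemme-alg-com}, the map $\phi$ preserves the sublattice $\NS(X_\R;\Z)$ and restricts there to a power of $\phi'$; after replacing $\phi'$ by $\phi'^2$ we may moreover assume it preserves $\Pos(X_\R)$. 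In particular $\phi(\delta)\in\NS(X_\R;\Z)$ and $\phi(\delta)^2=\delta^2\in\{-2,-4\}$. Second, $\phi$ commutes with $\sigma_X^*$: on $\NS(X_\R;\R)$ the involution $\sigma_X^*$ acts as $-\id$ (which commutes with everything), while $\NS(X_\R;\Z)^\perp$ inside $\NS(X_\C;\Z)$ is the $(+1)$–eigenspace of $\sigma_X^*$, on which $\phi$ is an iterate of the identity.

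Next I would split according to self-intersection. If $\delta^2=-2$, then $\phi(\delta)$ is again a $(-2)$-class of $\NS(X_\C;\Z)$, so Riemann--Roch on the $K3$ surface $X(\C)$ (where $\chi(\O_X)=2$, $K_X=0$) gives $h^0(\phi(\delta))+h^0(-\phi(\delta))\ge h^0(\phi(\delta))-h^1(\phi(\delta))+h^0(-\phi(\delta))=1$; as $\phi(\delta)\ne 0$, exactly one of $\pm\phi(\delta)$ is effective, and its self-intersection being $-2$ it is represented by an exceptional divisor (no irreducibility is needed in this case by the very definition of ``exceptional''); one also checks that this $\sigma_X^*$-anti-invariant class is represented by a \emph{real} effective divisor, using that its linear system is defined over $\R$ and, the intersection form on $\NS(X_\R;\Z)$ not representing $0$, carries no isotropic class to furnish a nontrivial moving part. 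If $\delta^2=-4$, write $\delta=[E]+[\sigma(E)]=[E]-\sigma_X^*[E]$ with $E$ a complex $(-2)$-curve and $E\cdot\sigma(E)=0$. Since $\phi$ commutes with $\sigma_X^*$ one gets $\phi(\delta)=e-\sigma_X^*e$ with $e:=\phi([E])$ a $(-2)$-class of $\NS(X_\C;\Z)$ satisfying $e\cdot\sigma_X^*e=[E]\cdot\sigma_X^*[E]=-E\cdot\sigma(E)=0$; Riemann--Roch again makes $e$ or $-e$ effective, say $e=[E']$, and then $E'\cdot\sigma(E')=-e\cdot\sigma_X^*e=0$, $E'\ne\sigma(E')$ (otherwise $\phi(\delta)=\pm2[E']$ would have self-intersection $-8$), and $\varepsilon\phi(\delta)=[E']+[\sigma(E')]$.

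The remaining point --- and the main obstacle --- is to see, in the case $\delta^2=-4$, that $E'$ is an \emph{irreducible} $(-2)$-curve, so that $[E']+[\sigma(E')]$ genuinely has the shape required by the definition. I would derive this from the Hodge index theorem applied to the rank-$2$, even lattice $\NS(X_\R;\Z)$, whose intersection form has signature $(1,1)$: a decomposition $E'=A+B$ into nonzero effective divisors (say with disjoint supports, which one may arrange unless $E'$ is already irreducible) yields two $\sigma_X^*$-anti-invariant classes $u=[A]+[\sigma(A)]$ and $v=[B]+[\sigma(B)]$ in $\NS(X_\R;\R)$, with $u+v=\phi(\delta)$ of square $-4$; these are linearly independent (if $u=\alpha w$, $v=\beta w$ for a primitive $w$ then $\alpha,\beta\ge 1$ as $u,v$ have positive Kähler degree, forcing $(\alpha+\beta)^2 w^2=-4$ with $w^2$ even — impossible), hence a basis of the plane $\NS(X_\R;\R)$, and their Gram matrix turns out to have nonnegative determinant, contradicting signature $(1,1)$. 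Granting this, $E'$ is irreducible and the induction closes. I expect the bookkeeping with the complex-conjugation signs and, above all, the uniform verification of this last Hodge-index computation over all possible decompositions of $E'$ to be the delicate part; everything else is formal.
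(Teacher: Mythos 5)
Your core argument — Riemann--Roch applied to the $(-2)$-class $\phi(\delta)$ when $\delta^2=-2$, and to the $(-2)$-class $e=\phi([E])$ when $\delta^2=-4$, then reassembling $\phi(\delta)=e-\sigma_X^*e$ — is exactly the paper's, and you are actually more careful than the paper's text at one point (Riemann--Roch gives $h^0(D)+h^0(-D)\geq 1$, not $\geq 2$ as misstated there; $\geq 1$ suffices). Your explicit verification that $\phi$ commutes with $\sigma_X^*$ and preserves $\NS(X_\R;\Z)$ fills in something the paper leaves implicit, and the reduction to the one-step case by induction is an innocuous reformulation (the paper applies Riemann--Roch directly to $\phi^n(e)$, which is the same thing since $\phi^n$ is still an isometry commuting with $\sigma_X^*$).

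The genuine gap is your attempt to prove irreducibility of $E'$, which is both unnecessary and incomplete. It is unnecessary because the paper's definition declares a \emph{real effective divisor} exceptional if it has self-intersection $-2$ or is of the form $E+\sigma(E)$ with $E$ a complex curve of square $-2$ and $E\cdot\sigma(E)=0$; the paper's own proof of this lemma establishes only effectivity, and the downstream argument (the walls $\phi^n(d)^\perp$ approaching the isotropic lines bound $\Amp(X_\R)$) only needs $\pm\phi^n(d)$ to be classes of effective divisors of negative self-intersection, so that the ample cone lies on the positive side of those walls. It is incomplete because in your Hodge-index computation you cannot in general arrange disjoint supports for $A$ and $B$, and the central claim that the Gram matrix of $u=[A]+[\sigma(A)]$ and $v=[B]+[\sigma(B)]$ has nonnegative determinant is not verified and does not hold in general: for instance if $u^2>0>v^2$ the determinant is already negative and there is no contradiction with the signature $(1,1)$. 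You flag this yourself; deleting the irreducibility excursion gives a complete proof identical in substance to the paper's.
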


\begin{proof}
If $d^2=-2$, then $\phi^n(d)^2=-2$. The Riemann--Roch formula shows that ${h^0(X,\O_X(\phi^n(d)))+h^0(X,\O_X(-\phi^n(d)))\geq 2}$, hence $\phi^n(d)$ or $-\phi^n(d)$ is the class of an effective divisor.

Otherwise, $d=e-\sigma^*e$ (the minus sign comes from (\ref{pb-sign})) with $e^2=-2$ and ${e\cdot\sigma^*e=0}$, where $e$ is the class of a complex effective divisor. From the same argument, it comes that $\phi^n(e)$ or $-\phi^n(e)$ is also the class of a complex effective divisor. Thus $\pm\phi^n(d)=\pm(\phi^n(e)-\sigma^*\phi^n(e))$ is the class of a real effective divisor.
\end{proof}

Observe that when $n$ goes to $\pm\infty$, the lines $\R\phi^n(d)$ converge to the two isotropic lines. Consequently, there are exceptional curves whose classes are arbitrarily close to both isotropic directions. The cone $\Amp(X_\R)$ is one of the chambers of $\Pos(X_\R)\backslash\bigcup_{d\in\Delta}d^\perp$, so both its extremal rays must be orthogonal to classes of exceptional curves, hence they are rational. Since the subgroup of $\Isom(\NS(X_\R;\Z))$ whose elements fix or exchange these extremal rays is finite, the group $\Aut(X_\R)$ is also finite by Remark \ref{kernel}.

In the first two cases, let $\R^+[D]$ be an extremal ray of the cone $\Nef(X_\R)$. By Riemann--Roch, $h^0(X,\O_X(D))\geq 2$, thus $\mvol_\R(D)>0$ by Proposition \ref{pinceau}. The assertion about $\alpha(X)$ follows, using Proposition \ref{conenef}.

\emph{Third case: the intersection form does not represent $0$ and there is no exceptional curve.} Let $\phi'$ be a hyperbolic isometry of $\NS(X_\R;\Z)$ that preserves the cone~$\Amp(X_\R)=\Pos(X_\R)$. By the same argument as in the proof of Theorem~\ref{delta}, some iterate $\phi'^k$ extends to a Hodge isometry $\phi$ of ${\rm H}^2(X(\C);\Z)$ that commutes with~$\sigma^*$. Then by the Torelli theorem, there exists on $X$ a real automorphism $f$ such that $f_*=\phi$. The entropy of $f_\C$ is positive as a multiple of the spectral logradius of $\phi'$. Now the representation $\Aut(X_\R)\to\Isom(\NS(X_\R;\Z))$ has finite kernel, and the subgroup generated by $f_*$ has finite index in $\Isom(\NS(X_\R;\Z))$~(cf~\textsection\ref{section-reseaux}). It follows that $f$ spans a finite index subgroup of $\Aut(X_\R)$. The concordance formula is a consequence of Theorem \ref{formule-exacte}.
\end{proof}

\begin{example}[\cite{wehler}]\label{wehler}
Let $Y$ be the $3$-dimensional flag variety
\begin{equation}
Y=\{(P,L)\in\P^2(\C)\times{\P^2(\C)}^*\,|\,P\in L\}.
\end{equation}
Let $X$ be a smooth hypersurface of $Y$ such that the projections $\pi_1:X(\C)\to\P^2(\C)$ and $\pi_2:X(\C)\to{\P^2(\C)}^*$ are ramified $2$-coverings. Then $X$ is a K3 surface, called~a Wehler surface. Furthermore, generic Wehler surfaces have a rank $2$ N\'eron--Severi group, spanned by generic fibers of the two coverings. The automorphism group is then isomorphic to a free product $\Z/2\Z * \Z/2\Z$, the generators being the involutions~$s_1$ and $s_2$ of the coverings $\pi_1$ and $\pi_2$, respectively. Furthermore, the automorphism~$f=s_1\circ s_2$ has hyperbolic type.

If moreover the surface $X$ is defined over $\R$, then ${\NS(X_\R;\Z)=\NS(X_\C;\Z)}$, and the automorphism $f$ is real. So, by Theorem \ref{formule-exacte},
\begin{equation}
\alpha(X)=\frac{\h(f_\R)}{\h(f_\C)}=\frac{\h(f_\R)}{\log(7+4\sqrt{3})}.
\end{equation}
\end{example}


\subsection{Deformation of K3 surfaces in $\P^1\times\P^1\times\P^1$}\label{deformation}

The following example was first described by McMullen \cite{mcmullen-k3-siegel}. Fix a nonzero real number $t$. Let $X^t$ be the hypersurface of $\P^1(\C)^3$ defined in its affine part $\C^3$ by
\begin{equation}
(z_1^2+1)(z_2^2+1)(z_3^2+1)+tz_1z_2z_3=2.
\end{equation}
It is a smooth surface of tridegree $(2,2,2)$, hence a K3 surface \cite{mazur}, here defined over~$\R$. We have three double (ramified) coverings ${\pi_j^t:X^t\to\P^1\times\P^1}$ (with ${j\in\{1,2,3\}}$) that consist in forgetting the $j$-th coordinate. These three coverings give rise to three involutions $s_j^t$ on $X^t$ that span a subgroup of $\Aut(X_\R)$, which is a free product $\Z/2\Z*\Z/2\Z*\Z/2\Z$ \cite{wang}. Let $f^t$ be the automorphism of $X^t$ obtained by composing these three involutions. Its entropy can be computed by the action on the subgroup of $\NS(X_\R;\Z)$ spanned by the fibers of the three coverings. We obtain $\h(f^t_\C)=\log(9+4\sqrt{5})$ (see, for instance, \cite{cantat-panorama}).

For parameter $t=0$, the complex surface $X^0(\C)$ is not smooth, for there are $12$ singular points $(\infty,\pm i,\pm i)$, $(\pm i,\infty,\pm i)$, and $(\pm i,\pm i,\infty)$. However, these points are not real, so the surface $X^t(\R)$ remains smooth at $t=0$. Restricted to~$X^0(\R)$, the birational map $f^0$  is an order $2$ diffeomorphism given by the formula $f^0(x_1,x_2,x_3)=(-x_1,-x_2,-x_3)$. Consequently  $\h(f^0_\R)=0$.

Let $\mathcal{X}$ be the submanifold of $\P^1(\R)^3\times\R$ defined by $\mathcal{X}=\{(x,t)\,|\,x\in X^t(\R)\}$. The projection $p:\mathcal{X}\to\R$ is a locally trivial bundle whose fibers are the real surfaces~$X^t(\R)$. Thus there is an open neighborhood $I_\epsilon=(-\epsilon,\epsilon)$ around $0$, and an injective local diffeomorphism $\psi:X^0(\R)\times I_\epsilon\to \mathcal{X}$ such that $p\circ\psi$ is the natural projection on the second coordinate. For all $t\in I_\epsilon$ the map $\psi$ induces a diffeomorphism from $X^0(\R)$ to $X^t(\R)$, which enables us to conjugate ${f^t_\R:X^t(\R)\to X^t(\R)}$ to a diffeomorphism $g^t$ on $X^0(\R)$. This family of diffeomorphisms on $X^0(\R)$ is a continuous family for the $C^\infty$-topology. As the map $g^0=f^0_\R$ has entropy $0$, it follows that ${\lim_{t\to 0}\h(g^t)=0}$, by continuity of the topological entropy on~${\rm Diff}^\infty(X^0(\R))$ (see \cite{yomdin} or \cite{newhouse-continuity} for the upper semicontinuity, and \cite[Corollary S.5.13]{katok-hasselblatt} for the lower semicontinuity). Since the entropy does not change by conjugacy, we also have ${\lim_{t \to 0}\h(f^t_\R)=0}$. On the other hand we obtain, by Theorem~\ref{entropie}, that~${\alpha(X^t)\leq \h(f^t_\R)/\h(f^t_\C)=\h(f^t_\R)/\log(9+4\sqrt{5})}$ for $t\neq 0$, and so we get
\begin{equation}
\lim_{t\to 0,\,t\neq 0}\alpha(X^t)=0.
\end{equation}

To sum up, we have found a family $(X^t,f^t)_{t\in I_\epsilon\backslash\{0\}}$ of real K3 surfaces $X^t$ embedded in $(\P^1)^3$, equipped with a real hyperbolic type automorphism $f^t$, such that
\begin{itemize}
\item[(1)]
$\h(f^t_\C)$ is a positive constant;
\item[(2)]
as $t$ goes to $0$, $X^t(\R)$ degenerates in a smooth surface, and $f^t_\R$  in a $0$-entropy diffeomorphism;
\item[(3)]
$\lim_{t\to 0}\alpha(X^t)=0$.
\end{itemize}

So we have just proved the following theorem.

\begin{theorem}\label{alpha-petit}
For any $\eta>0$ there exists a real K3 surface in $(\P^1)^3$ such that $\alpha(X)<\eta$.
\end{theorem}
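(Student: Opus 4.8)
The plan is to produce these surfaces from an explicit degenerating family rather than by an abstract existence argument. I would start from McMullen's tridegree $(2,2,2)$ pencil: for $t\neq 0$ let $X^t\subset(\P^1)^3$ be the closure of the affine surface $(z_1^2+1)(z_2^2+1)(z_3^2+1)+tz_1z_2z_3=2$. For $t$ small and nonzero this is smooth, hence a K3 surface, and the three ``forget the $j$-th coordinate'' double covers $\pi_j^t\colon X^t\to\P^1\times\P^1$ give involutions $s_1^t,s_2^t,s_3^t$ whose composition $f^t=s_1^t\circ s_2^t\circ s_3^t$ is a real automorphism of hyperbolic type. Since the three fiber classes already span a rank-$3$ sublattice of $\NS(X^t_\R;\Z)$ on which the $s_j^t$ act in a way independent of $t$, the complex entropy is the constant $\h(f^t_\C)=\log(9+4\sqrt{5})$.

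The crucial feature is the limit $t\to 0$. At $t=0$ the complex surface $X^0$ acquires twelve ordinary nodes, at the points $(\infty,\pm i,\pm i)$, $(\pm i,\infty,\pm i)$ and $(\pm i,\pm i,\infty)$, none of which is real; hence the real locus $X^0(\R)$ is still a smooth compact surface, and on it $f^0$ restricts to the linear involution $(x_1,x_2,x_3)\mapsto(-x_1,-x_2,-x_3)$, which has zero topological entropy. I would then set $\mathcal{X}=\{(x,t):x\in X^t(\R)\}\subset(\P^1)^3\times\R$ and argue that the projection $\mathcal{X}\to\R$ is, near $t=0$, a locally trivial $C^\infty$ fiber bundle; trivializing it over a small interval $I_\epsilon$ conjugates each $f^t_\R$ to a diffeomorphism $g^t$ of the \emph{fixed} manifold $X^0(\R)$, with $g^0=f^0_\R$, and the assignment $t\mapsto g^t$ is continuous in the $C^\infty$ topology.

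From here the conclusion is immediate: by upper semicontinuity of topological entropy on $\Diff^\infty$ of a surface (Yomdin, Newhouse) together with its lower semicontinuity (Katok), $\h(f^t_\R)=\h(g^t)\to\h(g^0)=0$ as $t\to 0$. Combining this with the upper bound of Theorem \ref{entropie} gives $\alpha(X^t)\le\h(f^t_\R)/\h(f^t_\C)=\h(f^t_\R)/\log(9+4\sqrt{5})\to 0$, so for any prescribed $\eta>0$ one fixes a small nonzero $t$ with $\alpha(X^t)<\eta$.

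The main obstacle, and the only step requiring genuine care, is the degeneration analysis at $t=0$: one must check that the family $\mathcal{X}\to\R$ really is a smooth locally trivial bundle across $t=0$ (equivalently, that $X^t(\R)$ stays smooth and varies differentiably even though $X^t(\C)$ does not), so that the conjugating diffeomorphisms exist and $t\mapsto g^t$ is $C^\infty$-continuous; this rests on the nodes of $X^0$ being non-real and on the implicit function theorem applied to the defining equation along $X^0(\R)$. Everything else is a direct appeal to the cited semicontinuity results for entropy and to Theorem \ref{entropie}.
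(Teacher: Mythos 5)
Your argument is essentially identical to the paper's proof: the same McMullen pencil of tridegree $(2,2,2)$ surfaces, the same automorphism $f^t=s_1^t\circ s_2^t\circ s_3^t$ with constant $\h(f^t_\C)=\log(9+4\sqrt5)$, the same observation that the twelve nodes of $X^0$ are non-real so that $X^0(\R)$ is smooth and $f^0_\R$ is an order-two linear involution, the same locally trivial bundle $\mathcal{X}\to\R$ trivialized near $t=0$ to conjugate $f^t_\R$ to a $C^\infty$-continuous family on $X^0(\R)$, and the same appeal to continuity of entropy (Yomdin/Newhouse for upper, Katok for lower semicontinuity) combined with Theorem \ref{entropie}. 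You also correctly flag the one step requiring real care, the smooth local triviality across $t=0$, which the paper treats in the same way.
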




\section{Nondensity of Automorphisms in $\Diff(X(\R))$}\label{section-non-dense}

Let $X$ be a real algebraic surface. For any $r\in\N\cup\{\infty\}$ we denote by~$\Diff^r(X(\R))$ the group of $C^r$-diffeomorphisms of the surface $X(\R)$, together with its $C^r$-topology (when $r=0$, $\Diff^0(X(\R))=\Homeo(X(\R))$ stands for homeomorphisms). The group $\Aut(X_\R)$ identifies with a subgroup of $\Diff^r(X(\R))$. We would like to know how this subgroup sits into the whole group of diffeomorphisms.

When $\Aut(X_\R)$ does not have any positive entropy element, it obviously cannot be dense in $\Diff^\infty(X(\R))$. Indeed, there always exist positive entropy diffeomorphisms on $X(\R)$, and these diffeomorphisms cannot be approached by any automorphism, by the continuity of the entropy. The following result reverses this idea.

\begin{proposition}\label{non-dense}
Let $X$ be a real algebraic surface such that $\alpha(X)>0$. Then the image of the group $\Aut(X_\R)$ in $\Diff^\infty(X(\R))$ is not dense.
\end{proposition}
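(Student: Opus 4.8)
The plan is to exploit the inequality $\alpha(X) \leq \h(f_\R)/\h(f_\C)$ from Theorem~\ref{entropie} (more precisely its contrapositive form) together with the upper semicontinuity of topological entropy on $\Diff^\infty(X(\R))$. The key observation is that if $\alpha(X) > 0$, then \emph{every} real automorphism $f$ of $X$ satisfies $\h(f_\R) \geq \alpha(X)\,\h(f_\C)$, so the real entropy of an automorphism cannot be made arbitrarily small without the complex entropy also being small. In particular, an automorphism with $\h(f_\C) > 0$ always has $\h(f_\R) \geq \alpha(X)\,\log(\lambda_{10}) > 0$ by the Lehmer lower bound (Corollary~\ref{lehmer}), a fixed positive constant. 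So the values $\h(f_\R)$, as $f$ ranges over $\Aut(X_\R)$, form a set that is \emph{bounded away from $0$ except for the value $0$ itself}: they lie in $\{0\} \cup [\alpha(X)\log\lambda_{10}, +\infty)$.

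\textbf{First} I would produce a diffeomorphism $\varphi \in \Diff^\infty(X(\R))$ with $0 < \h(\varphi) < \alpha(X)\log\lambda_{10}$. Such a $\varphi$ exists: one can take any surface diffeomorphism supported in a small disk of a single connected component of $X(\R)$ realizing, say, a small horseshoe — by scaling the construction one gets positive entropy as small as desired. (Alternatively, one may invoke the realization of small positive entropies, e.g.\ via slow Smale horseshoes or by time-rescaling a suspension flow.) \textbf{Then} I would argue by contradiction: suppose $\Aut(X_\R)$ were dense in $\Diff^\infty(X(\R))$. Then there is a sequence of automorphisms $f_n$ with $(f_n)_\R \to \varphi$ in the $C^\infty$-topology. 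By upper semicontinuity of topological entropy on $\Diff^\infty$ of a compact manifold (Yomdin~\cite{yomdin}, or Newhouse~\cite{newhouse-continuity}), we get $\limsup_n \h((f_n)_\R) \leq \h(\varphi) < \alpha(X)\log\lambda_{10}$. Hence for large $n$, $\h((f_n)_\R) < \alpha(X)\log\lambda_{10}$, which by the entropy gap above forces $\h((f_n)_\R) = 0$, and therefore $\h((f_n)_\C) = 0$ as well (since $\h((f_n)_\C) \leq \alpha(X)^{-1}\h((f_n)_\R) = 0$ by Theorem~\ref{entropie}, noting $\alpha(X) > 0$). But a sequence of \emph{zero-entropy} automorphisms cannot $C^\infty$-converge to a \emph{positive-entropy} diffeomorphism, again by upper semicontinuity — more simply, $0 = \limsup_n \h((f_n)_\R)$ would have to be $\geq \h(\varphi) > 0$ if entropy were lower semicontinuous along the sequence; to avoid relying on lower semicontinuity, one instead just notes directly that $\h((f_n)_\R) = 0 < \h(\varphi)$ contradicts nothing by itself, so the cleanest route is: pick $\varphi$ with $0 < \h(\varphi)$, get $\limsup_n \h((f_n)_\R) \geq \h(\varphi) > 0$ is \emph{false} in general — so let me instead phrase the contradiction purely through the gap.

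\textbf{The clean contradiction.} Fix $\varepsilon$ with $0 < \varepsilon < \alpha(X)\log\lambda_{10}$ and choose $\varphi \in \Diff^\infty(X(\R))$ with $\h(\varphi) = \varepsilon$ (or simply $0 < \h(\varphi) < \alpha(X)\log\lambda_{10}$). If $\Aut(X_\R)$ were dense, take $f_n$ with $(f_n)_\R \to \varphi$. By upper semicontinuity, $\h(\varphi) \leq \liminf_n \h((f_n)_\R)$ \emph{fails} — rather, what upper semicontinuity gives is $\h(\varphi) \geq \limsup_n \h((f_n)_\R)$ is also not it. The correct statement: topological entropy is upper semicontinuous at $\varphi$ means for any neighborhood, $\h$ on it is $\leq \h(\varphi) + \delta$; it does \emph{not} bound $\h(\varphi)$ from above by nearby values. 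So the obstruction is real: upper semicontinuity alone lets $\h((f_n)_\R)$ be anything $\leq \h(\varphi)+\delta$, including $0$. \textbf{Therefore} the argument must instead approximate a diffeomorphism of \emph{zero} entropy by automorphisms of necessarily positive entropy, using \emph{lower} semicontinuity (Katok, \cite[Corollary S.5.13]{katok-hasselblatt}): choose $\varphi \in \Diff^\infty(X(\R))$ with $\h(\varphi) = 0$ but which is $C^\infty$-approximable, if at all, only by maps of small entropy — e.g.\ take $\varphi = \id$, or better a $\varphi$ near which lower semicontinuity forces nearby entropies to be small. Hmm — the genuinely clean statement is this: entropy is \emph{continuous} on $\Diff^\infty$ of a surface (upper semicontinuity by Yomdin, lower semicontinuity by Katok). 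So if $(f_n)_\R \to \varphi$ then $\h((f_n)_\R) \to \h(\varphi)$. Now pick $\varphi$ with $0 < \h(\varphi) < \alpha(X)\log\lambda_{10}$ (possible by realizing a small horseshoe). Then $\h((f_n)_\R) \to \h(\varphi) \in (0, \alpha(X)\log\lambda_{10})$, so for large $n$ we have $0 < \h((f_n)_\R) < \alpha(X)\log\lambda_{10}$, which since $\h((f_n)_\R) > 0$ and each $f_n$ is a real automorphism contradicts Corollary~\ref{lehmer}, namely $\h((f_n)_\R) \geq \lambda_{10}\,\alpha(X)$ — wait, Corollary~\ref{lehmer} states $\h(f_\R) \geq \lambda_{10}\,\alpha(X)$, not $\log\lambda_{10}$; in any case it gives a \emph{fixed positive lower bound} $\h((f_n)_\R) \geq c(X) > 0$ whenever $\h((f_n)_\R) > 0$. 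So the contradiction is: $\h((f_n)_\R) \to \h(\varphi)$ with $0 < \h(\varphi) < c(X)$ forces, for large $n$, $\h((f_n)_\R) \in (0, c(X))$, impossible. This completes the proof. \textbf{The main obstacle} is not any single step but correctly invoking the two-sided (Yomdin + Katok) continuity of entropy on $\Diff^\infty$ for surfaces, and making sure the target $\varphi$ has entropy strictly between $0$ and the Lehmer gap $c(X) = \lambda_{10}\,\alpha(X)$ — the existence of such $\varphi$ follows from standard constructions of surface diffeomorphisms with prescribed small positive entropy (small horseshoes, time-changed suspensions).
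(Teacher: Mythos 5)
Your final, cleaned-up argument is exactly the paper's: pick a diffeomorphism $\varphi$ with $0<\h(\varphi)<\alpha(X)\log\lambda_{10}$, invoke the two-sided continuity of topological entropy on $\Diff^\infty$ of a compact surface (Yomdin for upper, Katok for lower semicontinuity), and conclude that any real automorphism $C^\infty$-close to $\varphi$ would have $\h(f_\R)$ in the forbidden interval $(0,\alpha(X)\log\lambda_{10})$, contradicting Corollary~\ref{lehmer} --- the paper just phrases this with a neighborhood of $g$ rather than a converging sequence, which avoids the semicontinuity back-and-forth you went through before landing on the right formulation. You were also right to flag that Corollary~\ref{lehmer} as printed reads $\lambda_{10}\,\alpha(X)$ where $\alpha(X)\log\lambda_{10}$ is intended (the paper's own use of it in this proof confirms the $\log$); in any case the argument only needs that automorphisms with $\h(f_\R)>0$ have $\h(f_\R)$ bounded below by a fixed positive constant.
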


\begin{proof}
Let $g$ be a diffeomorphism of $X(\R)$ such that $0<\h(g)<\alpha(X)\log(\lambda_{10})$. Since the entropy varies continuously on $\Diff^\infty(X(\R))$, there exists a neighborhood~$U$ of $g$ in $\Diff^\infty(X(\R))$ such that the entropy of any diffeomorphism in $U$ remains in the open interval $(0,\alpha(X)\log(\lambda_{10}))$. By Corollary \ref{lehmer} this neighborhood cannot contain any automorphism of $X$.
\end{proof}

In \cite{kollar-mangolte}, Koll\'ar and Mangolte established the nondensity of the image of $\Aut(X_\R)$ in $\Homeo(X(\R))$ as soon as $X(\R)$ has the topology of a connected orientable surface with genus $\geq 2$. In contrast, they proved, for surfaces birational to $\P^2_\R$, the density in $\Diff^\infty(X(\R))$ of the group of \emph{birational} transformations with imaginary indeterminacy points.

When the Kodaira dimension is $0$, $X(\R)$ is naturally equipped with a canonical volume form $\mu_X$, which comes from an everywhere nonzero holomorphic $2$-form on some finite cover of $X$. Since each automorphism preserves $\mu_X$, the nondensity is obvious, as pointed out in Koll\'ar and Mangolte \cite{kollar-mangolte}.  Nevertheless, we can prove, using exactly the same argument, the nondensity in diffeomorphisms that preserve the volume.

\begin{propbis}
Let $X$ be a real algebraic surface of Kodaira dimension $0$ such that $\alpha(X)>0$. Then the image of the group $\Aut(X_\R)$ in $\Diff^\infty_{\mu_X}(X(\R))$ is not dense, where $\Diff_{\mu_X}^\infty(X(\R))$ denotes the subgroup of $\Diff^\infty(X(\R))$ whose elements preserve the canonical volume form $\mu_{X}$.
\end{propbis}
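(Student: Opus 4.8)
The plan is to rerun, essentially verbatim, the proof of Proposition \ref{non-dense}, the only new point being that the ``decoy'' diffeomorphism of small positive entropy must be produced inside $\Diff^\infty_{\mu_X}(X(\R))$ rather than in all of $\Diff^\infty(X(\R))$. First I would observe that $\Diff^\infty_{\mu_X}(X(\R))$ carries the subspace topology from $\Diff^\infty(X(\R))$, on which the topological entropy varies continuously (Yomdin and Newhouse for the upper semicontinuity, Katok for the lower one, exactly as used in \textsection\ref{deformation}); hence entropy is continuous on $\Diff^\infty_{\mu_X}(X(\R))$ too. Granting this, it suffices to exhibit some $g\in\Diff^\infty_{\mu_X}(X(\R))$ with $0<\h(g)<\alpha(X)\log(\lambda_{10})$: then there is a neighbourhood $U$ of $g$ in $\Diff^\infty_{\mu_X}(X(\R))$ on which the entropy stays in $(0,\alpha(X)\log(\lambda_{10}))$, and by Corollary \ref{lehmer} no element of $\Aut(X_\R)$ can lie in $U$, so the image of $\Aut(X_\R)$ is not dense.

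To produce $g$ I would work in a Moser chart. Since $\mu_X$ is a nowhere-vanishing $C^\infty$ $2$-form and $X(\R)$ is a compact surface, a small ball $B\subset X(\R)$ admits a diffeomorphism onto an open subset of $\R^2$ carrying $\mu_X|_B$ to the Lebesgue measure (Moser's theorem); so it is enough to build a compactly supported area-preserving $C^\infty$ diffeomorphism of that disk and extend it by the identity. A classical localized area-preserving Smale horseshoe $h$ does the job as a starting point: its restriction to the invariant Cantor set is conjugate to the full $2$-shift, so $\h(h)\ge\log 2>0$, and $\h(h)<+\infty$ because $h$ is $C^\infty$ on a compact surface. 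Using that the group of compactly supported area-preserving $C^\infty$ diffeomorphisms of a disk is connected (a classical fact, going back to Moser- and Smale-type arguments in dimension $2$), I would choose a $C^\infty$ path $(h_s)_{s\in[0,1]}$ of such diffeomorphisms with $h_0=h$ and $h_1=\id$. Extending every $h_s$ by the identity yields a path $(g_s)_{s\in[0,1]}$ in $\Diff^\infty_{\mu_X}(X(\R))$ with $\h(g_0)\ge\log 2$ and $\h(g_1)=0$.

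Finally, $s\mapsto\h(g_s)$ is a continuous real-valued function on $[0,1]$, so by the intermediate value theorem it attains every value of $(0,\log 2)$; since $\alpha(X)>0$ by hypothesis, some $g_{s^\ast}$ satisfies $0<\h(g_{s^\ast})<\alpha(X)\log(\lambda_{10})$, and I set $g=g_{s^\ast}$. Feeding this $g$ into the neighbourhood argument of the first paragraph closes the proof. The step I expect to be the only genuine obstacle is precisely the construction of $g$: one needs a positive-entropy volume-preserving $C^\infty$ diffeomorphism of $X(\R)$ that can be joined to the identity inside $\Diff^\infty_{\mu_X}(X(\R))$, so that the intermediate value theorem can extract one of arbitrarily small positive entropy. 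Everything else is a transcription of the proof of Proposition \ref{non-dense}, and the ingredients invoked here --- Moser's theorem, the area-preserving horseshoe, connectedness of the compactly supported area-preserving diffeomorphism group of a disk --- are classical and not specific to the present situation.
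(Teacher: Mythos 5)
Your proof is correct and uses essentially the same approach as the paper, which dispatches this proposition with ``exactly the same argument'' as Proposition \ref{non-dense}: continuity of entropy on the volume-preserving subgroup, Corollary \ref{lehmer}, and the existence of a volume-preserving diffeomorphism of small positive entropy. The explicit construction you give --- Moser chart, compactly supported area-preserving $C^\infty$ horseshoe, connectedness of the compactly supported area-preserving diffeomorphism group of a disk, intermediate value theorem --- is the natural way to supply the existence the paper leaves implicit.
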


Actually we can be more precise when the automorphism group $\Aut(X_{\C})$ is a discrete group (for the uniform convergence topology), that is, when the connected component $\Aut(X_{\C})_{0}$ of the identity is reduced to a single point. For instance, this is the case for K3 and Enriques surfaces, but not for tori (for which $\Aut(X_{\C})_{0}$ consists in all translations).

\begin{theorem}
Let $X$ be a real algebraic surface. Assume that $\alpha(X)>0$ and ${\Aut(X_{\C})_0=\{\id_X\}}$. Then the image of the group $\Aut(X_{\R})$ in $\Diff^1(X(\R))$ is a discrete subgroup.
\end{theorem}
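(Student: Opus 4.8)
The plan is to show that the identity $\id_X$ is isolated in the image $G := \Aut(X_\R) \subset \Diff^1(X(\R))$; since $G$ is a group, this forces $G$ to be discrete. Suppose, for contradiction, that there is a sequence of real automorphisms $f_n \neq \id_X$ with $(f_n)_\R \to \id_X$ in the $C^1$-topology on $X(\R)$. The first step is to transfer this convergence to the action on cohomology. Because $(f_n)_\R$ is $C^1$-close to the identity, its topological entropy $\h((f_n)_\R)$ is small for $n$ large (continuity of entropy, as used already in Proposition~\ref{non-dense}); hence by Corollary~\ref{lehmer} we get $\h((f_n)_\R) \geq \lambda_{10}\,\alpha(X)$ would be violated unless $\h((f_n)_\R) = 0$, and then by Theorem~\ref{entropie} (or more directly, since $\alpha(X)>0$ rules out a uniform entropy gap being avoided) we conclude $\h((f_n)_\C) = 0$ for $n$ large. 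So eventually each $f_n$ has zero entropy, i.e. $\lambda(f_n) = 1$, and the spectral radius of $(f_n)^*$ on ${\rm H}^2(X(\C);\R)$ is $1$.

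The second step is to promote ``spectral radius $1$'' to ``$(f_n)^*$ lies in a fixed finite set.'' Here I would use the structure of the cohomology exactly as in Remark~\ref{kernel}: decompose ${\rm H}^2(X(\C);\R) = V_1 \oplus V_2 \oplus V_3$ with $V_1 = \NS(X_\R;\R)$ carrying a form of signature $(1,\rho(X_\R)-1)$, $V_2$ negative definite, $V_3$ positive definite, all three preserved by $(f_n)^*$. On $V_2 \oplus V_3$ the map is automatically in a compact group, and being an integral matrix it ranges over a finite set. On $V_1$, a Hodge isometry preserving the ample cone with spectral radius $1$ must be a finite-order element of the (arithmetic) isometry group of the lattice $\NS(X_\R;\Z)$; since $V_1$ is fixed by the real structure and preserves $\Amp(X_\R)$, the relevant subgroup is finite — a parabolic isometry of infinite order would have a unique invariant isotropic ray and could not preserve the open cone $\Amp(X_\R)$ over a rank-$\ge 2$ lattice, while the anisotropic case gives a finite orthogonal group, and in rank $1$ it is trivial. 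Thus there are only finitely many possibilities for $(f_n)^*$ on all of ${\rm H}^2(X(\C);\Z)$, so after passing to a subsequence $(f_n)^* = (f_m)^*$ for all large $n,m$; equivalently $(f_n f_m^{-1})^* = \id$.

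The third step invokes the hypothesis $\Aut(X_\C)_0 = \{\id_X\}$. The kernel of the representation $\Aut(X_\C) \to \Isom({\rm H}^2(X(\C);\Z))$ is a closed subgroup of $\Aut(X_\C)$; by a theorem of Lieberman--Fujiki (or the Torelli-type arguments for the surfaces at hand) it is an extension of a finite group by $\Aut(X_\C)_0$, so under our hypothesis it is \emph{finite}. Therefore the set $\{g \in \Aut(X_\R) : g^* = \id\}$ is finite, so $f_n f_m^{-1}$ ranges over a finite set; combined with $(f_n)_\R \to \id$ this forces $f_n = \id_X$ for $n$ large (two automorphisms agreeing on the totally real locus $X(\R)$, which is Zariski-dense in $X(\C)$, are equal), contradicting $f_n \neq \id_X$. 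Hence $\id_X$ is isolated in $G$, and $G$ is discrete.

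The main obstacle I anticipate is the bookkeeping in the second step: making precise why an integral Hodge isometry with spectral radius $1$ preserving the ample cone lies in a \emph{finite} set, uniformly over the surface, rather than merely having finite order individually. The clean way is the $V_1\oplus V_2\oplus V_3$ decomposition of Remark~\ref{kernel} together with the observation that the full isometry group of $\NS(X_\R;\Z)$ acting on the ample cone has only finitely many elements of spectral radius $1$ — this is where one uses that parabolic elements cannot preserve the open ample cone and that the discrete group of isometries fixing (or swapping) the two boundary rays of a rank-$2$ hyperbolic cone is finite, with the higher-rank and rank-$1$ cases being easier. A secondary point to handle carefully is the passage from $C^1$-proximity of $(f_n)_\R$ to vanishing entropy: one should note that $\liap$ is also controlled (indeed $\|{\rm D}(f_n)_\R\|_\infty \to 1$), so Yomdin's inequality and the finiteness above combine without loss.
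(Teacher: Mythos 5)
Your proposal takes a genuinely different route from the paper, but it has a gap in the second step. You argue that an integral Hodge isometry with spectral radius $1$ preserving $\Amp(X_\R)$ must lie in a finite set, on the grounds that ``a parabolic isometry of infinite order would have a unique invariant isotropic ray and could not preserve the open cone $\Amp(X_\R)$.'' This is false: a parabolic element of $\Isom(\NS(X_\R;\Z))$ does preserve the positive cone $\Pos(X_\R)$ (it fixes a boundary isotropic ray and carries the interior to itself), and whenever $\Amp(X_\R)=\Pos(X_\R)$ (no exceptional curves) it preserves the ample cone as well. Such \emph{parabolic} automorphisms genuinely occur --- for instance translations along a real elliptic fibration --- and they satisfy $\lambda(f)=1$, yet their powers $f_*^n$ form an infinite subset of $\Isom(\NS(X_\R;\Z))$. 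So the reduction ``$\lambda(f_n)=1 \Rightarrow (f_n)^*$ ranges over a finite set'' does not hold, and the subsequent step extracting $(f_n f_m^{-1})^* = \id$ breaks down. Note also that your first step leans on ``continuity of entropy,'' but the ambient topology here is $C^1$, where Yomdin's upper semicontinuity is unavailable; your parenthetical fix via $\liap$ and the bound $\h(g)\le d\log\|{\rm D}g\|_\infty$ is the right repair and should be stated as the actual argument rather than a remark.

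The paper avoids the parabolic issue entirely by not passing through entropy. It fixes one $q$-divisible very ample $D_0$ and shows the set $\Gamma=\{f\in\Aut(X_\R)\,:\,\vol_\C(f_*D_0)\le M\}$ is finite: the classes $\theta\in\Nef(X_\R)$ with $\theta\cdot[\kappa]\le M$ form a compact set meeting $\NS(X_\R;\Z)$ in finitely many points, and by Lieberman--Fujiki each stabilizer $\{f : f_*[D_0]=[D_0]\}$ is finite once $\Aut(X_\C)_0=\{\id\}$. This finiteness holds regardless of whether parabolic automorphisms exist, because even for parabolic $f$ the quantity $\vol_\C(f_*^nD_0)$ grows (quadratically, by Gizatullin), so only finitely many iterates land in $\Gamma$. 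The discreteness then follows directly: if $f_\R$ is $C^1$-close to $\id$, then $\mvol_\R(f_*D_0)\le\|{\rm D}f_\R\|_\infty\mvol_\R(D_0)$ is small, and the inequality $\mvol_\R(f_*D_0)\ge C\vol_\C(f_*D_0)^\alpha$ forces $\vol_\C(f_*D_0)\le M$, hence $f\in\Gamma$, which being finite has $\id$ isolated. This is both shorter and more robust than the spectral-radius route. If you want to salvage your approach, you would need to replace ``spectral radius $1$ implies finite order'' with a direct argument that $(f_n)_\R\to\id$ forces $(f_n)^*$ to stabilize $[D_0]$ for large $n$ --- which is essentially what the paper's $\Gamma$-argument accomplishes.
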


\begin{proof}
Fix $\alpha>0$ such that $\alpha\in\mathcal{A}(X)$. There are positive numbers $q$ and $C$ such that any real ample divisor $D$ whose class is $q$-divisible satisfies 
\begin{equation}\label{truc}
\mvol_\R(D)\geq C\vol_\C(D)^\alpha.
\end{equation}
Let $D_0$ be such a divisor and let $M>1$ be such that  $CM^\alpha>\mvol_\R(D_0)$ (in particular, $\vol_\C(D_0)<M$).

\begin{lemma}\label{lemm-discret}
The set $\Gamma=\{f\in\Aut(X_\R)\,|\,\vol_\C(f_*D_0)\leq M\}$ is finite.
\end{lemma}

\begin{proof}[Proof of Lemma \ref{lemm-discret}]
Denote by $\Theta\subset\NS(X_\R;\Z)$ the set of classes of ample divisors~$D$ that satisfy $\vol_\C(D)\leq M$. This set is finite because such classes are in the compact set $\{\theta\in\Nef(X_\R)\,|\,\theta\cdot[\kappa]\leq M\}$, where $\kappa$ denotes the K\"ahler form on $X$.

By \cite[2.2]{lieberman} or \cite[4.8]{fujiki} the subgroup $\{f\in\Aut(X_\R)\,|\,f_*[D_0]=[D_0]\}$ has finitely many connected components, so in our case it is finite. It follows that the set $\Gamma=\{f\in\Aut(X_\R)\,|\,f_*[D_0]\in\Theta\}$ is finite.
\end{proof}

As $\Gamma$ is finite and $\frac{CM^\alpha}{\mvol_\R(D_0)}>1=\|{\rm D}\id_{X(\R)}\|_\infty$, we can find a neighborhood $U$ of $\id_{X(\R)}$ in $\Diff^1(X(\R))$ such that
\begin{itemize}
\item[(1)] $U\cap\Gamma=\{\id_{X(\R)}\}$;
\item[(2)] for all $g\in U$, $\|{\rm D}g\|_\infty<\frac{CM^\alpha}{\mvol_\R(D_0)}$.
\end{itemize}
Let $f$ be a real automorphism of $X$ such that the restricted map $f_\R:X(\R)\to X(\R)$ is in $U$. Since the length of a curve is at most multiplied by $\|{\rm D}f_\R\|_\infty$ when we take its image by $f_\R$, we obtain
\begin{equation}
\mvol_\R(f_*D_0) \leq \|{\rm D}f_\R\|_\infty \mvol_\R(D_0) < CM^\alpha.
\end{equation}
On the other hand, $\mvol_\R(f_*D_0)\geq C\vol_\C(f_*D_0)^\alpha$, so we get $\vol_\C(f_*D_0)<M$, hence $f\in\Gamma$. Then by hypothesis on $U$, we get $f_\R=\id_{X(\R)}$. This implies that~$\Aut(X_\R)$ is a discrete subgroup of $\Diff^1(X(\R))$.
\end{proof}



\appendix

\section{Cauchy--Crofton Formula and Consequences}\label{annexe-crofton}

What is described here can be found in the manuscript \cite{christol}, except for Lemma \ref{deg-2}, the proof of which is incomplete in Christol \cite{christol}.

There is a classical way, in integral geometry (see \cite{santalo}), to compute the volume of a $k$-dimensional submanifold $N$ of $\P^d(\R)$, just by taking the mean of the number of intersections between $N$ and $k$-codimensional projective subspaces. In order to make it work, we choose both a metric on $\P^d(\R)$, and a probability measure on the Grassmannian $\mathbf{G}(d-k,d)$ (i.e., the real algebraic variety of $(d-k)$-dimensional projective subspaces of $\P^d(\R)$), which are invariant under the action of the orthogonal group ${\rm O}(d+1)$. Namely, we set the Fubini--Study metric on $\P^d(\R)$, and the probability $\mu_{d-k,d}$ on $\mathbf{G}(d-k,d)$ induced by the Haar measure on ${\rm O}(d+1)$ (the Grassmannian is homogeneous with respect to this group). Now we can state the formula.

\begin{theorem}[Cauchy--Crofton formula]
Let $N$ be a $k$-dimensional submanifold of $\P^d(\R)$. With respect to the Fubini--Study metric,
\begin{equation}\label{formule-crofton}
\vol(N)=\vol(\P^k(\R))\int_{\Pi\in\mathbf{G}(d-k,d)}\sharp(N\cap \Pi)\,{\rm d}\mu_{d-k,d}(\Pi).
\end{equation}
\end{theorem}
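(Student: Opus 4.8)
The plan is to deduce the formula from the uniqueness of the ${\rm O}(d+1)$-invariant measure on $\mathbf{G}(d-k,d)$, via the incidence variety and the area formula; I may assume $N$ compact (the general case follows by exhaustion). Set
\[
\mathcal{I}=\{(x,\Pi)\in N\times\mathbf{G}(d-k,d)\ :\ x\in\Pi\}.
\]
The first projection $p_1:\mathcal{I}\to N$ is a fiber bundle whose fiber over $x$ is the Grassmannian $\mathbf{G}_x$ of $(d-k)$-dimensional projective subspaces of $\mathbf{P}^d(\mathbf{R})$ passing through $x$; the stabilizer of $x$ in ${\rm O}(d+1)$ acts transitively on $\mathbf{G}_x$ (through its action on directions at $x$), so $\mathbf{G}_x$ carries a canonical invariant probability measure $\nu_x$, and I equip $\mathcal{I}$ with the measure that is locally the product of $dV_N$ and $\nu_x$. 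The second projection $p_2:\mathcal{I}\to\mathbf{G}(d-k,d)$ is then a map between manifolds of the same dimension $k(d-k+1)$, with fiber over $\Pi$ in bijection with $N\cap\Pi$.

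A preliminary point is that the right-hand side makes sense: by Sard's theorem, applied to the restriction to $N$ of the family of $(d-k)$-planes, $\mu_{d-k,d}$-almost every $\Pi$ is transverse to $N$, hence meets the compact manifold $N$ in a finite set, and $\Pi\mapsto\sharp(N\cap\Pi)$ is measurable by the standard argument for proper maps. The area formula (change of variables with multiplicity) applied to $p_2$ then expresses $\int_{\mathbf{G}(d-k,d)}\sharp(N\cap\Pi)\,d\widetilde\mu(\Pi)$ as $\int_{\mathcal{I}}|J_{p_2}|$, where $\widetilde\mu$ is the Riemannian volume on $\mathbf{G}(d-k,d)$; since $\widetilde\mu$ and $\mu_{d-k,d}$ are both ${\rm O}(d+1)$-invariant they are proportional, so it is enough to compute the Crofton integral against $\mu_{d-k,d}$ up to one universal constant. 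Applying Fubini along the bundle $p_1$ rewrites $\int_{\mathcal{I}}|J_{p_2}|$ as $\int_N\rho(x)\,dV_N(x)$ with $\rho(x)=\int_{\mathbf{G}_x}|J_{p_2}(x,\Pi)|\,d\nu_x(\Pi)$.

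The core of the proof is that $\rho(x)$ depends neither on $x$ nor on the tangent $k$-plane $T_xN$. This is the homogeneity of Fubini--Study geometry: ${\rm O}(d+1)$ acts transitively on pairs consisting of a point of $\mathbf{P}^d(\mathbf{R})$ and a $k$-dimensional subspace of its tangent space, because the point stabilizer acts as ${\rm O}(d)$ on tangent directions and ${\rm O}(d)$ is transitive on $k$-planes. All the data entering $\rho$ ($\nu_x$, the Riemannian structures, hence $J_{p_2}$) are equivariant, so $\rho(x)\equiv c_{k,d}$ is a universal constant, and therefore $\int_{\mathbf{G}(d-k,d)}\sharp(N\cap\Pi)\,d\mu_{d-k,d}(\Pi)=c_{k,d}\,\vol(N)$ for every compact $k$-submanifold $N$. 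Finally I would pin down $c_{k,d}$ by taking $N=\mathbf{P}^k(\mathbf{R})$, totally geodesically embedded: a generic $(d-k)$-plane meets it transversally in exactly one point (projective subspaces of complementary dimension always meet, generically in dimension $0$), so the left-hand side equals $\mu_{d-k,d}(\mathbf{G}(d-k,d))=1$; hence $c_{k,d}=1/\vol(\mathbf{P}^k(\mathbf{R}))$, which is the asserted formula.

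I expect the main difficulty to be the bookkeeping in the two applications of the area formula, namely identifying the Jacobian $J_{p_2}$ and the measure on $\mathbf{G}(d-k,d)$ it produces, and above all checking cleanly that the fiber-averaged Jacobian $\rho(x)$ is insensitive to $T_xN$; the transversality, finiteness and measurability of $\sharp(N\cap\Pi)$, and the one-point intersection count on $\mathbf{P}^k(\mathbf{R})$, are routine once the invariant measures are in place.
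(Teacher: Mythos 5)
Your argument is correct, but takes a genuinely different route from the paper. The paper's proof is the classical approximation scheme: check the formula when $N$ is a $k$-simplex, where it reduces by additivity and ${\rm O}(d+1)$-invariance to the single-plane case, and then approximate an arbitrary $k$-submanifold by polyhedral complexes, citing Santal\'o for the Euclidean analogue. Your proof is instead the incidence-variety/coarea argument: push the fibered measure $dV_N\otimes\nu_x$ on $\mathcal{I}$ forward through $p_2$, identify the resulting density on $\mathbf{G}(d-k,d)$ (equivalently, Fubini along $p_1$ after the area formula) as an average Jacobian $\rho(x)$, deduce from the transitivity of ${\rm O}(d+1)$ on pairs $(x,T_xN)$ that $\rho$ is a universal constant $c_{k,d}$, and pin down $c_{k,d}$ by taking $N=\P^k(\R)$, where the Crofton integral equals $1$. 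Both approaches are standard. The approximation route is more elementary and sidesteps Jacobian bookkeeping, but the passage from simplices to smooth submanifolds requires a (routine but real) limiting argument that the paper leaves implicit. Your invariance route is conceptually cleaner, works verbatim on any two-point homogeneous space, and makes the appearance of the constant $1/\vol(\P^k(\R))$ transparent rather than a separate computation; its cost is the area-formula machinery you correctly flag, in particular verifying that the averaged Jacobian $\rho$ is a well-defined invariant of the ${\rm O}(d+1)$-orbit of the pair $(x,T_xN)$, which is the one step that needs to be written out with care.
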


It is enough to check the formula when $N$ is a $k$-simplex, and then to approach an arbitrary submanifold by such simplices. A similar formula in the Euclidean context can be found in \cite[p. 245 (14.70)]{santalo}.

\begin{corollary}\label{1.11}
Let $Y$ be a real $k$-dimensional algebraic subvariety of $\P^d_{\R}$. With respect to the Fubini--Study metric, 
\begin{equation}\label{vol-reel}
\vol_\R(Y)\leq\deg(Y)\vol_\R(\P^k),
\end{equation}
with equality if and only if $Y$ is a union of $\deg(Y)$ real projective subspaces.
\end{corollary}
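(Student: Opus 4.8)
The plan is to derive both the inequality and the equality case from the Cauchy--Crofton formula (\ref{formule-crofton}) combined with Bézout's theorem. First I would discard the singular locus: $Y_{\rm sing}(\R)$ has dimension $<k$, hence zero $k$-dimensional volume, so $\vol_\R(Y)$ is the volume of the smooth submanifold $Y_{\rm reg}(\R)$, to which (\ref{formule-crofton}) applies. For a generic real $(d-k)$-plane $\Pi$, Bertini's theorem guarantees that $\Pi$ avoids $Y_{\rm sing}$ and meets $Y(\C)$ transversally, so by Bézout the set $Y(\C)\cap\Pi(\C)$ consists of exactly $\deg(Y)$ distinct points; complex conjugation permutes these points and the real ones are exactly its fixed points, whence $\sharp\big(Y(\R)\cap\Pi(\R)\big)\le\deg(Y)$. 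Substituting this into (\ref{formule-crofton}) and integrating gives $\vol_\R(Y)\le\deg(Y)\,\vol_\R(\P^k)$, which is (\ref{vol-reel}).

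For the equality statement, the computation above shows that equality in (\ref{vol-reel}) holds precisely when $\sharp\big(Y(\R)\cap\Pi(\R)\big)=\deg(Y)$ for $\mu_{d-k,d}$-almost every $\Pi$, i.e.\ when for almost every real $(d-k)$-plane $\Pi$ every point of $Y(\C)\cap\Pi(\C)$ is real; call this property $(\star)$. The implication ``$Y$ is a union of $\deg(Y)$ real $k$-planes $\Rightarrow$ equality'' is immediate, since a real $k$-plane inherits the Fubini--Study metric of $\P^k$ and has degree $1$, and both $\vol_\R$ and $\deg$ add over a union of linear subspaces meeting in lower dimension. For the converse I would first reduce to $Y$ irreducible: writing $Y$ as the union of its $k$-dimensional irreducible components $Y_i$ (components of smaller dimension contribute nothing to either side), volumes and degrees are additive, a generic real $\Pi$ distributes its intersection with $Y(\C)$ among the $Y_i(\C)$, and therefore $(\star)$ holds for $Y$ iff it holds for each $Y_i$; so it suffices to show that an irreducible $Y$ satisfying $(\star)$ is a real $k$-plane.

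This last point I would prove by descending induction on $m=\deg(Y)$. If $m=1$ then $Y$ is a $k$-plane which, being defined over $\R$, is a real $k$-plane. If $m\ge2$, a double application of Fubini to the incidence set $\{(P,\Pi):P\in Y(\R)\cap\Pi(\R)\}$ (disintegrating first in the variable $\Pi$, then in the variable $P$) produces a smooth real point $P\in Y(\R)$ such that almost every real $(d-k)$-plane through $P$ still meets $Y(\C)$ only in real points. Projecting from $P$ yields an irreducible real $k$-dimensional variety $Y'=\pi_P(Y)\subset\P^{d-1}_\R$ of degree $m-1$, and the bijection between real $(d-k)$-planes through $P$ in $\P^d$ and real $(d-1-k)$-planes in $\P^{d-1}$ shows that $Y'$ again satisfies $(\star)$. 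Iterating, one reaches an irreducible real variety of dimension $k$ and degree $2$; after projecting onto its linear span this is a quadric hypersurface in $\P^{k+1}_\R$, which by Lemma \ref{deg-2} satisfies $\vol_\R<2\,\vol_\R(\P^k)$, hence (via (\ref{formule-crofton}) once more) fails $(\star)$ --- a contradiction. Therefore $m=1$, and the corollary follows.

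The hard part is the equality case, and inside it the base step isolated as Lemma \ref{deg-2}: an irreducible real quadric $Q\subset\P^{k+1}$ cannot satisfy $(\star)$. This I expect to settle through the classification of real quadratic forms --- either $Q(\R)=\emptyset$, so $\vol_\R(Q)=0$, or the defining form has rank $\ge3$ and is indefinite, in which case it restricts to a definite form on a positive-measure family of real $2$-planes, producing a positive-measure family of real lines disjoint from $Q(\R)$; in both cases the mean number of real intersection points is strictly less than $2$. (This is precisely the lemma whose proof in \cite{christol} is incomplete.) A secondary, purely technical, obstacle is the measure-theoretic bookkeeping: passing from ``almost every $\Pi$'' to ``almost every $\Pi$ through a suitably generic $P$'', and checking that intersecting with the linear span of a projected variety carries the generic real $(d-k)$-plane to a generic real line, so that Cauchy--Crofton may legitimately be reapplied at each stage of the induction.
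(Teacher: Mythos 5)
The inequality half is fine and matches the paper's use of Cauchy--Crofton (the paper simply observes $\sharp(Y(\R)\cap\Pi)\leq\deg(Y)$, with no need for Bertini or transversality). The problem is in the equality case.

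Your descending induction on $m=\deg(Y)$ via projection from a smooth real point $P\in Y(\R)$ cannot, in general, reach degree $2$. Projecting from a point of $Y$ lowers both the degree (by $1$) and the codimension (by $1$); the step is only meaningful while $\operatorname{codim} Y\geq 2$, because once $Y$ is a hypersurface in its span $\P^{k+1}$, the projection from $P\in Y$ is dominant onto $\P^{k}$ and no longer yields a $k$-dimensional variety of lower degree. Now a nondegenerate irreducible $Y\subset\P^d$ of dimension $k$ satisfies $\deg(Y)\geq d-k+1$, with equality precisely for varieties of minimal degree; after the $d-k-1$ admissible projections the resulting hypersurface has degree $\deg(Y)-(d-k-1)\geq 2$, and this is $>2$ unless $Y$ had minimal degree. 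So for a generic $Y$ the chain stalls at a hypersurface of degree $\geq 3$, which your quadric base case does not touch. You would need a separate argument for hypersurfaces of all degrees $\geq 2$ satisfying $(\star)$, and at that point the induction no longer saves any work. Relatedly, you have misread Lemma~\ref{deg-2}: in the paper it is not a base case about quadrics but the general statement --- for any geometrically irreducible $Y$ with $\deg(Y)>1$ there is a real $(d-k)$-plane meeting $Y$ in at most $\deg(Y)-2$ real points counted with multiplicity --- and it is proved directly, by cutting with a generic $(d-k+1)$-plane $L$ (Bertini gives an irreducible curve $C=Y\cap L$ of degree $\deg(Y)$), picking a non-real conjugate pair on $C(\C)$, and choosing a real hyperplane of $L$ through that pair. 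This lemma carries the entire equality argument; there is no induction and no quadric reduction in the paper's proof.

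Two secondary points. First, after reducing to components irreducible over $\R$, you still have to dispose of the case $Y=Z\cup\sigma(Z)$ with $Z$ a non-real complex component: there $(\star)$ does not even make sense componentwise, but one simply notes $\vol_\R(Y)=0<\deg(Y)\vol_\R(\P^k)$ because $Y(\R)\subset Z\cap\sigma(Z)$ has dimension $<k$; the paper does this explicitly and you should too. Second, your sketch of the quadric case ("restricts to a definite form on a positive-measure family of real $2$-planes'') is not quite right for the Lorentzian signature $(1,k+1)$: on $P^\perp$ for a timelike $P$ the form is negative definite, so you cannot find a definite $2$-plane through $P$ in the positive cone. The missing real lines are the spacelike ones, obtained from two negative-definite directions whose $b$-pairing is small. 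This is fixable, but since the quadric case is not actually the bottleneck of the paper's argument, the fix does not rescue the overall strategy.
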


\begin{proof}[Proof of Corollary \ref{1.11}]
Observe that $\sharp(Y(\R)\cap\Pi)\leq\deg(Y)$ for all $\Pi\in\mathbf{G}(d-k,d)$. Thus we get Inequality (\ref{vol-reel})  using the Cauchy--Crofton formula.

The equality is obviously achieved when $Y$ is the union of $\deg(Y)$ projective subspaces. Now we prove that this condition is necessary.

\begin{lemma}\label{deg-2}
Let $Y$ be a geometrically irreducible real $k$-dimensional algebraic subvariety of $\P^d_{\R}$. If $\deg(Y)>1$, then there exists a real projective $k$-codimensional subspace $\Pi$ such that the number of real points of $Y\cap\Pi$, counted with multiplicities, is no more than $\deg(Y)-2$.
\end{lemma}

\begin{proof}
Observe that the assumptions imply $0<k<n$. By Bertini's theorem (see \cite[3.3.1]{lazarsfeld}), there exists a real projective subspace $L$ of dimension ${d-k+1\geq 2}$, such that the curve $C=Y\cap L$ is irreducible over $\C$ and $\deg(C)=\deg(Y)$.

First we suppose that there is no hyperplane of $L$ containing the curve $C$. We choose two distinct complex conjugate points $P$ and $\b P$ on the curve $C(\C)$, and a real hyperplane $\Pi$ of $L$ such that $\Pi(\C)$ contains these two points. As $C$ is irreducible and not contained in $\Pi$, the intersection $C\cap\Pi=Y\cap \Pi$ is a finite number of points, including the complex points $P$ and $\b P$. The number of complex points of this intersection, counted with multiplicities, is exactly $\deg(Y)$, and at least two complex points are not real. The result follows.

Otherwise, let $L'\varsubsetneq L$ be the minimal projective subspace that contains the curve~$C$. As $\deg(C)>1$, it follows that $\dim(L')\geq 2$. By the first step, we can choose a hyperplane $\Pi'\subset L'$ such that $\sharp(C\cap\Pi')(\R)\leq\delta-2$. Then we take any hyperplane~$\Pi$ of $L$ containing $\Pi'$ and not $L'$, and we are done.
\end{proof}

Let us go back to the proof of the case of equality. Let $Y$ be a real $k$-dimensional subvariety of $\P^d$ that is not the union of real projective subspaces. We may assume that $Y$ is irreducible over $\R$. If it is not geometrically irreducible, then~${Y=Z\cup\sigma(Z)}$, with $Z$ a complex subvariety that is not real, hence ${\vol_\R(Y)=0 <\deg(Y)\vol_\R(\P^k)}$. Otherwise we can deduce from Lemma \ref{deg-2} that there exists a real hyperplane $\Pi$ such that $\sharp (Y\cap\Pi)(\R)\leq\deg(Y)-2$ (with multiplicities). This inequality remains satisfied on a neighborhood of $\Pi$ in the Grassmannian ${\mathbf{G}(d-k,d)}$. Such a neighborhood has a positive probability for $\mu_{d-k,k}$, so the Cauchy--Crofton formula implies~${\vol_\R(Y)<\deg(Y)\vol_\R(\P^k)}$.
\end{proof}



\bibliographystyle{abbrv}
\bibliography{biblio}

\end{document}